\documentclass[11pt]{amsart}

\DeclareMathOperator{\Lie}{Lie}
\DeclareMathOperator{\Rad}{Rad}
\DeclareMathOperator{\ad}{ad}
\setlength{\topmargin}{0in}
\setlength{\textheight}{8.6in}
\setlength{\oddsidemargin}{0in}
\setlength{\evensidemargin}{0in}
\setlength{\textwidth}{6.5in}

\usepackage{amsmath,amssymb,amsthm,graphics,amscd}
\usepackage{longtable}
\usepackage{cite,xypic}
\usepackage{color}
\usepackage{graphicx}
\usepackage{epsf}
\usepackage{fancyhdr,hyperref}
\usepackage{pdflscape}
\setlength{\unitlength}{1mm}

\hypersetup{backref,
colorlinks=true,backref}


\setcounter{tocdepth}{1}
\setcounter{secnumdepth}{3}

  \renewenvironment{thebibliography}[1]{
    \begin{oldthebibliography}{#1}
      \setlength{\parskip}{0ex}
      \setlength{\itemsep}{0ex}
  }
  {
    \end{oldthebibliography}
  }
  
\setlongtables
\begin{document}

\newcounter{rownum}
\setcounter{rownum}{0}
\newcommand{\ab}{\addtocounter{rownum}{1}\arabic{rownum}}
\newcommand{\im}{\mathrm{im}}
\newcommand{\rk}{\mathrm{rk}}
\newcommand{\x}{$\times$}
\newcommand{\bb}{\mathbf}

\newcommand{\Ind}{\mathrm{Ind}}
\newcommand{\Char}{\mathrm{char}}
\newcommand{\hra}{\hookrightarrow}
\newtheorem{lemma}{Lemma}[section]
\newtheorem{theorem}[lemma]{Theorem}
\newtheorem*{TA}{Theorem A}
\newtheorem*{TB}{Theorem B}
\newtheorem*{TC}{Theorem C}
\newtheorem*{CorC}{Corollary C}
\newtheorem*{TD}{Theorem D}
\newtheorem*{TE}{Theorem E}
\newtheorem*{PF}{Proposition E}
\newtheorem*{C3}{Corollary 3}
\newtheorem*{T4}{Theorem 4}
\newtheorem*{C5}{Corollary 5}
\newtheorem*{C6}{Corollary 6}
\newtheorem*{C7}{Corollary 7}
\newtheorem*{C8}{Corollary 8}
\newtheorem*{claim}{Claim}
\newtheorem{cor}[lemma]{Corollary}
\newtheorem{conjecture}[lemma]{Conjecture}
\newtheorem{prop}[lemma]{Proposition}
\newtheorem{question}[lemma]{Question}
\theoremstyle{definition}
\newtheorem{example}[lemma]{Example}
\newtheorem{examples}[lemma]{Examples}
\theoremstyle{remark}
\newtheorem{remark}[lemma]{Remark}
\newtheorem{remarks}[lemma]{Remarks}
\newtheorem{obs}[lemma]{Observation}
\theoremstyle{definition}
\newtheorem{defn}[lemma]{Definition}

  \def\hal{\unskip\nobreak\hfil\penalty50\hskip10pt\hbox{}\nobreak
  \hfill\vrule height 5pt width 6pt depth 1pt\par\vskip 2mm}

\renewcommand{\labelenumi}{(\roman{enumi})}
\newcommand{\Hom}{\mathrm{Hom}}
\newcommand{\Int}{\mathrm{int}}
\newcommand{\Ext}{\mathrm{Ext}}
\newcommand{\opH}{\mathrm{H}}
\newcommand{\soc}{\mathrm{Soc}}
\newcommand{\SO}{\mathrm{SO}}
\newcommand{\la}{\langle}
\newcommand{\ra}{\rangle}
\newcommand{\Sp}{\mathrm{Sp}}
\newcommand{\SL}{\mathrm{SL}}
\newcommand{\GL}{\mathrm{GL}}
\newcommand{\diag}{\mathrm{diag}}
\newcommand{\End}{\mathrm{End}}
\newcommand{\tr}{\mathrm{tr}}
\newcommand{\Stab}{\mathrm{Stab}}
\newcommand{\red}{\mathrm{red}}
\newcommand{\Aut}{\mathrm{Aut}}
\renewcommand{\H}{\mathcal{H}}
\renewcommand{\u}{\mathfrak{u}}
\newcommand{\Ad}{\mathrm{Ad}}
\newcommand{\N}{\mathcal{N}}
\newcommand{\NN}{\mathbb{N}}
\newcommand{\Z}{\mathbb{Z}}
\newcommand{\gl}{\mathfrak{gl}}
\newcommand{\g}{\mathfrak{g}}
\newcommand{\F}{\mathbb{F}}
\newcommand{\m}{\mathfrak{m}}
\renewcommand{\b}{\mathfrak{b}}
\newcommand{\p}{\mathfrak{p}}
\newcommand{\q}{\mathfrak{q}}
\renewcommand{\l}{\mathfrak{l}}
\newcommand{\h}{\mathfrak{h}}
\renewcommand{\t}{\mathfrak{t}}
\renewcommand{\k}{\mathfrak{k}}
\renewcommand{\c}{\mathfrak{c}}
\renewcommand{\r}{\mathfrak{r}}
\newcommand{\n}{\mathfrak{n}}
\newcommand{\s}{\mathfrak{s}}
\newcommand{\z}{\mathfrak{z}}
\newcommand{\pso}{\mathfrak{pso}}
\newcommand{\so}{\mathfrak{so}}
\renewcommand{\sl}{\mathfrak{sl}}
\newcommand{\psl}{\mathfrak{psl}}
\renewcommand{\sp}{\mathfrak{sp}}
\newcommand{\Ga}{\mathbb{G}_a}
\newcommand{\Gm}{\mathbb{G}_m}

\newenvironment{changemargin}[1]{%
  \begin{list}{}{%
    \setlength{\topsep}{0pt}%
    \setlength{\topmargin}{#1}%
    \setlength{\listparindent}{\parindent}%
    \setlength{\itemindent}{\parindent}%
    \setlength{\parsep}{\parskip}%
  }%
  \item[]}{\end{list}}

\parindent=0pt
\addtolength{\parskip}{0.5\baselineskip}

\subjclass[2010]{17B45}
\title{On the smoothness of normalisers, the subalgebra structure of modular Lie algebras and the cohomology of small representations}
\author{Sebastian Herpel}
\address{Ruhr Universit\"at Bochum \\ Bochum, Germany} 
\email{sebastian.herpel@rub.de {\text{\rm(Herpel)}}}

\author{David I. Stewart}
\address{University of Cambridge\\ Cambridge, UK} \email{dis20@cam.ac.uk {\text{\rm(Stewart)}}}
\pagestyle{plain}
\begin{abstract}
We provide results on the smoothness of normalisers in connected reductive algebraic groups $G$ over fields $k$ of positive characteristic $p$. Specifically we we give bounds on $p$ which guarantee that normalisers of subalgebras of $\g$ in $G$ are smooth, i.e.\ so that the Lie algebras of these normalisers 
coincide with the infinitesimal normalisers.

One of our main tools is to exploit cohomology vanishing of small dimensional
modules. Along the way, we obtain complete reducibility results for small
dimensional modules in the spirit of similar results due to Jantzen, Guralnick, Serre and Bendel--Nakano--Pillen.
\end{abstract}
\maketitle
{\small \tableofcontents}
\section{Introduction}
Let $G$ be an affine group scheme over an algebraically closed field $k$. 
We say $G$ is smooth if $\dim\Lie(G)=\dim G$.
A famous theorem of Cartier states that every affine group over a field of characteristic zero is smooth. 
Therefore, in this situation, the category of smooth group schemes is closed under the 
scheme-theoretic constructions of taking centres, centralisers, normalisers and transporters.
However, Cartier's theorem fails rather comprehensively in positive characteristic.
A classic example of a non-smooth algebraic group is the group scheme $\mu_p$ whose points are the $p$th roots of unity;
this is not smooth over a field of characteristic $p$---its Lie algebra is $1$-dimensional, but its $k$-points consist just of the identity element.
Furthermore, since $\mu_p$ is also the scheme-theoretic centre of $\SL_p$,
the centre of this reductive\footnote{We call a smooth algebraic group $G$ reductive provided that $R_u(G^0)=1$.} group is also not smooth over a field of characteristic $p$.
This means that the group-theoretic centre of $\SL_p$ misses important
infinitesimal information about the centre (for instance, the fact that $\SL_p$ is
not adjoint).

Nonetheless, centralisers are usually smooth. For example, it is a critical result of Richardson \cite[Lem.~6.6]{Ric67}, used extensively in the theory of nilpotent orbits, that the centraliser $G_e=C_G(e)$ of an element $e$ of $\g=\Lie(G)$ is smooth whenever $p$ is a very good prime for $G$.\footnote{Recall that $p$ is good if the following holds: $p$ is not $2$ if $G$ contains a factor not of type $A$, $p$ is not $3$ if $G$ contains an exceptional factor and $p$ is not $5$ if $G$ contains a factor of type $E_8$. The prime $p$ is very good if it is good and it does not divide $n+1$ for any factor of $G$ of type $A_n$.}  (Note that smoothness of the centraliser, or what is the same, the separability of the orbit map $G\to G\cdot e$ can be restated as $\Lie(G_e(k))=\c_\g(e)$.) In fact the centralisers of subgroup schemes of a connected reductive group $G$ are usually smooth:
work of Bate--Martin--R\"ohrle--Tange and the first author (cf.\ Proposition \ref{prop:pretty})
gives precise information on the characteristic $p$ of $k$,
depending on the root datum of $G$, for centralisers of all subgroup schemes of $G$ to be smooth.
It suffices, for instance, for $p$ to be very good for $G$.
Furthermore, centralisers of all subgroup schemes of $\GL_n$ are smooth.

The situation for normalisers is much less straightforward, which may explain why results in this direction have been unforthcoming until now.
For example, even when $G=\GL_n$, for any $n\geq 3$ and any $p>0$ an arbitrary prime, there are connected smooth subgroups of $G$ with non-smooth normalisers (see Lemma \ref{example:smoothUnipotent} below).
In light of this situation, perhaps it is surprising that there are any general situations in which normalisers of subgroup schemes are smooth. However, we prove that for sufficiently large $p$ depending on 
the connected reductive algebraic group $G$,
(a) all normalisers of height one subgroup schemes (in fact the normalisers of all subspaces of the Lie algebra of $G$); and (b) all normalisers of connected reductive subgroups are indeed smooth. Theorem \ref{thmreductivenorms} makes (b) precise and the proof is a straightforward reduction to the case of centralisers. Our main result follows.

\begin{TA}
There exists a constant 
$c=c(r)$ such that if $p>c$ and $G$ is any connected reductive group of rank $r$ then all normalisers $N_G(\h)$ of all subspaces $\h$ of $\g$ are smooth.

More precisely, let $d$ be the dimension of a minimal faithful
representation of $G$.
Then all normalisers of subspaces of $\g$ are smooth provided that
$p > 2^{2d}$.
In particular, if $G=\GL_n$ we may take $p > 2^{2n}$.
\end{TA}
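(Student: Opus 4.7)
The plan is to recast smoothness of $N_G(\h)$ as smoothness of a line-stabiliser in a $G$-module. For $m=\dim\h$, the scheme $N_G(\h)$ is the stabiliser of $\h$ for the natural $G$-action on $\mathrm{Gr}(m,\g)$; its smoothness is equivalent to separability of the orbit map $G\to G\cdot\h$, i.e.\ to the equality $\dim G-\dim N_G(\h)=\dim\g-\dim\n_\g(\h)$. The Plücker embedding $\mathrm{Gr}(m,\g)\hookrightarrow\mathbb{P}(\wedge^m\g)$ identifies $N_G(\h)$ with the stabiliser $\Stab_G([v_\h])$ of the line through the decomposable vector $v_\h\in\wedge^m\g$, so the theorem reduces to smoothness of stabilisers of lines in the $G$-modules $\wedge^m\g$ for every $1\le m\le\dim\g$.

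\textbf{Line stabilisers as centralisers.} By considering the character by which $\Stab_G([v_\h])$ acts on its fixed line, one reduces in the usual way to smoothness of the pointwise stabiliser $\Stab_G(v_\h)$. The latter is the centraliser in $G$ of the height-1 infinitesimal subgroup $\alpha_{v_\h}\cong\alpha_p\subseteq V_a$ corresponding to $v_\h$, where $V_a$ denotes the additive group of $V=\wedge^m\g$ viewed inside the semidirect product $G\ltimes V_a$. This places the question parallel to the centraliser-smoothness result in Proposition \ref{prop:pretty}, with the essential novelty that the ambient group $G\ltimes V_a$ is not reductive and the height-1 subgroup lies in its unipotent radical, so that result cannot be cited directly.

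\textbf{Small modules and the bound $2^{2d}$.} The obstructions to smoothness of $\Stab_G(v_\h)$ are cohomological, controlled by $H^1(G,V)$ and related $\Ext$-groups, and the strategy is to force them to vanish by invoking the complete-reducibility and cohomology-vanishing theorems for small-dimensional $G$-modules -- in the spirit of Jantzen, Guralnick, Serre and Bendel--Nakano--Pillen and refined in the body of the paper. The bound $p>2^{2d}$ is pinned down by the embedding $G\hookrightarrow\GL_d$: the exterior algebra $\wedge^\bullet V\otimes\wedge^\bullet V^*$ of the minimal faithful representation and its dual has total dimension $(2^d)^2=2^{2d}$, and this is the module in which the relevant $G$-modules can be controlled. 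The hardest step will be bridging the abstract reduction and this numerical bound, since $\dim\wedge^m\g$ can far exceed $2^{2d}$; the natural remedy is to replace $\wedge^m\g$ by the cyclic $G$-submodule $\la G\cdot v_\h\ra$ generated by the Plücker vector and to bound the dimensions of its composition factors by $2^{2d}$ via the $\wedge^\bullet V\otimes\wedge^\bullet V^*$-embedding. Once that is in place, the small-module theorems furnish the required $H^1$-vanishing, smoothness of the pointwise stabiliser follows, and the chain of reductions delivers smoothness of $N_G(\h)$ for every subspace $\h\subseteq\g$ under $p>2^{2d}$; for $G=\GL_n$ one has $d=n$, giving $p>2^{2n}$.
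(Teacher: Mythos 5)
Your proposal takes a genuinely different route from the paper, but it has a fatal gap at precisely the step you flag as the ``hardest''. The paper's actual proof is not cohomological at all: for $G=\GL_n$ it uses the Jordan decomposition of $\n_\g(\h)$, exponentiates the nilpotent generators via Proposition~\ref{prop:expad}, and handles the toral part by an explicit linear--algebra construction (Lemma~\ref{findingtorus}): one encodes the condition that a diagonal element normalise $\h$ as an integer linear system $R\mathbf c=0$, compares the nullity of $R$ over $\Z$ with its nullity mod~$p$, and bounds the elementary divisors of $R$ by $2^{2n}$ via a determinant estimate. The bound $p>2^{2d}$ has nothing to do with exterior algebras; it is the size needed to rule out a bad prime dividing an $r\times r$ minor of $R$. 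The general reductive case then descends from $\GL_d$ via reductive pairs (Lemma~\ref{lem:redpair} and Proposition~\ref{redpairnorm}).

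The concrete gap in your argument is the claim that the composition factors of the cyclic submodule $\langle G\cdot v_\h\rangle\subseteq\wedge^m\g$ are ``bounded by $2^{2d}$ via the $\wedge^\bullet V\otimes\wedge^\bullet V^*$-embedding''. No such embedding exists: with $\g=\gl(V)=V\otimes V^*$, the module $\wedge^m(V\otimes V^*)$ has a Cauchy filtration whose layers are products of Schur functors $\mathbb{S}_\lambda V\otimes\mathbb{S}_\mu V^*$; these involve symmetric-type functors, not just exterior powers, and their dimensions grow polynomially of high degree in $d$, easily exceeding $2^{2d}$ once $m$ is moderately large. The coincidence that $\dim(\wedge^\bullet V\otimes\wedge^\bullet V^*)=2^{2d}$ matches the stated bound is notational, not structural. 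Since the small-module theorems in the paper (Theorem~D and its sources) require the module in question to have dimension at most roughly $p$, and you have no control over the dimensions of the relevant modules, the proposed $H^1$-vanishing cannot be invoked.

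Two further issues are worth noting even if the dimension bound were repaired. First, the ``usual'' reduction from line stabiliser $\Stab_G([v_\h])$ to point stabiliser $\Stab_G(v_\h)$ is not automatic in characteristic~$p$: the character by which the line stabiliser acts may have image $\mu_{p^a}$, which is non-smooth, so the two smoothness questions are not immediately equivalent. Second, the assertion that smoothness of the point stabiliser is ``controlled by $H^1(G,V)$'' needs a precise statement; the relevant obstruction for a height-one subgroup scheme sitting inside the unipotent radical of $G\ltimes V_a$ lives in infinitesimal (restricted) cohomology of $G_1$ or $\g$, not in rational cohomology of $G$, and you have not set up the comparison. As written, the proposal is a programme sketch rather than a proof, and its central quantitative step does not go through.
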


\begin{remarks} (a). Clearly, the constant $c(r)$ in the theorem may be defined as $2^{2d'}$ for
$d'$ the maximal dimension of a minimal faithful module 
of a connected reductive group of rank $r$.

(b). Note that the maximum is finite since there are only a finite number of isomorphism types of connected reductive groups of a given rank over an algebraically closed field $k$. Each of these arises by base change from a split reductive group defined over the integers, so one can consider the theorem as a statement that for a fixed group $G_\Z$, the conclusion holds for each reduction modulo $p$ of $G_\Z$, whenever $p$ is sufficiently large.
\end{remarks}

It is natural to ask if lower bounds for the constant $c$ in Theorem A exist. In \S\ref{sec:examples}, we present a menagerie of examples where smoothness of normalisers fails; in particular, in Example \ref{fibonaccimotherfucker} we give a $p$-subalgebra of $\gl_{2n+12}$ with non-smooth normaliser whenever $p|F_n$, the $n$th Fibonacci number. Since $F_n\sim1.6^n$ and infinitely many Fibonacci numbers are expected to be prime, we conclude that $c(G)$ should grow exponentially with the rank of $G$. 
In other words the bound on $p$ in the theorem is likely to be of the right order.

The obstruction to finding linear bounds for $c$ comes from the fact that one cannot, in general, lift the maximal tori of Lie-theoretic normalisers to group-theoretic normalisers. However, many interesting subalgebras of $\g$ have normalisers which are generated by nilpotent elements (such as maximal semisimple subalgebras). Adding in this extra, natural hypothesis gives rise to much better bounds. In the following theorem let $h=h(G)$ denote the Coxeter number of (the root system $\Phi$ of) $G$. If $\Phi$ is reducible, then $h$ is taken as the maximum over all components.

\begin{TB} 
\begin{enumerate}\item 
Let $G$ be a reductive algebraic group and let $d$ be as in Theorem A.
Suppose $p>d+1$. 
Then all normalisers $N_G(\h)$ of $p$-subalgebras $\h$ are smooth
whenever $\n_\g(\h)$ is generated by nilpotent elements.
More precisely, 
the conclusion holds
for normalisers generated by nilpotent elements
when $G$ is simple of classical type (that is, the root system of $G$ is of $A$--$D$ type) and $p>h+1$.

\item Let $p>2h-2$ for the connected reductive group $G$. Then the normalisers $N_G(\h)$ of all subspaces $\h$ of $\g$ are smooth whenever $\n_\g(\h)$ is generated by nilpotent elements.
\end{enumerate}
\end{TB}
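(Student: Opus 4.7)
The plan is to translate the smoothness question for $N := N_G(\h)$ into exhibiting enough $k$-points. The starting point is the tangent space calculation at the identity, which always yields $\Lie N = \n_\g(\h)$ (this is the infinitesimal normaliser). Hence smoothness of $N$ is equivalent to $\Lie(N^\circ_\red) = \n_\g(\h)$; the inclusion $\subseteq$ is automatic, and since $\Lie(N^\circ_\red)$ is a $k$-subspace of $\n_\g(\h)$, the nilpotent-generation hypothesis reduces the problem to showing that every nilpotent $x \in \n_\g(\h)$ lies in $\Lie(N^\circ_\red)$.

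For each such $x$, I would construct a one-parameter additive subgroup $\lambda_x \colon \Ga \to G$ with $d\lambda_x = x$ and image in $N$; then $x \in \Lie \lambda_x(\Ga) \subseteq \Lie(N^\circ_\red)$. For part (2), the bound $p > 2h-2$ dominates the nilpotency class of $\ad x$ on $\g$, which is at most $2h-1$ (attained on the Jordan blocks of sizes $2e_i+1$ of a regular nilpotent, the $e_i \leq h-1$ being the exponents of $G$). Thus $\exp(t\,\ad x) = \sum_n t^n(\ad x)^n/n!$ is a polynomial in $t$ with all denominators invertible, defining a morphism $\Ga \to \Aut(\g)$ which lifts to the sought $\lambda_x$. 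The hypothesis $\ad x(\h) \subseteq \h$ gives $(\ad x)^n(\h) \subseteq \h$ termwise, and so $\lambda_x(\Ga) \subseteq N$.

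For part (1), the hypothesis that $\h$ is a $p$-subalgebra identifies $\h = \Lie H$ for a unique height-one infinitesimal subgroup scheme $H \leq G$, giving $N_G(\h) = N_G(H)$. I would exponentiate a nilpotent $x$ in the minimal faithful representation $V$ of dimension $d$: since $x^d = 0$, the polynomial $\exp(tx) = \sum_{n<d} t^n x^n/n!$ lives in $\GL(V)(k[t])$, and standard integration results for nilpotents in reductive subgroups of $\GL(V)$ land this in $G$ under $p > d+1$. The main obstacle I foresee is then the verification that $\lambda_x(\Ga)$ normalises $H$, i.e.\ $\Ad(\exp(tx))(\h) \subseteq \h$, under the comparatively weak bound $p > d+1$: the naive argument, handling $\exp(t\,\ad x)$ on all of $\g$, would demand the much stronger $p > 2d-2$. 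Overcoming this will require exploiting the height-one, $p$-restricted nature of $H$---either by checking conjugation-invariance directly on the restricted enveloping algebra $u(\h) \subseteq u(\g)$, or by decomposing $1 \to C_G(H) \to N_G(H) \to N_G(H)/C_G(H) \to 1$ and combining smoothness of $C_G(H)$ (Proposition~\ref{prop:pretty}) with smoothness of the restricted Lie-algebra automorphism group $\Aut(\h)$, which contains $N_G(H)/C_G(H)$. The improved bound $p > h+1$ for simple classical $G$ similarly exploits the sharper bounds on Jordan block sizes of nilpotent elements in the standard representation.
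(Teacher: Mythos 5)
For part (2) your plan is essentially the paper's: take nilpotent generators $x$ of $\n_\g(\h)$, exponentiate them using the fact that $p>2h-2$ makes $\exp(t\,\ad x)$ a well-defined polynomial morphism, and observe that the resulting $\Ga$'s land in $N_G(\h)_\red$, forcing $\n_\g(\h)\subseteq\Lie(N_G(\h)_\red)$. The paper packages this a bit more cleanly by first invoking Serre's exponential $\exp_G\colon\g_{\mathrm{nilp}}\to G^u$ (Theorem \ref{thm:exp}) and then verifying $\Ad(\exp_G x)=\exp_{\GL}(\ad x)$ (Proposition \ref{prop:expad}); you go the other direction, from $\exp(t\,\ad x)\in\Aut(\g)$ and then a ``lift'' to $G$, which would need a word of justification, but the underlying mechanism is the same.

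For part (1) there is a genuine gap, and you have in fact put your finger on exactly where it is without having the means to close it. You correctly observe that exponentiating $\ad x$ term by term would demand roughly $p>2d-2$, not the stated $p>d+1$, and you propose two remedies. Neither works. The decomposition $1\to C_G(H)\to N_G(H)\to\Aut(\h)$ fails because the automorphism group scheme of a restricted Lie algebra is not in general smooth when $H$ is infinitesimal; indeed this is precisely the phenomenon that produces non-smooth normalisers. A concrete example: for $W_1$ one has $\dim\Aut(W_1)=p-1$ (changes of variable in $k[X]/X^p$) while $\Der(W_1)=W_1$ is $p$-dimensional, so $\Aut(W_1)$ is non-smooth. (The analogous step succeeds in the proof of Theorem \ref{thmreductivenorms} only because $\Aut(H)^0=\Int(H)$ is smooth for $H$ connected reductive by SGA3; there is no such statement for height-one $H$.) The ``check invariance in $u(\h)$'' idea is not developed and does not obviously circumvent the degree problem either.

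What the paper actually does to get from $p>2d-2$ down to $p>d+1$ is structural, not formal: it first establishes a Levi-type decomposition for restricted subalgebras $\h\subseteq\gl(V)$ with $\dim V<p$ (Theorem \ref{splittingLie}), writing $\h=\s\ltimes\r$ inside a parabolic $\p=\l+\q$ with $\r=\Rad_p(\h)\subseteq\q$ and $\s\subseteq\l$ acting completely reducibly. Applying this to $\n=\n_\g(\h)$, the Levi part $\n_\l$ is a direct sum of classical algebraic Lie algebras on small-dimensional spaces (Strade's theorem, Theorem \ref{Strade}), so it is literally $\Lie(N_\l)$ for a connected reductive subgroup of $\GL(V)$ rather than anything one needs to exponentiate; and the nilradical part $\n_\q$ can be exponentiated under the weaker hypothesis $p\geq\dim V$ because it sits inside $\q$ (Corollary \ref{expParab}), where no $\exp(l)\exp(r)$ correction term intervenes. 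One then checks $N_\l$ and $N_\q$ both normalise $\h$, using semisimplicity of $\q$ as an $N_\l$-module via Corollary \ref{SerCor}, and passes from $\GL(V)$ to simple classical $G$ by a reductive-pair argument (Proposition E). The inputs here---Theorem \ref{splittingLie}, Theorem C, Strade's theorem, and the cohomology vanishing of Theorem D used to split the extension $0\to\r\to\h\to\h_s\to 0$---are the real content of the paper, and none of them appear in your outline.
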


\begin{remarks}\label{thmbrems}
(a). The bounds in Theorem B(i) are tight when $G$ is classical of type $A$, $B$ or $C$: whenever $p\leq h+1$ the smallest irreducible representation of the first Witt algebra or its adjoint gives rise to a non-smooth normaliser which satisfies the hypotheses. Theorem B(i) is also tight for $G_2$, as it contains a copy of the Witt algebra as a maximal subalgebra when $p=7$; more generally, the conclusion of Theorem B(i) fails for all exceptional algebraic groups when $p=h+1$ (see \cite{HSMax}).   

(b). Suppose that $k$ is not algebraically closed, and that $G$ is a connected reductive algebraic group defined over $k$ with a closed, $k$-defined subgroup-scheme $H$.
Since smoothness is a geometric property, we have that $N_G(H)$ is smooth if and only if $N_{G_{\overline{k}}}(H_{\overline{k}})$ is smooth. Hence Theorems A and B give
sufficient conditions for the smoothness of normalisers over general base fields.
\end{remarks}

In proving the theorems above we require several auxiliary results which may be of independent interest. The first is necessary in proving Theorem B(i).

\begin{TC}Let $\g=\Lie(G)$ for $G$ a simply-connected classical algebraic group over an algebraically closed field $k$ and let $p>2$ be a very good prime for $G$. Then any maximal non-semisimple subalgebra of $\g$ is parabolic.\end{TC}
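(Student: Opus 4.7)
The plan is to prove that $\m$ preserves a subspace of the natural $\g$-module $V$ — totally isotropic in types $B$, $C$, $D$ — whose stabiliser in $\g$ is a proper parabolic subalgebra; by maximality of $\m$, it equals this stabiliser. The argument splits into ruling out that $\m$ acts irreducibly on $V$ and producing a totally isotropic invariant subspace in the reducible non-type-$A$ cases.

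\textbf{Reducible case.} For $\g=\sl(V)$ any proper nonzero $\m$-submodule $W$ has parabolic stabiliser $\Stab_\g(W)\subsetneq\g$, which equals $\m$ by maximality. For $\g\in\{\sp(V),\so(V)\}$ one needs a totally isotropic $\m$-submodule: take a minimal nonzero $\m$-submodule $W\subseteq V$ and observe that $W\cap W^\perp$ is an $\m$-submodule of $W$, hence either $0$ (so $W$ is nondegenerate) or $W$ (so $W$ is totally isotropic, and we stop). In the nondegenerate case $V=W\oplus W^\perp$ as $\m$-modules, and we iterate on $W^\perp$; absent any totally isotropic invariant subspace, the process terminates in an orthogonal decomposition $V=W_1\perp\cdots\perp W_k$ into nondegenerate $\m$-irreducible summands, forcing $\m\subseteq\sp(W_1)\oplus\cdots\oplus\sp(W_k)$ (or its orthogonal analogue), a proper semisimple subalgebra; by maximality $\m$ would equal this semisimple subalgebra, contradicting the non-semisimplicity assumption.

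\textbf{Irreducible case.} Here one derives a contradiction from the assumption that $\m$ acts irreducibly. Since $\m$ is non-semisimple, the last nonzero term of the derived series of $\Rad(\m)$ is a nonzero abelian $\m$-ideal $\mathfrak{a}$. Let $A\subseteq\End(V)$ be the commutative subalgebra generated by $\mathfrak{a}$ and $\mathrm{id}_V$; then $\m$ acts on $A$ by derivations via commutator (as $\mathfrak{a}$ is an $\m$-ideal), and derivations annihilate the idempotents of any commutative algebra. Hence $\m$ preserves each summand of the decomposition $V=\bigoplus_i e_iV$ along the primitive idempotents of $A$; irreducibility forces $A$ to be local, so every $a\in\mathfrak{a}$ acts on $V$ as $\lambda(a)\mathrm{id}_V+n_a$ with $n_a$ nilpotent. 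Under the very good hypothesis, $\lambda(a)\mathrm{id}_V$ is the semisimple part of the Jordan decomposition of $a$ on $V$ and so lies in $\g$; but scalars in $\sl(V)$ (using $p\nmid\dim V$), in $\sp(V)$ and in $\so(V)$ (using $p>2$) vanish, so $\lambda\equiv 0$ and $\mathfrak{a}$ acts by nilpotent operators. Engel's theorem then gives $V^\mathfrak{a}\neq 0$; this is $\m$-stable (as $\mathfrak{a}$ is an ideal), so $V^\mathfrak{a}=V$ by irreducibility, and faithfulness of $\g$ on $V$ gives $\mathfrak{a}=0$, a contradiction.

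The main obstacle is the irreducible case, where Lie's theorem is unavailable in positive characteristic and is replaced by the idempotent decomposition of the commutative enveloping algebra of $\mathfrak{a}$ in $\End(V)$. The very good prime hypothesis enters through both the existence of a well-behaved Jordan decomposition in $\g$ and the vanishing of scalar subalgebras of classical $\g$. The reducible case is a more direct invariant-subspace argument whose only subtlety is producing a totally isotropic $\m$-submodule in the non-type-$A$ cases.
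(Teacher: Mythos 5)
Your proof is correct and takes a genuinely different route from the paper in the irreducible case. The paper argues by type: for $\g$ of type $B$, $C$, or $D$ it uses Schur's lemma applied to $p$-th powers of a nonzero abelian ideal $J$ (since $x^p$ centralises $\g$ when $x\in J$, it acts by a scalar $\alpha(x)$, which the invariant form forces to vanish), while for type $A$ it invokes Frobenius induction theory from Strade--Farnsteiner (Corollary 5.7.6 and Proposition 5.6.2 of \cite{SF88}) to show that a non-semisimple irreducible subalgebra of $\sl(V)$ forces $p\mid\dim V$. Your commutative-algebra argument --- taking the associative enveloping algebra $A$ of the abelian ideal, noting that the $\ad$-action by $\m$ kills all idempotents of $A$, so irreducibility forces $A$ to be local, and then reading off the constant semisimple part via the residue map --- handles all four classical types uniformly and replaces the specialised induction machinery for type $A$ with an elementary trace computation. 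In fact your mechanism also recovers the paper's Proposition \ref{bilinear} directly. The treatment of the reducible case (totally isotropic submodules and orthogonal decompositions) matches the paper.

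One presentational gap worth closing: you assert that $\so(W_1)\oplus\cdots\oplus\so(W_k)$ is semisimple, but $\so(W_i)$ is one-dimensional abelian when $\dim W_i=2$. The case $\dim W_i=2$ is in fact vacuous --- $\m|_{W_i}\leq\so(W_i)$ would be abelian, hence would have a common eigenvector over the algebraically closed $k$, contradicting $\m$-irreducibility of $W_i$ --- but this needs to be said. The paper sidesteps the issue entirely by not claiming semisimplicity of the stabiliser: it instead notes that some projection of $\h$ to a factor $\so(W_i)$ must inherit non-semisimplicity, applies Proposition \ref{bilinear} to force reducibility on $W_i$, and contradicts the maximality of the number of summands in the decomposition; that route is marginally cleaner.
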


\begin{remark}An announcement of a full classification of the maximal non-semisimple subalgebras of the Lie algebras of classical groups is given in \cite{Ten87}. We provide a straightforward proof of the stated part in \S\ref{sec:maxvgood} below.\end{remark}

The proof of Theorem B(i) also uses a number of results on cohomology of low-dimensional modules. Such results have something of a history: in \cite{Jan97} Jantzen proved that a module for a connected reductive algebraic group with $p\geq \dim V$ is completely reducible. Building on this, Guralnick tackled the case of finite simple groups in \cite{Gur99}; this time one needs $p\geq\dim V+2$ for the same conclusion. In a different direction, Serre proved in \cite{Ser94s} that if two semisimple modules $V_1$ and $V_2$ for an arbitrary group satisfy $\dim V_1+\dim V_2<p+2$ then their tensor product is semisimple. Extending work of Bendel--Nakano--Pillen, we add analogues of these results for Lie algebras and Frobenius kernels of reductive algebraic groups tackling the `crucial case' of a question of Serre \cite[Question 1.2]{Ser94s} (though see Footnote \ref{deligne} below). We summarise our results when $G$ is simple into the following. The extensions to the case $G$ is semisimple or reductive can be found in \S\ref{sec:cohom}, where also can be found any unexplained terminology. 

\begin{TD}Suppose $G$ is a simple algebraic group and let $G_r$ be its $r$-th
Frobenius kernel with $\g$ its Lie algebra.
Let $V$ be a $k$-vector space with $\dim V\leq p$. 
\begin{itemize}
\item[(a)] 
Suppose $V$ is a $G_r$-module. Then
$V$ is completely reducible unless $\dim V=p$, and either $G$ is of type $A_1$ or $p=2$ and $G$ is of type $C_n$. In the exceptional cases, $V$ is known explicitly.

\item[(b)] Suppose $\g=[\g,\g]$ and $V$ is a $\g$-module. Then either $V$ is completely reducible or $\dim V=p$, $G$ is of type $A_1$ and $V$ is known explicitly.

\item[(c)] Let $p>h$. Then $\opH^2(\g,L(\mu))=0$, for all $\mu$ in the lowest alcove $C_\Z$, unless $G$ is of type $A_1$ and $\mu=(p-2)$; or $G$ is of type $A_2$ and $\mu=(p-3,0)$ or $(0,p-3)$.

\item[(d)] Suppose
$V$ and $W$ are semisimple $\g$-modules with $\dim V+\dim W<p+2$. Then $V\otimes W$ is semisimple and $\opH^2(\g,V\otimes W)=0$.
\footnote{\label{deligne}In \cite{Del13}, it is proved that if $V$ and $W$ are semisimple modules for an affine group scheme, satisfying $\dim V+\dim W<p+2$, then $V\otimes W$ is semisimple. The semisimplicity part of Theorem D(d) can be deduced from this. In fact, in part of Corollary \ref{SerCor} we do prove the semisimplicity statement for arbitrary Lie algebras. Our proof is different to Deligne's, relying just on a theorem of Strade, together with Theorem C.}\end{itemize}
\end{TD}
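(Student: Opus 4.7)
The four parts build sequentially. Parts (a) and (b) establish complete reducibility of small-dimensional modules for Frobenius kernels and restricted enveloping algebras respectively; part (c) is a low-degree cohomology vanishing statement; and part (d) is a Serre-type tensor product statement that combines the previous parts with Theorem C.

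For parts (a) and (b), my starting point is the classification of restricted simple $G$-modules of dimension at most $p$: up to Steinberg twists these reduce to the trivial module, the natural module for classical types, its dual, and a short list of small-rank exceptions (using Premet's dimension bounds, together with L\"ubeck's tables for the low-rank cases). Steinberg's tensor product theorem then forces a simple $G_r$-module of dimension $\leq p$ to be a single Frobenius twist of such a restricted simple. The linkage principle for $G_r$ ensures that any two non-isomorphic simples on this list lie in different linkage classes and hence have vanishing $\Ext^1_{G_r}$, except in precisely the two families listed in the statement: the $A_1$ case, where the projective cover of the trivial $u(\sl_2)$-module has dimension $p$; and the $C_n$, $p=2$ case, where $\sp_{2n}$ is degenerate and the natural module carries a known non-trivial self-extension. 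Self-extensions in the remaining cases are ruled out by the standard $G$-module structure on $\Ext^1_{G_r}$. Part (b) is then deduced from (a) by identifying restricted $\g$-modules with $G_1$-modules via $u(\g)\cong \mathrm{Dist}(G_1)$; the hypothesis $\g=[\g,\g]$ eliminates the pathological $C_n$, $p=2$ exception.

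For part (c), when $p>h$ and $\mu\in C_\Z$, the simple module $L(\mu)$ is simple also as a $G_1$-module, and the Friedlander--Parshall identification gives $\opH^\bullet(\g,L(\mu))\cong \opH^\bullet(G_1,L(\mu))$. The Lyndon--Hochschild--Serre spectral sequence relating $G$- and $G_1$-cohomology, combined with Andersen--Jantzen's explicit low-degree computations for $G_1$-cohomology of simples in $C_\Z$, yields $\opH^2=0$ outside the listed $A_1$ and $A_2$ cases. For part (d), I follow the approach indicated in the footnote: let $\rho:\g\to\gl(V\otimes W)$ and put $\bar\g:=\rho(\g)$. Since $V$ and $W$ are semisimple with $\dim V+\dim W<p+2$, each composition factor is an irreducible faithful representation of a quotient of $\g$ of dimension less than $p$; by Strade's theorem such a quotient is, modulo its nilpotent radical, classical reductive. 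If $V\otimes W$ contained a proper simple $\bar\g$-submodule $U$, its stabiliser would be a non-semisimple subalgebra, hence by Theorem C contained in a parabolic $\p\subset\bar\g$; a weight comparison between $V\otimes W$ and the Levi decomposition of $\p$ then yields a contradiction. The $\opH^2$ vanishing follows from this semisimplicity together with the block decomposition and part (c) applied to composition factors.

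The main obstacle will be part (d): Theorem C yields parabolics only from \emph{maximal} non-semisimple subalgebras, so one must verify that the stabiliser of $U$ embeds into such a maximal subalgebra in a way that preserves enough of the weight data to force the contradiction. A secondary delicate point is ensuring that the linkage separation in part (a) is sharp enough to isolate only the two stated exceptions across all the relevant low-rank cases.
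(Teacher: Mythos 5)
Your proposal goes wrong on the central technical point of this section, namely the gap between restricted cohomology (which is $G_1$-cohomology) and ordinary Lie-algebra cohomology. In part (c) you invoke an identification $\opH^\bullet(\g,L(\mu))\cong\opH^\bullet(G_1,L(\mu))$ and attribute it to Friedlander--Parshall; this is false. The restricted cohomology $\opH^\bullet_{\mathrm{res}}(\g,-)$ agrees with $\opH^\bullet(G_1,-)$, but the ordinary Lie-algebra cohomology $\opH^\bullet(\g,-)$ does not, and the paper's proof of this part is built precisely around controlling the difference via the six-term exact sequence of \cite[I.9.19(1)]{Jan03}, in which $\opH^1(\g,L)$ and $\opH^1(G_1,L)$ differ by $\Hom^s(\g,L^\g)$ and similarly for $\opH^2$. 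The same confusion undermines part (b): a $\g$-module extension between two restricted simples need not itself be restricted, so one cannot simply transfer complete reducibility from $G_1$-modules to $\g$-modules by "identifying restricted $\g$-modules with $G_1$-modules". The paper instead proves (b) by using the $\Ext^1_\g$ vanishing from Theorem~\ref{h1h2vanish}(c), whose proof requires a diagram chase showing that a certain map $(\g^*)^{[1]}\to\opH^2(G_1,L(\mu)\otimes L(\lambda)^*)$ is injective, which in turn uses good filtrations and Bendel--Nakano--Pillen's computation of $\opH^1(G_1,H^0(\mu))$.

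In part (a) the linkage argument is too weak: weights in $\bar C_\Z$ need not lie in distinct $G_1$-linkage classes, so linkage alone does not give $\Ext^1_{G_r}$ vanishing. The paper uses McNinch's result (Proposition~\ref{mcn1}) to place the highest weights in $\bar C_\Z$ and then the much finer explicit BNP vanishing theorem for $\Ext^1_{G_r}(L(\lambda),L(\mu))$ with $\lambda,\mu\in\bar C_\Z$ (Lemma~\ref{bnp}), which needs souping up to handle Frobenius twists and higher $r$ (Lemma~\ref{souped}). Your description of the $A_1$ exception is also slightly off: the $p$-dimensional uniserial module with factors $L(s)$, $L(p-2-s)$ is a baby Verma module, not the projective cover of the trivial module (which has dimension $2p$). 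Finally, your argument for part (d) does not parse: the stabiliser of a simple $\bar\g$-submodule $U$ of $V\otimes W$ inside $\bar\g$ is all of $\bar\g$, not a non-semisimple proper subalgebra, so the intended use of Theorem C has no purchase. The paper's route is different: after reducing via $p$-reductivity and Theorem~C to the case of a semisimple restricted subalgebra $\g\subseteq\sl(V)\times\sl(W)$, Strade's theorem shows $\g$ is either $W_1$ (handled directly) or algebraic, and then Serre's \cite[Prop.~7]{Ser94s} for algebraic groups gives the semisimplicity, with the $\opH^2$ vanishing coming from Theorem~\ref{h1h2vanish}.
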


We also mention a further tool, used in the proofs of Theorems A and B(i), for which we need a definition due to Richardson: 
Suppose that $(G',G)$ is a pair of reductive algebraic groups such that $G \subseteq G'$ is a closed
subgroup.
We say that $(G',G)$ is a \emph{reductive pair} 
provided there is a subspace $\m \subseteq \Lie(G')$ such that $\Lie(G')$ decomposes as a $G$-module into a direct sum $\Lie(G')=\Lie(G) + \m$.
Adapting a result from \cite{Her13} we show
\begin{PF}Let $(G',G)$ be a reductive pair and let $H \leq G$ be a
closed subgroup scheme.
Then if $N_{G'}(H)$ is smooth, $N_{G}(H)$ is smooth too.
\end{PF}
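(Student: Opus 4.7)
The plan is to mirror the argument of \cite{Her13} for centralisers, viewing normalisers as scheme-theoretic stabilisers under the conjugation action of $G'$ on a suitable moduli space.

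First, reduce to a dimension count. From the functor of points one has $N_G(H) = N_{G'}(H) \cap G$ as closed subgroup schemes of $G'$. Since $\Lie$ commutes with intersections of closed subgroup schemes, together with the hypothesised smoothness of $N_{G'}(H)$ this gives
\[
\Lie N_G(H) \;=\; \Lie N_{G'}(H) \cap \g \;=\; \n_{\g'}(H) \cap \g \;=\; \n_\g(H),
\]
where the last equality holds because the infinitesimal normaliser is intrinsic to $H$ as a closed subgroup scheme of its ambient group. Thus smoothness of $N_G(H)$ is equivalent to the dimension equality $\dim N_G(H) = \dim \n_\g(H)$, of which only the $\geq$ direction requires proof.

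To establish this, I realise $H$ as a $k$-point of a natural $G'$-scheme $X$ (for instance an appropriate Hilbert scheme of closed subschemes of $G'$), so that under the conjugation action on $X$ the stabilisers $\Stab_{G'}(H)$ and $\Stab_G(H)$ recover $N_{G'}(H)$ and $N_G(H)$ respectively. The essential technical input, adapted from \cite{Her13}, is the reductive pair lemma for stabilisers: if $(G',G)$ is a reductive pair, $G'$ acts on a scheme $X$, and $x \in X(k)$ has smooth $G'$-stabiliser, then $\Stab_G(x)$ is smooth. The proof combines smoothness of the $G'$-orbit through $x$ with the $G$-module decomposition $\g' = \g \oplus \m$ to show that the $G$-orbit of $x$ has the expected tangent space $\g\cdot x$ at $x$, equivalent to the separability statement $\dim \Stab_G(x) = \dim \Lie \Stab_G(x)$. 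Applying this lemma to $x = H$ yields smoothness of $N_G(H)$.

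The main obstacle is checking that the centraliser argument of \cite{Her13} extends cleanly to this more general stabiliser setting, in particular that the key tangent-space and dimension comparisons carry through when $H$ is non-smooth or non-connected; this relies only on the $G$-module structure on $\g'$ provided by the reductive pair decomposition, so the adaptation should be essentially formal.
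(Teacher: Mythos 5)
Your overall strategy — reduce to a dimension count and then adapt the reductive-pair argument from \cite{Her13} — is the right one, and the opening reduction (that smoothness of $N_G(H)$ is equivalent to the inequality $\dim N_G(H)\geq\dim\n_\g(H)$, with $\Lie N_G(H)=\Lie N_{G'}(H)\cap\g$) is correct. However, the way you carry out the second half leaves two genuine gaps.

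First, the appeal to ``an appropriate Hilbert scheme of closed subschemes of $G'$'' is not a safe black box here. The ambient $G'$ is affine, not projective, and $H$ is an arbitrary closed subgroup scheme which may be positive-dimensional and non-reduced; there is no off-the-shelf Hilbert scheme parametrising all such $H$ on which $G'$ acts with the stabiliser of the point $[H]$ equal, as a \emph{scheme}, to the scheme-theoretic normaliser $N_{G'}(H)$. If you wanted an auxiliary $G'$-scheme, the natural choice is $X=G'/N_{G'}(H)$ with the left-translation action (then $\Stab_{G'}(eN_{G'}(H))=N_{G'}(H)$ and $\Stab_G(eN_{G'}(H))=G\cap N_{G'}(H)=N_G(H)$ by construction), but at that point you have essentially reproduced the paper's argument rather than imported an external moduli space.

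Second, and more substantively, the ``reductive pair lemma for stabilisers'' in the generality you state it is not something you can simply quote from \cite{Her13}; the content of the proof lives precisely in the two facts you leave implicit. The paper's argument needs: (A) the formula $\dim\Lie N_{G'}(H)=\dim\Lie N_G(H)+\dim\m^H$, which comes from the identification $\dim\Lie N_G(H)=\dim\h+\dim(\g/\h)^H$ of \cite[II, \S5, Lem.~5.7]{DG:1970} applied in both $G$ and $G'$, together with the $H$-equivariant splitting $\g'/\h\cong(\g/\h)\oplus\m$; and (B) the bound $\dim N_{G'}(H)-\dim N_G(H)\leq\dim\m^H$, which is obtained by exhibiting a monomorphism of quotient schemes $N_{G'}(H)/N_G(H)\hookrightarrow(G'/G)^H$ with $T_{\bar e}(G'/G)^H\cong\m^H$. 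That monomorphism exists \emph{because} an element $n$ of $N_{G'}(H)$ satisfies $hnG=n(n^{-1}hn)G=nG$ for $h\in H$ — i.e.\ it is specific to normalisers of subgroup schemes, not to arbitrary stabilisers. In your more abstract framing neither the analogue of the Demazure--Gabriel formula nor the $H$-fixed-point target of the monomorphism is available for free, so the ``essentially formal adaptation'' you invoke is exactly the step that needs proof. To repair this you should work directly with $N_{G'}(H)/N_G(H)\hookrightarrow(G'/G)^H$ and the Lie-algebra dimension formula, as the paper does.
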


\subsection*{Acknowledgements}
Many thanks to Cornelius Pillen and Brian Parshall who provided a number of helpful hints during the production of this article, and to Alexander Premet for some useful conversations. Also we would like to express our gratitude to the referee for their comments on the paper and encouraging us to elucidate a couple of our arguments along the lines they suggested.
The first author acknowledges financial
support from ERC Advanced Grant 291512 (awarded to Gunter Malle).

\section{Notation and preliminaries}
Let $k$ be a field of characteristic $p\geq 0$ and let
$G$ be an algebraic group defined over $k$. 
Unless otherwise noted, $k$ will assumed to be algebraically closed.
For all aspects to do with the representation theory of
a connected reductive algebraic group $G$ we keep notation compatible with \cite{Jan03}. In particular, $R$ is the root system of $G$, and $h$ is the associated Coxeter number.

For a closed subgroup $H \leq G$,
we consider the scheme-theoretic normaliser $N_G(H)$, respectively
centraliser $C_G(H)$ of $H$ in $G$. We define $N_G(H)$ to be  subfunctor of $G$ which takes a $k$-algebra $A$ and returns the subgroup of elements 
\[N_G(H)(A)=\{g\in G(A) : gH(B)g^{-1}=H(B)\}\] for all $A$-algebras $B$. Similarly, the centraliser is defined via 
\[C_G(H)(A)=\{g\in G(A) : gh=hg \text{ for all }h\in H(B)\}.\]

Since $H$ is closed, $N_G(H)$ and $C_G(H)$ are closed subgroup schemes of $G$.

By contrast, for any affine algebraic group $H$ over $k$, we denote 
by $H_\red$ the smooth subgroup with $k$-points $H_\red(k)=H(k)$. As $k$ is algebraically closed, the existence and uniqueness of such a subgroup is explained for example in \cite[Prop.~5.1]{JSM} and (as we will use in the sequel) we have that $N_G(H)_\red(k')=N_{G(k')}(H(k'))(k')$ (resp.\  $C_G(H)_\red(k')=C_{G(k')}(H(k'))(k')$) by \cite[\S VII.6]{JSM}.

Let $\g$ be a Lie algebra over $k$. When the characterstic of $k$ is greater than $0$, $\g$ is often referred to as a modular Lie algebra, and as such our reference for the theory is \cite{SF88}. Recall that a Lie algebra $\g$ is \emph{semisimple} if its solvable radical is zero, and that in characteristic $p>0$ this is not enough to ensure that it is the direct sum of simple Lie algebras. 

Sometimes but not all the time, we will have $\g=\Lie(G)$ for $G$ an algebraic group, in which cas we refer to $\g$ as \emph{algebraic};
in this case, $\g$ will carry the structure of a restricted Lie algebra. Bear in mind that $\Lie(G)$ may not be semisimple even when $G$ is. Examples of this sort only occur in not-very-good characteristic; for instance, $\sl_2=\Lie(\SL_2)$ in characteristic $2$ gives a restricted structure on the solvable Lie algebra $\sl_2$ with $1$-dimensional centre.

More generally, all restricted Lie algebras are of the form $\Lie(H)$, where
$H$ is an infinitesimal group scheme of height one over $k$. 
Under this correspondence, the restricted subalgebras of $\g=\Lie(G)$ correspond
to height one subgroup schemes of $G$.
If the centre $Z(\g)=0$, then a Lie algebra $\g$ has at most one restricted structure.
In particular, if two semisimple restricted Lie algebras are isomorphic as 
Lie algebras, they are isomorphic as restricted Lie algebras. 

An abelian $p$-subalgebra $\h$ of $\g$ consisting of semsimple elements is called a \emph{torus} of $\g$. Cartan subalgebras of algebraic Lie algebras are always toral and in fact the Lie algebras of maximal tori of the associated algebraic group. This follows from \cite[Thm.\ 13.3]{Hum67}.

If $\g$ is a restricted Lie algebra, a representation $V$ is called
\emph{restricted} provided it is given by a morphism of restricted Lie
algebras $\g \to \gl(V)$. The following fact follows e.g.\ from
the Kac--Weisfeiler conjecture (see \cite[Cor. 3.10]{Premet95}):
if $G$ is a simple algebraic group defined in very good
characteristic, and if $V$ is an irreducible $\g$-module with $\dim V < p$,
then $V$ is restricted. In particular, it is well-known that $V$ is then obtained by
differentiating a simple restricted rational representation of $G$.

When $\g$ is a Lie algebra, $\Rad(\g)$ is the solvable radical of $\g$ and $N(\g)$ is the nilradical of $\g$. If $\g\subseteq\gl(V)$ there is also the radical of $V$-nilpotent elements $\Rad_V(\g)$. When $\g$ is restricted, $\Rad_p(\g)$ is the \emph{$p$-radical} of $\g$, defined to be the biggest $p$-nilpotent ideal.
Further, $\g$ is \emph{$p$-reductive} if the radical $\Rad_p(\g)$ is zero.
Recall the following properties 
 from \cite[\S 2.1]{SF88}:
\begin{lemma}
\begin{itemize}
\item[(a)] $\Rad_p(\g)$ is contained in the nilradical $N(\g)$ and hence
in the solvable radical of $\g$. In particular,
semisimple Lie algebras are $p$-reductive.
\item[(b)] $\Rad_p(\g)$ is the maximal $p$-nil (that is, consisting
of $p$-nilpotent elements) ideal of $\g$.
\item[(c)] $\g/\Rad_p(\g)$ is $p$-reductive.
\end{itemize}
\end{lemma}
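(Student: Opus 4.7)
The plan is to use the restricted-Lie-algebra identity $(\ad x)^{p^n}=\ad(x^{[p]^n})$ together with Engel's theorem, and then run a standard argument to show that the collection of $p$-nil ideals has a unique maximal element which behaves well under quotients. I will treat the three parts in the order (a), (b), (c), since (b) depends on (a) and (c) depends on (b).

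For (a), I would first observe that any $p$-nil ideal $I\trianglelefteq \g$ consists of $\ad$-nilpotent elements: if $x\in I$ satisfies $x^{[p]^n}=0$ then $(\ad x)^{p^n}=\ad(x^{[p]^n})=0$. Applying this to the action of $I$ on itself by adjoint representation, Engel's theorem gives that $I$ is nilpotent as a Lie algebra. Hence every $p$-nil ideal of $\g$ lies in the nilradical $N(\g)$, which in turn lies in the solvable radical. In particular, once the existence of a biggest $p$-nil ideal is established in part (b), the containment $\Rad_p(\g)\subseteq N(\g)\subseteq \Rad(\g)$ follows, and semisimple Lie algebras (whose solvable radical is zero) have $\Rad_p(\g)=0$, i.e.\ are $p$-reductive.

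For (b), I need to show that the sum $I+J$ of two $p$-nil ideals is again $p$-nil; then the union of the chain of all $p$-nil ideals will itself be $p$-nil, giving a unique maximal one, which must coincide with the ``biggest $p$-nilpotent ideal'' definition of $\Rad_p(\g)$. By (a), both $I$ and $J$, hence $I+J$, are nilpotent Lie algebras, so there exists $N$ with $(\ad(I+J))^N=0$ on $I+J$. Using Jacobson's formula $(x+y)^{[p]}=x^{[p]}+y^{[p]}+\sum_{i=1}^{p-1}s_i(x,y)$, where each $s_i(x,y)$ is a Lie polynomial in $\ad x,\ad y$ applied to $x$ (or $y$), one iterates: since commutators of sufficient length in $I+J$ vanish, a high enough power of the $[p]$-map lands inside $I$ (by absorbing all ``mixed'' terms into nested brackets that eventually vanish), and then a further power kills it since $I$ is $p$-nil. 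This is the routine but slightly technical step I expect to be the main obstacle, as one must bookkeep the interplay between bracket-nilpotency in $I+J$ and $p$-nilpotency in $I$ and $J$ carefully; no new idea is needed beyond Jacobson's formula.

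For (c), I would argue by contradiction: if $\g/\Rad_p(\g)$ had a nonzero $p$-nil ideal $\bar J$, let $J$ be its preimage in $\g$. Then for each $x\in J$, some $x^{[p]^m}$ lies in $\Rad_p(\g)$, and since $\Rad_p(\g)$ is $p$-nil, a further iterate $x^{[p]^{m+m'}}$ vanishes. Hence $J$ itself is a $p$-nil ideal of $\g$ strictly larger than $\Rad_p(\g)$, contradicting the maximality established in (b). Therefore $\Rad_p(\g/\Rad_p(\g))=0$, i.e.\ the quotient is $p$-reductive.
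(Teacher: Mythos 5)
The paper does not prove this lemma: it is quoted as a recollection from Strade--Farnsteiner, \cite[\S 2.1]{SF88}, with no argument given, so there is no in-paper proof to compare against. Your general strategy (Engel's theorem for (a), Jacobson's formula for (b), maximality for (c)) is the standard one, and parts (a) and (c) are fine as sketched.

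For (b), however, the mechanism you propose for the bookkeeping step is not quite the right one, and this is where the sketch has a genuine gap. You suggest that after iterating the $p$-map on $a+b$ (with $a\in I$, $b\in J$), the mixed Jacobson terms become nested brackets of ever-increasing length which vanish by nilpotency of $I+J$. That is not how the iteration behaves: the second iterate involves $p$-th powers of the first-stage mixed terms, and those are not killed by bracket-nilpotency; and even if the mixed terms did vanish you would be left with $a^{[p]^m}+b^{[p]^m}$, where $b^{[p]^m}$ lies in $J$ rather than $I$, so you would still need to invoke $p$-nilpotency of $J$ separately. The clean observation, which makes the nilpotency of $I+J$ unnecessary altogether, is that each Jacobson correction term $s_i(a,b)$ is an iterated bracket whose innermost entry is $a$, hence lies in $I$ because $I$ is an ideal. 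Therefore $(a+b)^{[p]}\equiv b^{[p]}\pmod{I}$, and by induction $(a+b)^{[p]^m}\equiv b^{[p]^m}\pmod{I}$ for all $m$ (at each step the new correction terms again have an $I$-element innermost). Choosing $m$ large enough that $b^{[p]^m}=0$ (possible since $J$ is $p$-nil and finite-dimensional) gives $(a+b)^{[p]^m}\in I$, and a further iterate of the $p$-map annihilates it since $I$ is $p$-nil. Thus $I+J$ is $p$-nil and the unique maximal $p$-nil ideal exists. In short: your instinct that Jacobson's formula is the only tool needed was correct, but the key point is that the mixed terms \emph{land inside} $I$, not that they \emph{eventually vanish}, and the argument does not need nilpotency of $I+J$ at all.
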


In particular, by part (b), if $\g \subseteq \gl(V)$ is a restricted subalgebra
then $\Rad_p(\g) = \Rad_V(\g)$.
If $\g\subseteq\gl(V)$ is a  restricted Lie subalgebra and $G_1$ is the height one subgroup scheme of $\GL(V)$ associated to $\g$, then $\g$ is $p$-reductive if and only if
$G_1$ is reductive in the sense that is has no connected normal nontrivial unipotent 
subgroup schemes.
For the usual notion of reductivity of smooth algebraic groups only smooth
unipotent subgroups are considered. The relation between these two concepts
is as follows:

\begin{prop}[\!\!{\cite{Vas05}}] \label{vasiu}
Let $G$ be a connected reductive algebraic group.
Then $G$ has no non-trivial connected normal unipotent subgroup schemes,
except if both
$p=2$ and $G$ contains a direct factor isomorphic to $\SO_{2n+1}$ for some $n\geq1$.
\end{prop}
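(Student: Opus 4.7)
The plan is to translate the statement about non-smooth normal unipotent subgroup schemes of $G$ into one about the $p$-radical of $\g = \Lie(G)$, reduce to the almost-simple case, and appeal to the classification of $G$-stable ideals of $\g$ in positive characteristic.

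Suppose $N$ is a non-trivial connected normal unipotent subgroup scheme of $G$. Since $G$ is reductive in the usual (smooth) sense, $N_\red = 1$, so $N$ is infinitesimal; its first Frobenius kernel $N_1$ is still non-trivial, connected, normal, and unipotent, and is now of height one. Via the correspondence between height-one subgroup schemes of $G$ and restricted subalgebras of $\g$ recalled above, $\Lie(N_1)$ is a non-zero $G$-stable restricted ideal of $\g$ consisting of $p$-nilpotent elements. Conversely, any such ideal integrates to a non-trivial connected normal unipotent subgroup scheme. So the proposition is equivalent to the assertion that $\Rad_p(\g) = 0$ outside the claimed exceptional case.

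Next I would reduce to $G$ almost-simple. Choose a central isogeny $\pi : G^{sc} \times Z(G)^0 \to G$, where $G^{sc}$ is the simply-connected cover of the derived group; its differential has toral kernel, and so identifies $p$-nil ideals of source and target. Decompose $G^{sc} = \prod_i G_i^{sc}$ into almost-simple factors so that $\Lie(G^{sc}) = \bigoplus_i \Lie(G_i^{sc})$ as a direct sum of ideals, and note that $\Lie(Z(G)^0)$ is toral. Since $\Rad_p$ respects such direct sums, $\Rad_p(\g) \neq 0$ if and only if $\Rad_p(\Lie(G_i^{sc})) \neq 0$ for some $i$; hence we may assume $G$ is simply-connected and almost simple.

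For such $G$, the $G$-stable ideals of $\g$ are classified in the modular Lie algebra literature (Hogeweij; see references in \cite{Vas05}). In very good characteristic, $\g$ is simple as a Lie algebra, so $\Rad_p(\g) = 0$. In good but not very good characteristic (so $G$ of type $A_n$ with $p \mid n+1$), the only proper non-zero $G$-stable ideal is the $1$-dimensional scalar centre, which is toral and not $p$-nil. In bad characteristic a finite list of exceptional types arises, but in each case a direct check shows that every proper $G$-stable ideal is either toral or contains non-$p$-nil elements, \emph{except} when $p=2$ and $G$ is of type $B_n$: there the kernel of the exceptional isogeny $\Lie(\Sp_{2n}) \to \Lie(\SO_{2n+1})$ provides a non-zero $p$-nil ideal, which integrates to the advertised infinitesimal unipotent normal subgroup scheme of $\SO_{2n+1}$. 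The main obstacle is the bad-characteristic casework in this final step; this is essentially what is carried out in \cite{Vas05}, whereas the reduction steps above are purely structural.
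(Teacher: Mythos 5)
The paper does not give a proof of this proposition; it is cited to Vasiu \cite{Vas05} and used as a black box, so there is no in-paper argument to compare against. Your attempt must therefore be judged on its own merits.

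Your opening translation is sound: a non-trivial connected normal unipotent subgroup scheme $N$ of a reductive $G$ has $N_\red=1$, hence is infinitesimal; passing to the first Frobenius kernel and using the height-one correspondence reduces the question to whether $\g=\Lie(G)$ has a nonzero $\Ad(G)$-stable $p$-nil restricted ideal, and since $\Rad_p(\g)$ is canonical (hence automatically $\Ad(G)$-stable) the proposition is indeed equivalent to $\Rad_p(\g)=0$ outside the exceptional case.

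The reduction to the simply-connected, almost-simple case, however, contains a genuine gap. You assert that because $d\pi:\Lie(G^{sc}\times Z(G)^0)\to\g$ has toral kernel it ``identifies $p$-nil ideals of source and target''. This is false in general: the differential of a central isogeny need not be surjective when $p$ divides the order of the kernel (e.g.\ $d\pi:\sl_p\to\mathfrak{pgl}_p$ has image of codimension one), so a nonzero $p$-nil ideal of $\g$ need not come from one in $\Lie(G^{sc})$. This failure is precisely where the content of Vasiu's theorem lies. In characteristic $2$ one has $\Rad_p(\so_{2n+1})\neq 0$ (this is the stated exception) while $\Rad_p(\Lie(\Spin_{2n+1}))=0$: already for $n=1$, $\so_3\cong\mathfrak{pgl}_2$ contains the nonzero abelian $p$-nil ideal spanned by the images of $e$ and $f$ (note $[e,f]=h=I\equiv 0$ in $\mathfrak{pgl}_2$ when $p=2$), whereas in $\sl_2=\Lie(\Spin_3)$ every nonzero ideal contains the toral central element $h=I$, so $\Rad_p(\sl_2)=0$. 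Your reduction would therefore wrongly conclude $\Rad_p(\so_{2n+1})=0$, contradicting the very exception in the statement you are proving. The whole point of the ``$G$ contains a direct factor isomorphic to $\SO_{2n+1}$'' phrasing is that the answer depends on the isogeny type of $G$ and not merely on its root system, so one cannot simply pass to $G^{sc}$ (nor, for the same reason, to $G^{ad}$, since $\Spin_{2n+1}$ would then become an exception, which it is not).

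Two smaller points. The exceptional $p$-nil ideal is the kernel of the differential of the special isogeny $\SO_{2n+1}\to\Sp_{2n}$, not of the map $\Lie(\Sp_{2n})\to\Lie(\SO_{2n+1})$ as you wrote; the kernel of the latter is an ideal of $\sp_{2n}$ and would integrate to a normal subgroup scheme of the wrong group. And in the ``good but not very good'' case, the $1$-dimensional scalar centre is a $G$-stable ideal of $\sl_{n+1}$, but for other isogeny types of $A_n$ (e.g.\ $\mathfrak{pgl}_{n+1}$) the $G$-stable ideal lattice is different; again, the analysis must be carried out isogeny type by isogeny type.
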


Since there are a number of possible definitions, let us be clear on the following: We define a \emph{Borel subalgebra} (resp.\ \emph{parabolic subalgebra}, resp.\ \emph{Levi subalgebra}) of $\g$ to be $\Lie(B)$ (resp.\ $\Lie(P)$, resp.\ $\Lie(L)$), where $B$ (resp.\ $P$, resp.\ $L$) is a Borel (resp.\ parabolic, resp.\ Levi subgroup of a parabolic) subgroup of $G$.

By $P=LQ$ we will denote a parabolic subgroup of $G$ with unipotent radical $Q$ and Levi factor $L$. We will usually write $\p=\Lie(P)=\l+\q$. A fact that we will use continually during this paper, without proof, is that if $H$ (resp. $\h$) is a subgroup (resp. subalgebra) of $P$ (resp. $\p$), such that the projection to the Levi is in a proper parabolic of the Levi, then there is a strictly smaller parabolic $P_1<P$ (resp. $\p_1<\p$) such that $H\leq P_1$ (resp. $\h\leq \p_1$). See \cite[Prop.\ 4.4(c)]{BT65}.

We also use the following fact: If $\t \subseteq \gl_n$ is a torus, then $C_{\GL_n}(\t)$
is a Levi subgroup (this follows e.g.\ from the construction
of a torus $T \subseteq \GL_n$ in \cite[Prop.\ 2]{Dieu:1953} with $C_{\GL_n}(\t) = C_{\GL_n}(T)$). 

Let $V$ be an $\g$-module and let $\lambda:V\times V\to k$ be a bilinear form on $V$. We say $\g$ preserves $\lambda$ if $\lambda(x(v),w)=-\lambda(v,x(w))$ for all $x\in \g$, $v,w\in V$. 

We recall definitions of the algebraic simple Lie algebras of classical type: those with root systems of types A--D. Then $\mathfrak{o}(V)$ is the set of elements $x\in \gl(V)$ preserving the form $\lambda(v,w)=v^tw$. $\mathfrak{so}(V)$ is the subset of traceless matrices of $\mathfrak{o}(V)$. On the other hand  when $\dim V$ is even, $\sp(V)$ is the set of elements preserving the form $\lambda(v,w)=v^tJw$ with $J=[[0,-I_n],[I_n,0]]$. If char $k\neq 2$ then $\sp(V)$ and $\so(V)$ are simple (see below).

We say $\sp(V)$ is of type $C_n$ with $2n=\dim V$; $\so(V)$ is of type $B_n$ when $\dim V=2n+1$, or type $D_n$ when $\dim V=2n$. One fact that we shall use often in the sequel is that that for types B--D, parabolic subalgebras are the stabilisers of totally singular subspaces. (See for example, \cite{Kan79}.)

Furthermore recall that if $G$ is simple, then $\g$ is simple at least whenever $p$ is very good. See \cite[Cor.~2.7]{Hog82} for a more precise statement. This means in particular that $\sl(V)$ is simple unless $p|\dim V$, in which case the quotient $\psl(V)=\sl(V)/kI$ is simple; we refer to such algebras as type $A_n$ classical Lie algebras, where $\dim V=n+1$. In all cases, we refer to $V$ as the \emph{natural module} for the algebra in question.

We make extensive use of the current state of knowledge of cohomology in this paper,
especially in \S\ref{sec:cohom}.
Importantly, recall that the group $\Ext^1_A(V,W)$
(with $A$ either an algebraic group or a Lie algebra)
corresponds to the equivalence classes of extensions $E$ of $A$-modules $0\to W\to E\to V\to 0$,
and that $\opH^2(A,V)$ measures the equivalence classes of central extensions $B$ of $V$ by $A$,
equivalence classes of exact sequences  $0\to V\to B\to A\to 0$,
where $B$ is either an algebraic group or a Lie algebra.
We remind the reader that for restricted Lie algebras, two forms of cohomology are available---the ordinary Lie algebra cohomology, denoted $\opH^i(\g,V)$ or the restricted Lie algebra cohomology (where modules respectively morphisms are assumed to be restricted). 
Since the latter can always be identified with 
$\opH^i(A,V)$ for $A$ the height one group scheme
associated to $\g$, we shall always use the associated group scheme
when we wish to discuss restricted cohomology.

Finally, we record the following theorem of Strade which is a central tool
in our study of small-dimensional representations. Let char $k=p>0$ and let $O_1=k[X]/X^p$ be the truncated polynomial algebra. Then the first Witt algebra $W_1$ is the set of derivations of $O_1$, with basis $\{X^r\partial\}_{0\leq r\leq p-1}$, where $\partial$ acts on $O_1$ by differentiation of polynomials. For $p>2$, $W_1$ is simple, and for $p>3$, $W_1$ is not the Lie algebra of any algebraic group. Since there is a subspace $k\leq O_1$ fixed by $W_1$, we see that $W_1$ has a faithful $(p-1)$-dimensional representation for $p>2$. 
\begin{theorem}[\!\!{\cite[Main theorem]{Str73}}]\label{Strade}
Let $\g$ be a semisimple Lie subalgebra of $\gl(V)$ over an algebraically closed field $k$ of characteristic $p>2$ with $p>\dim V$. 
Then $\g$ is either a direct sum of algebraic Lie algebras or $p=\dim V+1$ and $\g$ is the $p$-dimensional Witt algebra $W_1$.
\end{theorem}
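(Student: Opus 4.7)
My plan is to reduce the theorem to the classification of simple modular Lie algebras with small faithful representations, via Block's structure theorem for the socle of a semisimple modular Lie algebra.

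Since $\g$ is semisimple, the centraliser $C_\g(\soc(\g))$ vanishes: it is itself an ideal of $\g$, hence semisimple; any nonzero ideal of $\g$ contains a minimal ideal of $\g$, i.e.\ an ideal lying in $\soc(\g)$; but $\soc(\g)$ has trivial centre (its minimal-ideal summands $S\otimes O$ have trivial centre, since $S$ is simple), so $C_\g(\soc(\g))\cap\soc(\g)=0$, forcing $C_\g(\soc(\g))=0$. Consequently the adjoint action gives an injection $\g\hookrightarrow\mathrm{Der}(\soc(\g))$. By Block's theorem, each minimal ideal $\mathfrak{m}_i\trianglelefteq\g$ is isomorphic to $S_i\otimes O(m_i;\underline{n}_i)$ for some simple Lie algebra $S_i$ and truncated polynomial ring $O(m_i;\underline{n}_i)$.

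The first substantive step is to show, using the hypothesis $p>\dim V$, that $m_i=0$ for every $i$. If $m_i\geq 1$, the minimal ideal $\mathfrak{m}_i$ contains the abelian ideal $\mathfrak{a}_i:=S_i\otimes k\cdot t$, where $t$ is the top-degree monomial of $O(m_i;\underline{n}_i)$, and every element of $\mathfrak{a}_i$ is $p$-nilpotent. Combining this with the action on $V$ of a Cartan subalgebra $\mathfrak{h}_i\otimes O_i\subseteq\mathfrak{m}_i$ via a weight-space argument---after passing to the restricted closure of $\g$ inside $\gl(V)$ to have access to Jordan decomposition---forces $\dim V\geq p$, a contradiction. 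Hence $\soc(\g)=\bigoplus_i S_i$ is a direct sum of simple Lie algebras.

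Each $S_i$ then has a faithful representation on $V$ of dimension at most $\dim V<p$. A simple modular Lie algebra admitting such a small faithful representation must be either classical---the Lie algebra of a simple algebraic group---or the Witt algebra $W_1$, whose smallest faithful representation is the $(p-1)$-dimensional quotient $O_1/k$. Whenever $S_i$ is classical, the bound $\dim V<p$ automatically forces $p$ to be very good for $S_i$ (any faithful representation of a classical simple Lie algebra has dimension at least that of its natural module), and hence $\mathrm{Der}(S_i)=S_i$. If some $S_i=W_1$, then $\dim V\geq p-1$ forces $\dim V=p-1$, $p=\dim V+1$, and the $(p-1)$-dimensional action of $W_1$ fills $V$ to the extent that no other simple summand of $\g$ can act nontrivially, giving $\g=W_1$. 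Otherwise every $S_i$ is classical, and since derivations of a direct sum of simple (hence perfect) Lie algebras do not mix factors, $\mathrm{Der}(\soc(\g))=\bigoplus\mathrm{Der}(S_i)=\bigoplus S_i=\soc(\g)$; combining with $\soc(\g)\subseteq\g\hookrightarrow\mathrm{Der}(\soc(\g))$ forces the equality $\g=\bigoplus S_i$, exhibiting $\g$ as a direct sum of algebraic Lie algebras.

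The principal obstacle is the dimension estimate in the second paragraph ruling out truncated polynomial factors in minimal ideals; $\g$ is a priori only an abstract Lie subalgebra of $\gl(V)$ and need not be restricted, so one must carefully pass to the restricted closure to compare nilpotent and semisimple parts of operators on $V$. Strade's original argument proceeds by induction on $\sum n_i$ together with a detailed analysis of the $p$-envelope of $\g$ inside $\gl(V)$.
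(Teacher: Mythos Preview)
The paper does not prove this theorem. It is quoted verbatim from Strade's original article \cite{Str73} at the end of the preliminaries section and used as a black box throughout; there is no proof in the paper to compare your proposal against.

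As for the sketch itself, the Block-theoretic reduction is a sensible strategy, but two of the steps are not really carried out. First, the elimination of truncated polynomial factors (\emph{i.e.}\ showing $m_i=0$) is only gestured at: you invoke a ``weight-space argument'' combining a $p$-nilpotent abelian subspace $S_i\otimes k\cdot t$ with a Cartan subalgebra $\h_i\otimes O_i$ to force $\dim V\geq p$, but neither the precise mechanism nor why this works once one passes to the $p$-envelope is spelled out; note also that $S_i$ need not be restricted, so ``Cartan subalgebra of $S_i$'' already requires care. Second, and more seriously, the assertion that a simple modular Lie algebra with a faithful representation of dimension $<p$ must be classical or $W_1$ is exactly the simple case of the theorem you are trying to prove. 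You cannot appeal to the Block--Wilson--Strade--Premet classification here, since that was completed decades after \cite{Str73}; Strade's original argument necessarily proceeds by a direct analysis (Cartan subalgebras, root-space structure, and the constraint imposed by acting faithfully on a space of dimension $<p$), and that analysis is the real content of the result. Your final paragraph implicitly concedes this, but the body of the proposal does not supply it.
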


\section{Smoothness of normalisers of reductive subgroups}

Let $G$ be a connected reductive algebraic group and let $T$ be a maximal torus in $G$ with associated roots $R$, coroots $R^\vee$, characters $X(T)$ and cocharacters $Y(T)$.
We say that a prime $p$ is \emph{pretty good} for $G$ provided it is good for $R$ and provided that both $X(T)/\Z R$ and $Y(T)/\Z R^\vee$ have no $p$-torsion.
We recall the main result of \cite{Her13}.

\begin{prop} \label{prop:pretty}
Let $G$ be as above, and let $p=\Char(k)$. Then $p$ is pretty good for $G$ if and only if all centralisers of closed subgroup schemes in $G$ are smooth.
\end{prop}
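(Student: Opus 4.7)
The plan is to prove both directions by identifying the right reductions on the subgroup-scheme side and then appealing to known smoothness results for the building-block cases. Throughout, smoothness of a scheme-theoretic centraliser $C_G(H)$ means $\Lie(C_G(H))=\c_\g(\Lie(H))\cap\c_\g(H_\red)$, so the question is really one of matching Lie-algebra-level centralisers with group-level ones after a dimension comparison.

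For the "if" direction, I would first reduce from an arbitrary closed subgroup scheme $H$ to a pair consisting of $H_\red$ and a restricted subalgebra of $\g$. Since $H$ is generated, as a subgroup scheme, by its reduced part and a height-one piece corresponding to $\Lie(H)$, one has $C_G(H)=C_G(H_\red)\cap C_G(\Lie(H))$ as schemes. Standard intersection and dimension bookkeeping reduces smoothness of $C_G(H)$ to smoothness of each factor, and then, by choosing finitely many generators, to the case of centralisers of individual group elements and of individual Lie algebra elements. For a Lie algebra element $x\in\g$, use the Jordan decomposition $x=x_s+x_n$; then $C_G(x)=C_G(x_s)\cap C_G(x_n)$. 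The centraliser $C_G(x_s)$ of the toral part is, by a standard argument, the Levi $C_G(T')$ of a suitable torus $T'$ through $x_s$; the pretty-good hypothesis (no $p$-torsion in $X(T)/\Z R$ or $Y(T)/\Z R^\vee$, and $p$ good) is exactly what guarantees that this scheme-theoretic centraliser is a (smooth, reductive) Levi. Inside this Levi, smoothness of $C_G(x_n)$ follows from Richardson's criterion for centralisers of nilpotents, which holds in good characteristic. A parallel but simpler argument handles centralisers of group elements via multiplicative Jordan decomposition.

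For the "only if" direction I would exhibit, for each failure of the pretty-good condition, a closed subgroup scheme with non-smooth centraliser. If $p$ is bad for $R$, a classical example produces a nilpotent $e\in\g$ for which $\dim\c_\g(e)>\dim C_G(e)$. If $X(T)/\Z R$ has $p$-torsion, then the scheme-theoretic centre $Z(G)=C_G(G)$ acquires infinitesimal directions (as in $Z(\SL_p)=\mu_p$), giving a non-smooth centraliser directly. The condition on $Y(T)/\Z R^\vee$ is detected dually, by taking the centraliser of a suitable infinitesimal multiplicative subgroup scheme (the "scheme-theoretic fundamental group"), which forces extra Lie algebra centralising directions when $p$ divides the order of that torsion group.

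The main obstacle is the first direction. Reducing from arbitrary closed subgroup schemes to single elements is delicate because smoothness is not preserved by intersection in general, so one needs the pretty-good hypothesis to do double duty: it must both give smoothness of centralisers of semisimple elements (identifying them with honest Levis, not just Levis up to finite infinitesimal kernel) and compatibly control the smoothness of the intersection step. Equivalently, it is the precise calibration of the $p$-torsion conditions in $X(T)/\Z R$ and $Y(T)/\Z R^\vee$ that is forced on us by this intersection bookkeeping, and verifying that these conditions are exactly the right ones—not merely sufficient—is what ties the two halves of the equivalence together.
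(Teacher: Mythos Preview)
The paper does not prove this proposition at all: it is quoted as ``the main result of \cite{Her13}'' and used as a black box throughout. So there is no proof in the paper to compare your attempt against.

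That said, two remarks on your sketch. First, your ``if'' and ``only if'' labels are reversed: the implication in which you assume $p$ is pretty good and deduce smoothness of all centralisers is the ``only if'' direction of the biconditional as stated, and vice versa.

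More substantively, your reduction step has a real gap. The claim that an arbitrary closed subgroup scheme $H$ is generated by $H_\red$ together with a height-one piece is false in general: the infinitesimal part of $H$ can have arbitrary Frobenius height (e.g.\ $H=G_r$ for $r>1$), so $C_G(H)$ need not equal $C_G(H_\red)\cap C_G(\Lie(H))$. Even granting that reduction, you yourself note that smoothness is not preserved under intersections, and you do not explain how the pretty-good hypothesis repairs the passage to single elements. The argument in \cite{Her13}, insofar as one can infer it from the present paper's later use of the same circle of ideas (see \S\ref{sec:reducpairs} and the proof of Proposition~\ref{redpairnorm}, which adapts \cite[Lem.~3.6]{Her13}), proceeds instead via reductive pairs: one characterises pretty good primes as exactly those for which $G$ embeds as the smaller member of a reductive pair $(\GL_d\times S,G)$ with $S$ a torus, and then smoothness of centralisers descends from the ambient group (where it is automatic) to $G$ by a single tangent-space comparison. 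This sidesteps the element-by-element intersection bookkeeping entirely.
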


\begin{theorem}\label{thmreductivenorms} Let $G$ be a connected reductive algebraic group. Then the normalisers $N_G(H)$ of all (smooth) connected reductive subgroups are smooth if $p$ is a pretty good prime for $G$.\end{theorem}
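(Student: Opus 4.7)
The plan is to compare $N_G(H)$ scheme-theoretically with the (obviously smooth) subgroup $K := H \cdot C_G(H)^0$ sitting inside it. Since $p$ is pretty good for $G$, Proposition \ref{prop:pretty} guarantees that $C_G(H)$ is smooth; in particular $\Lie(C_G(H)) = \c_\g(\h)$ has dimension $\dim C_G(H)$. The key structural tool to exploit is the conjugation morphism $\phi : N_G(H) \to \Aut(H)$, whose scheme-theoretic kernel is precisely $C_G(H)$. For $H$ connected reductive, $\Aut(H)$ is a smooth algebraic group of the form $H^{\mathrm{ad}} \rtimes \Gamma$, with $\Gamma$ the finite constant group of diagram automorphisms of $H$; in particular $\Aut(H)^0 = \mathrm{Inn}(H) = H/Z(H)$, and hence $\Lie(\Aut(H)) = \h/\z(\h)$.

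Next I would differentiate the induced closed immersion $N_G(H)/C_G(H) \hookrightarrow \Aut(H)$ to obtain an injection $\n_\g(\h)/\c_\g(\h) \hookrightarrow \h/\z(\h)$. On the other hand, $\h \subseteq \n_\g(\h)$ together with $\h \cap \c_\g(\h) = \z(\h)$ produces a natural injection in the opposite direction $\h/\z(\h) \hookrightarrow \n_\g(\h)/\c_\g(\h)$. Sandwiching these two forces the equality $\n_\g(\h) = \h + \c_\g(\h)$, and the right-hand side is exactly $\Lie(K)$.

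To conclude: since $\Aut(H)/\mathrm{Inn}(H) = \Gamma$ is finite, the index of $H(k)\cdot C_G(H)(k)$ in $N_G(H)(k)$ is finite, so $N_G(H)^0_\red = K$. Combined with $\Lie(K) = \Lie(N_G(H))$ from the previous step, the (normal) quotient group scheme $N_G(H)^0/K$ is infinitesimal with trivial Lie algebra, and so is trivial. Hence $N_G(H)^0 = K$ as schemes, and $N_G(H)$ is smooth. The main input beyond Proposition \ref{prop:pretty} is identifying $\Lie(\Aut(H)) = \h/\z(\h)$ via the standard structure of $\Aut(H)$ for connected reductive $H$; the rest is a clean dimension-sandwich and realises the paper's description of this as ``a straightforward reduction to the case of centralisers''.
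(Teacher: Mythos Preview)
Your proof is correct and follows essentially the same route as the paper: both exploit the conjugation map $N_G(H) \to \Aut(H)$ with kernel $C_G(H)$, use Proposition~\ref{prop:pretty} for smoothness of $C_G(H)$, and invoke the structure $\Aut(H)^0 = \Int(H) = H/Z(H)$ for connected reductive $H$. The paper simply observes that in the short exact sequence $1 \to C_G(H) \to N_G(H) \to \Int(N_G(H)) \to 1$ the outer terms are smooth (the right-hand one because $\Int(N_G(H))^0 = \Aut(H)^0$ is smooth), hence so is the middle; you instead unpack this by computing $\Lie(N_G(H)) = \h + \c_\g(\h) = \Lie(K)$ explicitly and then arguing $N_G(H)^0 = K$. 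These are two phrasings of the same argument.

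One small simplification: once you have $\Lie(K) = \Lie(N_G(H))$ with $K$ smooth and $K \subseteq N_G(H)$, the dimension count $\dim K = \dim \Lie(K) = \dim \Lie(N_G(H)) \geq \dim N_G(H) \geq \dim K$ already gives smoothness of $N_G(H)$ directly; the detour through $N_G(H)^0_\red = K$ and the quotient $N_G(H)^0/K$ is not needed.
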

\begin{proof}
Let $H \leq G$ be a closed, connected reductive subgroup of $G$. We have an exact sequence of group functors
\begin{align*}
1 \rightarrow C_G(H) \rightarrow N_G(H) \xrightarrow{\Int} \Aut(H).
\end{align*}
Here the first map is the natural inclusion, the second map maps $x \in G$ to the automorphism $\Int(x)$ of $H$ given by conjugation with $x$,
and $\Aut(H)$ is the group functor that associates to each $k$-algebra $S$ the group of automorphisms of the group scheme $H_S$.
By \cite[XXIV, Cor.\ 1.7]{SGA3}, we have that $\Aut(H)^0 = \Int(H)$ is smooth, which implies that $\Int(N_G(H))$ is smooth.
By Proposition \ref{prop:pretty}, $C_G(H)$ is smooth. Thus the outer terms in the exact sequence of affine group schemes
\begin{align*}
1 \rightarrow C_G(H) \rightarrow N_G(H) \rightarrow \Int(N_G(H)) \rightarrow 1
\end{align*}
are smooth, which forces $N_G(H)$ to be smooth.
\end{proof}

\begin{remark}The implication in the theorem cannot quite be reversed.
For example if $G$ is $\SL_2$, $p=2$ is not pretty good, but a connected reductive subgroup is either trivial, or a torus, whose normaliser is smooth. However, we give examples of non-smooth normalisers of connected reductive subgroups in bad characteristics in Examples \ref{badchar} below.\end{remark}

\section{On exponentiation and normalising, and the proof of Theorem B(ii)}

Let $G$ be a connected reductive group. We recall the existence of exponential and logarithm
maps for $p$ big enough, see \cite[Thm.\ 3]{Ser98} or \cite[Prop.~5.2]{Sei00}.
We fix a maximal torus $T$ and a Borel subgroup $B = T \ltimes U$ containing $T$.

\begin{theorem} \label{thm:exp}
Assume that $p > h$ ($p \geq h$ for $G$ simply connected), where $h$ is the Coxeter number of $G$.
Then there exists a unique isomorphism of varieties
$\log:G^u \rightarrow \g_\text{nilp}$,
whose inverse we denote by
$\exp:\g_\text{nilp} \rightarrow G^u$,
with the following properties:
\begin{itemize}
\item[(i)] $\log \circ \sigma  = d\sigma \circ \log$ for all $\sigma \in \Aut(G)$;
\item[(ii)] the restriction of $\log$ to $U$ is an isomorphism of algebraic groups
$U \rightarrow \Lie(U)$, whose tangent map is the identity; here the group law on
$\Lie(U)$ is given by the Hausdorff formula;
\item[(iii)] $\log(x_\alpha(a)) = aX_\alpha$ for every root $\alpha$ and $a \in k$,
where $X_\alpha = dx_\alpha(1)$.
\end{itemize}
\end{theorem}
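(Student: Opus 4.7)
The plan is to build $\log$ in two stages: first on the unipotent radical $U$ of the fixed Borel $B$, and then to extend it by equivariance to all of $G^u$. On $U$, the Lie algebra $\u=\Lie(U)$ is a nilpotent Lie algebra whose nilpotency class equals the length of the longest chain of positive roots, which for a connected reductive $G$ is bounded by $h-1$. The hypothesis $p>h$ (or $p\geq h$ in the simply connected case) thus ensures that all denominators occurring in the truncated Baker--Campbell--Hausdorff expression $X*Y = X+Y+\tfrac12[X,Y]+\cdots$ applied to elements of $\u$ are coprime to $p$. Consequently, $\u$ acquires an algebraic group structure via BCH that is unipotent of the same dimension as $U$.

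On root subgroups one is \emph{forced} by property (iii) to define $\exp(aX_\alpha) = x_\alpha(a)$. First I would check, using the Chevalley commutator formula together with the nilpotent BCH identities, that the resulting product map $\u \to U$ is well-defined (independent of any chosen ordering of the positive roots) and is a homomorphism from $(\u,*)$ to $U$. Since the tangent map at $0$ is the identity and both sides have the same dimension, this yields an isomorphism of algebraic groups, giving (ii). To extend $\exp$ to all of $\g_{\mathrm{nilp}}$, I would use the fact that every nilpotent element of $\g$ is $G$-conjugate into $\u$: for $X\in\g_{\mathrm{nilp}}$, pick $g\in G$ with $\Ad(g)X\in\u$, and set $\exp(X) = g^{-1}\exp(\Ad(g)X)g$. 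Independence of $g$ reduces to showing the restriction $\exp|_\u$ is $N_G(U)$-equivariant with respect to $\Ad$; this follows from the construction on root groups because conjugation by $T$ scales each $X_\alpha$ exactly as it scales $x_\alpha(a)$, while $U$-conjugation is compatible by the commutator formula and BCH. Property (i) for inner $\sigma$ is then immediate, and for outer automorphisms it follows because any graph automorphism permutes $\{X_\alpha\}$ compatibly with $\{x_\alpha\}$.

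Uniqueness is now easy: any map satisfying (i) and (iii) is determined on every root element, hence on $U$ by the Chevalley commutator/BCH translation, hence on $G^u$ by inner $G$-equivariance via (i). The main obstacle is the verification that BCH really defines a polynomial group law on $\u$ intertwining with multiplication on $U$; this is exactly where the numerical bound $p>h$ is used, since one must know that all nested brackets of length $\geq p$ in $\u$ vanish and that no denominator appearing up to length $h-1$ is divisible by $p$. The asymmetric bound for the simply connected case reflects that in that setting the center of $G$ does not interfere, so the required inversion of $(h-1)!$ is all that is needed; in the general reductive case an extra factor of $h$ can enter via the isogeny, forcing the strict inequality $p>h$.
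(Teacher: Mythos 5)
The paper offers no proof of this theorem; it is stated as a citation to Serre's 1998 Moursund lectures (\cite[Thm.~3]{Ser98}) and to Seitz (\cite[Prop.~5.2]{Sei00}). So strictly speaking there is no ``paper's proof'' to compare against; what you have written is an independent outline, and it is reasonable to assess it on its own terms.

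Your outline contains a genuine gap at the globalisation step. You define $\exp(X)=g^{-1}\exp(\Ad(g)X)g$ after choosing $g$ with $\Ad(g)X\in\u$, and you claim that independence of the choice of $g$ ``reduces to showing the restriction $\exp|_\u$ is $N_G(U)$-equivariant.'' That reduction is not correct. If $g_1,g_2$ both move $X$ into $\u$, set $h=g_2g_1^{-1}$ and $Y=\Ad(g_1)X$; then well-definedness requires $h\exp(Y)h^{-1}=\exp(\Ad(h)Y)$ for \emph{every} $h$ with $Y,\Ad(h)Y\in\u$, and such $h$ need not lie in $N_G(U)=B$. Equivalently, the transporter $\{h\in G : \Ad(h)Y\in\u\}$ is a union of double cosets of $B$, not a subgroup, so equivariance under the normaliser alone does not suffice. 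Repairing this requires something extra: for instance, after a Bruhat decomposition $h=b_1\dot w b_2$ one must verify $\dot w$-equivariance for those $Y$ supported on the roots sent to positive roots by $w$, and then patch; alternatively, one can sidestep the issue by using the regular nilpotent orbit to extend a $G$-equivariant morphism (the Springer-isomorphism approach). As written, the step simply does not go through.

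A secondary issue: your explanation of the asymmetric bound ($p\ge h$ for simply connected $G$ versus $p>h$ otherwise) is heuristic. The BCH argument alone gives $p>h-1$, i.e.\ $p\geq h$, since the nilpotency class of $\u$ is $h-1$; the extra unit in the general reductive case does not come from ``inverting an extra factor of $h$ via the isogeny'' in any direct sense. The actual source of the sharpened hypothesis lies in the representation-theoretic input used to prove uniqueness and compatibility (see Seitz's proof), and your sentence does not supply an argument. This is a smaller flaw than the globalisation gap, but it should not be asserted without justification.
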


The uniqueness implies that for $G = \GL(V)$, $p\geq \dim V$,
$\exp$ and $\log$ are the usual truncated series.

Recall (cf.\ \cite{Ser98}) that for a $G$-module $V$, the number $n(V)$ is defined as 
$n(V) = \sup_\lambda n(\lambda)$, where $\lambda$ ranges over all $T$-weights of $V$,
and where $n(\lambda) = \sum_{\alpha \in R^+} \langle \lambda, \alpha^\vee \rangle$. 
For the adjoint module $\g$, one obtains $n(\g) = 2h-2$.

\begin{prop} \label{prop:expad}
Let $\rho:G \rightarrow \GL(V)$ be a rational representation of $G$.
Suppose that $p >h$ and $p>n(V)$. Let $x \in \g$ be a nilpotent element.
Then
\begin{align*}
\rho(\exp_G x) = \exp_\GL ( d\rho(x)).
\end{align*}
In particular, if $p > 2h-2$, then 
$\Ad(\exp_G x) = \exp_\GL(\ad(x))$. 
\end{prop}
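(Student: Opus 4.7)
The plan is to reduce the identity to the case of root subgroups using $G$-equivariance and the structure of maximal unipotent subgroups. The hypothesis $p > n(V)$ ensures that $\exp_\GL$ is a well-defined isomorphism of varieties $\gl(V)_\text{nilp}\to \GL(V)^u$ given by truncated power series, with inverse $\log_\GL$ enjoying the analogue of property (i) of Theorem \ref{thm:exp}. Consequently the assertion is equivalent to the equality of the two regular maps
\[
\psi := d\rho \circ \log_G, \qquad \psi' := \log_\GL \circ\, \rho
\]
from $G^u$ to $\gl(V)$.

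Both $\psi$ and $\psi'$ are equivariant under conjugation by $G$ on the source and by $\Ad\circ\rho$ on the target --- for $\psi$ by combining property (i) of Theorem \ref{thm:exp} applied to $\sigma = \Int(g)$ with $d\rho \circ \Ad(g) = \Ad(\rho(g)) \circ d\rho$, and for $\psi'$ by the analogous property of $\log_\GL$ together with $\rho \circ \Int(g) = \Int(\rho(g)) \circ \rho$. Every unipotent element of $G$ is $G$-conjugate into a fixed maximal unipotent subgroup $U$, so it suffices to check $\psi = \psi'$ on $U$.

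On a root subgroup $U_\alpha = x_\alpha(\Ga)$, property (iii) yields $\psi(x_\alpha(a)) = a\,d\rho(X_\alpha)$. The composite $\rho\circ x_\alpha : \Ga \to \GL(V)$ is a rational $\Ga$-representation whose expansion on any $T$-weight vector $v_\lambda$ is $\sum_{i \geq 0}a^i c_i(v_\lambda)$ with $c_i(v_\lambda)\in V_{\lambda+i\alpha}$, truncating at the top $q$ of the $\alpha$-string through $\lambda$. The key observation is that $q \leq n(V)$: this is the standard degree estimate bounding $\alpha$-string lengths by pairings of weights with positive coroots (the top of the string is itself a weight, and taking a $W$-translate into the dominant chamber converts that pairing into a summand of some $n(\mu)$). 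Since $p > n(V)$, the classification of $\Ga$-modules of polynomial degree strictly less than $p$ forces $c_i = d\rho(X_\alpha)^i/i!$, so $\rho(x_\alpha(a)) = \exp_\GL(a\,d\rho(X_\alpha))$ and thus $\psi'(x_\alpha(a)) = a\,d\rho(X_\alpha) = \psi(x_\alpha(a))$.

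To propagate from root subgroups to all of $U$, I would use the Hausdorff formula. Since $p > h$, the product $*_H$ is defined with invertible denominators on both $\Lie(U)$ and the nilpotent subalgebra $d\rho(\Lie(U)) \subseteq \gl(V)$; property (ii) gives $(\Lie(U), *_H) \cong U$ under $\exp_G$, while $\exp_\GL$ gives $(d\rho(\Lie(U)), *_H) \cong$ its unipotent image in $\GL(V)$. The Lie algebra homomorphism $d\rho$ commutes with $*_H$ (Hausdorff being a bracket polynomial), so writing each $u \in U$ as a finite Hausdorff product of root subgroup elements and invoking the preceding paragraph on each factor yields $\psi = \psi'$ on $U$. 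The main technical obstacle is exactly the $n(V)$-degree bound used above --- checking that the truncation of $\rho(x_\alpha(a))$ in $a$ is controlled by $n(V)$ and not merely by some ad hoc invariant of $V$ --- after which the propagation via Hausdorff is formal. The ``in particular'' statement follows by specialising to $\rho = \Ad$: then $n(\g) = 2h-2$, so the single inequality $p > 2h-2$ implies both $p > h$ and $p > n(\g)$.
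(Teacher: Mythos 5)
Your proof takes a genuinely different, more self-contained route than the paper, which simply cites Serre's Theorem 5 from the Moursund lectures to obtain the degree bound $\deg_t \rho(\exp_G(tx)) \le n(V) < p$ and then quotes the classification of degree-$<p$ representations of $\Ga$. You instead reduce to a maximal unipotent $U$ by $G$-equivariance, check the identity by hand on root subgroups, and propagate via the Hausdorff formula --- a hands-on re-derivation at the cost of supplying the degree and nilpotency estimates yourself.

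There is, however, a real gap at the outset. The assertion that $p > n(V)$ makes $\exp_\GL$ a well-defined isomorphism of varieties $\gl(V)_{\text{nilp}} \to \GL(V)^u$ is false: that would require $p \ge \dim V$, and $n(V)$ can be far smaller (for $\SL_n$ acting on its adjoint module, $n(V)=2n-2$ while $\dim V = n^2-1$; with $n=3$, $p=5$ the hypotheses of the proposition hold but $\exp_\GL$ on $\gl_8$ needs $p\ge 8$). What you actually need --- both to make $\psi' = \log_\GL\circ\rho$ well-defined and to justify the Hausdorff law $\exp_\GL(X)\exp_\GL(Y)=\exp_\GL(X *_H Y)$ inside $d\rho(\Lie(U))$ --- is that every associative product of $n(V)+1$ elements of $d\rho(\Lie(U))$ vanishes. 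This does follow from the same weight considerations you use for the root-subgroup degree bound: such a product raises the $2\rho^\vee$-weight by at least $2(n(V)+1)$, pushing it out of the interval $[-n(V),n(V)]$ containing all $2\rho^\vee$-weights of $V$. But it must be stated explicitly; without it, neither $\psi'$ nor the propagation step is justified. (A smaller point of the same flavour: your bound $q\le n(V)$ on the string length uses the two-sided estimate $-n(V)\le\langle\lambda,\alpha^\vee\rangle\le n(V)$, applied to the bottom and top of the string, and you only articulate the upper half.)
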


\begin{proof}
Consider the homomorphism $\varphi: \Ga \rightarrow \GL(V)$
given by $\varphi(t) = \rho(\exp_G(t.x))$.
Under our assumptions, it follows  from \cite[Thm.\ 5]{Ser98}
that $\varphi$ is a morphism of degree $<p$,
(i.e.\ the matrix entries
of $\varphi$ are polynomials of degree less than $p$ in $t$).
Moreover, $d \varphi(1) = d\rho(x)$.
By \cite[\S 4]{Ser94s}, this implies that $d \rho(x)^p = 0$ and
that $\varphi$ agrees with the homomorphism
$t \mapsto \exp_\GL(t.d\rho(x))$.
The claim follows.
\end{proof}

\begin{lemma}\label{exp} Let $X\in\gl(V)$ be a nilpotent element 
satisfying $X^n=0$ for some integer $n\leq p$.
Let $l,r \in \End(\gl(V))$ be left multiplication with $X$,
respectively right multiplication with $-X$.
Set $W = W_p(l,r) \in \End(\gl(V))$, where $W_p(x,y)$ is the 
the image of $\frac{1}{p}((x+y)^p-x^p-y^p) \in \Z[x,y]$ in
$k[x,y]$. Let $\h$ be a subset of $\gl(V)$ normalised (resp.\ centralised) by $X$.
Suppose that $\h \subseteq \ker(W)$.
Then $\exp(X) \in \GL(V)$ normalises (resp.\ centralises) $\h$.

In particular, if $p \geq 2n-1$, then $W=0$ and so $\exp(X)$ normalises (resp.\ centralises)
every subspace that is normalised (resp.\ centralised) by $X$.
\end{lemma}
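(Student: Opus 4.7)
The plan is to deduce the lemma from a single operator identity on $\gl(V)$ comparing $\Ad(\exp X)$ with $\exp(\ad X)$, the deviation being governed by $W$. The operators $l,r$ on $\gl(V)$ commute and both satisfy $l^p=r^p=0$ (since $X^n=0$ with $n\leq p$ gives $X^p=0$). Consequently the truncated exponentials $\exp(l),\exp(r),\exp(l+r)=\exp(\ad X)$ all make sense in $\End(\gl(V))$, and $\exp(X)=\sum_{i=0}^{n-1}X^i/i!\in\GL(V)$. A direct expansion of $\exp(X)Y\exp(-X)$ gives $\Ad(\exp X)=\exp(l)\exp(r)$. A further easy observation is that $W=W_p(l,r)$ squares to zero on $\gl(V)$: composing $W\circ W$ produces monomial summands $l^{i+j}r^{2p-i-j}$ with $1\leq i,j\leq p-1$, so $i+j<p$ forces $2p-i-j>p$, making one of the outer exponents at least $p$; hence $\exp(-W)=I-W$.

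The central identity I would prove is
\[
\Ad(\exp X)\;=\;\exp(\ad X)\circ(I-W),
\]
equivalently $\exp(l)\exp(r)=\exp(l+r)\exp(-W_p(l,r))$ in any commutative $\F_p$-algebra with $l^p=r^p=0$. I would verify this by matching coefficients of $l^a r^b$ on both sides. For $a+b<p$ the equality is trivial, while for $p\leq a+b\leq 2p-2$ it reduces (using $W^2=0$ and the commutativity of $W$ with $\exp(\ad X)$, both being polynomials in the commuting operators $l,r$) to a combinatorial congruence modulo $p$, which follows from Wilson's theorem in the form $\tfrac{1}{i!(p-i)!}\equiv\tfrac{(-1)^i}{i}\pmod{p}$ after a partial-fractions rearrangement.

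Granted the identity, the conclusion is immediate. For $h\in\h\subseteq\ker W$ one has $\Ad(\exp X)(h)=\exp(\ad X)(h-W(h))=\exp(\ad X)(h)$. Under the hypothesis that $X$ normalises $\h$ (taken as a subspace, as in the intended applications), $(\ad X)^k(h)\in\h$ for every $k$, so $\exp(\ad X)(h)\in\h$; in the centralising case $(\ad X)(h)=0$, whence $\exp(\ad X)(h)=h$. For the final assertion, if $p\geq 2n-1$ then any summand $\tfrac{1}{i}X^iYX^{p-i}$ of $W(Y)$ with $1\leq i\leq p-1$ satisfies either $i\geq n$ or $p-i\geq n$ (since $i+(p-i)=p\geq 2n-1>2(n-1)$), and so vanishes by $X^n=0$; hence $W=0$ identically, and the hypothesis becomes vacuous. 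The main obstacle I foresee is proving the operator identity itself cleanly; direct coefficient-matching works but is tedious, and a conceptual derivation (via Artin--Hasse or a careful BCH-style lift to $\Z_{(p)}$ tracking $p$-valuations) would be preferable.
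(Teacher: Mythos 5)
Your argument follows essentially the same route as the paper's, and the structure is correct: identify $\Ad(\exp X)=\exp(l)\exp(r)$, use the Serre-type BCH correction $\exp(l)\exp(r)=\exp(l+r-W_p(l,r))$ (equivalently $\exp(l+r)\exp(-W)$, thanks to your correct observation that $W^2=0$), note that on $\ker W$ this correction disappears so that $\Ad(\exp X)$ agrees with $\exp(\ad X)$, and close by an easy induction of powers of $\ad X$ staying in $\h$ (resp.\ killing $\h$). Your deduction of $W=0$ when $p\geq 2n-1$ from $i+(p-i)=p$ forcing $i\geq n$ or $p-i\geq n$ is exactly the paper's, stated a shade more explicitly. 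The only substantive deviation is that the paper simply cites Serre \cite[(4.1.7)]{Ser94s} for the identity $\exp(x)\exp(y)=\exp(x+y-W_p(x,y))$ in a commutative ring with $x^p=y^p=0$, whereas you propose to reprove it by direct coefficient-matching via Wilson's theorem. You flag this yourself as the main obstacle; the sketch is plausible but not carried out, and it is genuinely the delicate part. If you do not want to cite Serre, a cleaner route than raw coefficient matching is to observe that $t\mapsto\exp(tl)\exp(tr)$ is a homomorphism $\Ga\to\GL(\gl(V))$ of degree $<p$ and invoke the classification of such in \cite[\S4]{Ser94s}, which is precisely how Serre organises the computation. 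One small presentational point: your reformulation $\Ad(\exp X)=\exp(\ad X)\circ(I-W)$ relies on $\exp(l+r-W)=\exp(l+r)\exp(-W)$, which holds but requires the additional (true) check that $W(l+r)^{p-1}=0$ modulo $l^p,r^p$; the paper avoids this by working directly with $(l+r-W)^m(Y)=(l+r)^m(Y)$ for $Y\in\ker W$, which sidesteps any comparison of truncated exponentials.
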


\begin{proof}
Since the nilpotence degree of $X$ is less than $p$,
the exponential $\exp(X)=1+X+X^2/2+\dots$ gives a well-defined element of $\GL(V)$. 
Moreover, for each $Y \in \h$ we have the equality 
$$\Ad(\exp(X))(Y) \kern-1pt = \kern-1pt \exp(\ad(X))(Y) \kern-1pt = \kern-1pt Y+\ad(X)(Y) +\ad(X)^2(Y)/2 +\dots \in \gl(V).$$ 

Indeed, we have $\ad(X) = l + r$, 
and $\Ad(\exp(X)) = \exp(l)\exp(r)$. 
Now by \cite[(4.1.7)]{Ser94s}, $\exp(l)\exp(r) = \exp(l+r-W)$.
Since $l$ and $r$ commute with $W$, we deduce
$(l + r - W)^m (Y) = (l+r)^m(Y)$ for each $m \geq 0$.
Thus
$\Ad(\exp(X))(Y) = \exp(l+r)(Y) = \exp(\ad(X))(Y)$, as claimed.
Hence $\exp(X)$ is contained in $N_{\GL(V)}(\h)$ whenever
$X\in\n_{\gl(V)}(\h)$ and $\exp(X)\in C_{\GL(V)}(\h)$ whenever $X\in\c_{\gl(V)}(\h)$.

Moreover, $W_p(l,r) = \sum_{i=1}^{p-1} c_i l^i r^{p-i}$ for certain non-zero
coefficients $c_i \in k$. In particular, this expression vanishes for $p \geq 2n-1$.
\end{proof}

\begin{cor}\label{expParab}
Let $\p = \q + \l \subseteq \gl(V)$ be a parabolic subalgebra,
and suppose that $p \geq \dim V$.
If $X \in \q$ normalises a subset $\h \subseteq \p$,
then so does $\exp(X)$.
\end{cor}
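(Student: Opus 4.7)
The plan is to reduce the claim directly to Lemma \ref{exp} by verifying its hypothesis $\h\subseteq \ker(W)$. Since $X\in\q$ is nilpotent and $p\geq\dim V$, we automatically have $X^p=0$, but this alone only gives $n\leq p$, which is weaker than the $p\geq 2n-1$ needed to apply the ``easy'' version of the lemma. Hence I will actually have to exploit the parabolic structure.

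First I would fix a flag $0=V_0\subsetneq V_1\subsetneq\cdots\subsetneq V_k=V$ whose stabiliser in $\gl(V)$ is $\p$; then $\l$ preserves each $V_j$ and $\q$ consists of the operators with $\q\cdot V_j\subseteq V_{j-1}$. In particular, $k\leq\dim V\leq p$. For $X\in\q$ and $Y\in\p$, I would iterate these containments to obtain
\begin{equation*}
X^a Y X^b(V)\ \subseteq\ X^a Y(V_{k-b})\ \subseteq\ X^a(V_{k-b})\ \subseteq\ V_{k-a-b},
\end{equation*}
with the convention $V_j=0$ for $j\leq 0$. Thus whenever $a+b\geq k$ the composite $X^aYX^b$ vanishes on $V$, i.e.\ is zero in $\gl(V)$.

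Next I would unpack $W=W_p(l,r)=\sum_{i=1}^{p-1}c_i\,l^i r^{p-i}$ from Lemma \ref{exp}, noting that $l^i r^{p-i}(Y)=(-1)^{p-i}X^i Y X^{p-i}$. For any $Y\in\p$ the exponents satisfy $i+(p-i)=p\geq\dim V\geq k$, so by the computation of the previous step every summand is zero on $Y$. Therefore $W$ annihilates all of $\p$, and in particular the given subset $\h\subseteq\p$ satisfies $\h\subseteq\ker(W)$.

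Finally I would invoke Lemma \ref{exp}: since $X$ normalises $\h$ and $\h\subseteq\ker(W)$, we conclude that $\exp(X)\in\GL(V)$ normalises $\h$, as desired. There is no serious obstacle here; the only thing to watch is the correct identification of the ``flag length'' $k$ with a quantity bounded by $\dim V$, which is what makes the hypothesis $p\geq\dim V$ (rather than the stronger $p\geq 2\dim V-1$) sufficient.
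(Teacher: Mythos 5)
Your proof is correct and takes essentially the same approach as the paper: fix a flag defining $\p$, note that $p$ is at least the flag length so that any word of length $p$ in elements of $\p$ with all but one letter from $\q$ vanishes, deduce $\p\subseteq\ker(W)$, and invoke Lemma~\ref{exp}. Your version is slightly more explicit about the index computation (and about taking the flag strict so that its length is bounded by $\dim V$), but the argument is the same.
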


\begin{proof}
By Lemma \ref{exp}, it suffices to show that $\p \subseteq \ker(W)$.
Let $0 = V_0 \subseteq V_1 \subseteq \dots \subseteq V_m = V$ be a flag with
the property
\begin{align*}
\p &= \{ Y \in \gl(V) \mid Y V_i \subseteq  V_i \} \\
\q &= \{ Y \in \gl(V) \mid Y V_i \subseteq  V_{i-1} \}.
\end{align*}
By assumption, we have $p \geq m$,
and therefore all products $X_1 \dots X_{p+1}$ with all $X_i \in \p$
and all but one $X_i \in \q$ vanish on $V$. 
In particular $l^ir^{p-i}(Y)=0$ for all $Y \in \p$ and hence $W(Y)=0$.  
\end{proof}

\begin{lemma}\label{genbynilp}Suppose $\g$ is a subalgebra of $\gl(V)$ generated as a $k$-Lie algebra by a set of nilpotent elements $\{X_i\}$ of nilpotence degree less than $p$, and let $G=\overline{\langle\exp(t.X_i)\rangle}$ be the closed subgroup of $\GL(V)$ generated by $\exp(t.X_i)$ for each $t\in k$. Then $\g\leq\Lie(G)$.\end{lemma}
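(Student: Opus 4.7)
The plan is to show that each generator $X_i$ lies in $\Lie(G)$ and then invoke the fact that $\Lie(G)$ is automatically closed under the Lie bracket of $\gl(V)$. Since $\g$ is Lie-generated by the $X_i$, this will give the desired inclusion.

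First I would check that for each $i$ the assignment $\varphi_i:\Ga\to\GL(V)$, $t\mapsto\exp(tX_i)$, is a morphism of algebraic groups. Since $X_i^{n_i}=0$ for some $n_i<p$, the sum $\exp(tX_i)=\sum_{j=0}^{n_i-1}t^j X_i^j/j!$ is a polynomial in $t$ of degree $<p$, so the matrix entries of $\varphi_i(t)$ are polynomials in $t$. Because $tX_i$ and $sX_i$ commute and the truncation produces no cross terms of $X_i$-degree $\geq p$, the identity $\exp(tX_i)\exp(sX_i)=\exp((t+s)X_i)$ holds as a polynomial equality in $s,t$; this is precisely what makes $\varphi_i$ a homomorphism of algebraic groups rather than a merely set-theoretic assignment. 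Differentiating at the origin, $d\varphi_i(1)=X_i$.

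By the definition of $G$ as the closed subgroup of $\GL(V)$ generated by all the elements $\exp(tX_i)$, the image $\varphi_i(\Ga)$ lies in $G$, so $\varphi_i$ factors through the closed immersion $G\hookrightarrow\GL(V)$. Passing to Lie algebras then places $X_i=d\varphi_i(1)$ inside $\Lie(G)\subseteq\gl(V)$ for every $i$.

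Finally, for any closed subgroup scheme $H\leq\GL(V)$, $\Lie(H)$ is a Lie subalgebra of $\gl(V)$ under the commutator bracket (this is immediate from the functorial description of the Lie bracket via $k[\varepsilon]/(\varepsilon^2)$-points of $H$). Since $\{X_i\}\subseteq\Lie(G)$ and $\g$ is generated by the $X_i$ as a Lie algebra, we conclude $\g\subseteq\Lie(G)$. There is no serious obstacle: the only delicate point is confirming that each $\varphi_i$ is genuinely a morphism of algebraic groups, and this is exactly where the hypothesis that each $X_i$ has nilpotence degree less than $p$ is used.
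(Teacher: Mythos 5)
Your proof is correct and follows essentially the same route as the paper: identify each $X_i$ as $\tfrac{d}{dt}\big|_{t=0}\exp(tX_i)\in\Lie(G)$ and then use that $\Lie(G)$ is a Lie subalgebra together with the generation hypothesis. You simply spell out in more detail why $t\mapsto\exp(tX_i)$ is a genuine homomorphism $\Ga\to\GL(V)$ (the truncated binomial identity relies on $n_i<p$), a point the paper leaves implicit.
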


\begin{proof} Since $\Lie(G)$ contains the element $d/dt \exp(t.X_i)|_{t=0}$ it contains each element $X_i$. Since $\g$ is generated by the elements $X_i$, we are done.\end{proof}

\begin{proof}[Proof of Theorem B(ii)]
Let $\h$ be a subspace of $\g$ and let $\n = \n_\g(\h)$ be
the Lie-theoretic normaliser of $\h$ in $\g$.

Let $\{x_1,\dots,x_r\}$ be a set of nilpotent elements generating $\n$.
To show that $N_G(\h)$ is smooth, it suffices to show that each 
$x_i$ belongs to the Lie algebra of $N_G(\h)_\red$.

But for a nilpotent generator $x_i$, we may consider the smooth closed subgroup
$M_i = \overline{ \langle \exp(t.x_i) \mid t \in k \rangle}$ of $G$.
By Proposition \ref{prop:expad}, $M_i \subseteq N_G(\h)_\red$ 
and hence $x_i \in \Lie(M_i) \subseteq \Lie(N_G(\h)_\red)$, as required. 
\end{proof}

\section{Reductive pairs: Proof of Proposition E}\label{sec:reducpairs}

The following definition is due to Richardson \cite{Ric67}.
\begin{defn}
Suppose that $(G',G)$ is a pair of reductive algebraic groups such that $G \subseteq G'$ is a closed
subgroup.
Let $\g'=\Lie(G)$, $\g=\Lie(G)$.
We say that $(G',G)$ is a \emph{reductive pair} 
provided there is a subspace $\m \subseteq \g'$ such that $\g'$ decomposes as a $G$-module into a direct sum $\g'=\g \oplus \m$.
\end{defn}

With $p$ sufficiently large, reductive pairs are easy to find.

\begin{lemma}[\!\!{\cite[Thm. 3.1]{BHMR11}}]
\label{lem:redpair}
Suppose $p>2\dim V-2$ and $G$ is a connected reductive subgroup of $\GL(V)$. Then $(\GL(V),G)$ is a reductive pair.\end{lemma}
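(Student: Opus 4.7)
The plan is to prove the stronger statement that $\gl(V) = \Lie(\GL(V))$ is semisimple as a rational $G$-module under the given hypothesis. Any $G$-submodule, in particular $\g = \Lie(G)$, then admits a $G$-stable complement $\m$, exhibiting $(\GL(V),G)$ as a reductive pair.

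First I would identify $\gl(V)$ with $V \otimes V^*$ as a rational $G$-module, where $G$ acts diagonally via its natural and contragredient actions on the two factors; this matches the adjoint action on $\gl(V)$. Next, observe that $V$, and hence dually $V^*$, is semisimple as a rational $G$-module. Indeed, the hypothesis $p > 2\dim V - 2$ forces $p \geq 2\dim V - 1$, so $\dim V \leq p$ (trivially when $\dim V = 1$, and otherwise because $p \geq 2\dim V - 1 > \dim V$). Jantzen's theorem from \cite{Jan97}, recalled in the Introduction, then yields the semisimplicity of $V$.

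Finally I would apply Serre's theorem from \cite{Ser94s}, also recalled in the Introduction. Since $V$ and $V^*$ are semisimple as modules for the abstract group $G(k)$ and satisfy
\[
\dim V + \dim V^* = 2\dim V \leq p+1 < p+2,
\]
the tensor product $V \otimes V^* \cong \gl(V)$ is semisimple as a $G(k)$-module. Because $G$ is connected and $k$ is algebraically closed, $G(k)$ is Zariski dense in $G$, so any $G(k)$-stable subspace of a rational $G$-module is automatically $G$-stable (being the closed preimage in $G$ of a closed subspace); hence $\gl(V)$ is semisimple as a rational $G$-module, and the required complement $\m$ to $\g$ exists.

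The main obstacle is conceptual rather than computational: one must notice that the precise numerical hypothesis $p > 2\dim V - 2$ is exactly calibrated to feed into Serre's theorem (applied to $V$ and $V^*$), while the weaker Jantzen input $\dim V \leq p$ is absorbed with room to spare; and one must verify the routine but easy-to-overlook point that passing between the abstract group $G(k)$ and the algebraic group $G$ creates no additional content thanks to Zariski density.
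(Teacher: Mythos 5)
Your proof is correct, and it recovers exactly the argument behind the cited reference \cite[Thm.\ 3.1]{BHMR11} (the paper itself gives no proof, deferring entirely to that citation): one shows $\gl(V)\cong V\otimes V^*$ is a semisimple $G$-module by combining Jantzen's semisimplicity criterion for $V$ with Serre's tensor-product theorem, whose hypothesis $\dim V + \dim V^* < p+2$ is precisely the stated bound $p>2\dim V - 2$. The Zariski-density point you raise for passing between $G(k)$-modules and rational $G$-modules is indeed the correct way to reconcile the abstract-group setting of Serre's theorem with the algebraic-group conclusion.
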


We need a compatibility result for normalisers of subgroup schemes of height one.

\begin{lemma}\label{ngh}
Let $H \subseteq G$ be a closed subgroup scheme of height one, with $\h = \Lie(H)$.
Then $N_G(H) = N_G(\h)$ (scheme-theoretic normalisers).
\end{lemma}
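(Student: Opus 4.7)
The plan is to invoke the equivalence of categories between height one subgroup schemes of $G$ and restricted Lie subalgebras of $\g=\Lie(G)$, recalled in the preliminaries, and to check that this equivalence is equivariant for conjugation, from which the equality of functors follows by unwinding definitions. Let me elaborate.

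First, I would fix a $k$-algebra $A$ and check the equality $N_G(H)(A)=N_G(\h)(A)$. For $g\in G(A)$, conjugation $\Int(g)$ is an automorphism of the group scheme $G_A$; since inner automorphisms commute with the relative Frobenius, they preserve the property of being of height one. Thus $gH_Ag^{-1}$ is again a height one closed subgroup scheme of $G_A$. By functoriality of $\Lie(-)$ together with the identity $d\Int(g)=\Ad(g)$, its Lie algebra is $\Ad(g)(\h\otimes_k A)\subseteq \g\otimes_k A$. I would then verify that the correspondence between height one subgroup schemes and restricted subalgebras is natural in base change to arbitrary $k$-algebras (this is immediate from the usual description, e.g.\ via the restricted universal enveloping algebra, which is built from $\g$ by a construction commuting with base change). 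Consequently
\[
gH_Ag^{-1}=H_A\quad\Longleftrightarrow\quad \Ad(g)(\h\otimes_k A)=\h\otimes_k A.
\]

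Second, I would compare these two conditions with the membership conditions for the two functors. The left-hand side is exactly the assertion that $g\in N_G(H)(A)$: indeed, two closed subgroup schemes of $G_A$ coincide if and only if their $B$-points agree for every $A$-algebra $B$, which is what is demanded in the paper's definition of $N_G(H)$. The right-hand side is exactly the condition $g\in N_G(\h)(A)$, where $N_G(\h)(A)=\{g\in G(A):\Ad(g)(\h\otimes_k A)=\h\otimes_k A\}$; equality (as opposed to mere inclusion) is automatic, since $\Ad(g)$ is invertible and $\h$ is finite-dimensional. Assembling the equivalence over all $A$ gives $N_G(H)=N_G(\h)$ as subfunctors of $G$, hence as closed subgroup schemes.

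The main step to justify with care is the naturality of the height-one/restricted-subalgebra correspondence under arbitrary base change, and the compatibility of this correspondence with the adjoint action. Once these functorial properties are in hand, the whole lemma is a short diagram chase.
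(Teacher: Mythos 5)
Your argument is correct and amounts to the same approach as the paper's: both proofs hinge on the Demazure--Gabriel equivalence between height-one group schemes and restricted Lie algebras (the paper cites DG II, \S7, Thm.\ 3.5), applied over arbitrary $k$-algebras, together with the compatibility of that correspondence with conjugation/adjoint action. The paper phrases it at the level of Hom-sets --- lifting $\Ad(x)|_\h\in\Hom_{p\text{-Lie}}(\h,\h)$ to a morphism $H\to H$ via a commutative square and reading off that $\Int(x)$ preserves $H$ --- whereas you phrase it at the level of objects, matching $gH_Ag^{-1}$ with $\Ad(g)(\h\otimes_k A)$ under the equivalence; these are two presentations of the same argument, and the base-change point you flag as needing care is precisely what the paper dispatches with ``this works for points $x$ with values in any $k$-algebra.'' Your side remark that inclusion $\Ad(g)(\h_A)\subseteq\h_A$ automatically upgrades to equality does hold here, but for a slightly less obvious reason than ``$\Ad(g)$ is invertible and $\h$ is finite-dimensional'': over a general $k$-algebra $A$ an injective endomorphism of a finitely generated module need not be surjective, and what saves you is that $\h_A$ is a direct summand of $\g_A$, so that invertibility of $\Ad(g)$ forces the induced surjective endomorphism of $\g_A/\h_A$ to be an isomorphism (by the determinant trick), whence $\Ad(g)^{-1}(\h_A)\subseteq\h_A$ as well.
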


\begin{proof}
We have a commutative diagram 
$$
\begin{CD}
\Hom(H,H) @>>> \Hom_{p-\text{Lie}}(\h,\h) \\
@VVV @VVV \\
\Hom(H,G) @>>> \Hom_{p-\text{Lie}}(\h,\g),
\end{CD}
$$
where the horizontal arrows are given by differentiation and are bijective
(cf.\ \cite[II, \S7, Thm.\ 3.5]{DG:1970}).
Now if $x \in N_G(\h)$, the map $\Ad(x)_\h$ in the bottom right corner may be lifted via the top right corner
to a map in $\Hom(H,H)$. The commutativity of the diagram shows that 
conjugation by $x$ stabilises $H$,
and hence $x \in N_G(H)$. This works for points $x$ with values
in any $k$-algebra, and hence proves the containment
of subgroup schemes $N_G(\h) \subseteq N_G(H)$. The reverse inclusion is clear.
\end{proof}

We show that the smoothness of normalisers descends along reductive pairs. Let us restate and then prove Propostion E.

\begin{prop}\label{redpairnorm}
Let $(G',G)$ be a reductive pair and let $H \subseteq G$ be a closed subgroup scheme.
If $N_{G'}(H)$ is smooth, then so is $N_G(H)$.

In particular, if $\h \subseteq \g$ is a restricted subalgebra
and if $N_{G'}(\h)$ is smooth, then so is $N_G(\h)$.
\end{prop}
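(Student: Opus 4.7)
The plan is to adapt the centraliser argument of \cite{Her13} to normalisers; the key new ingredient is a direct-sum decomposition of the infinitesimal normaliser that exploits the reductive pair structure.

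First I would establish the Lie algebra identification
$$\Lie(N_{G'}(H)) \;=\; \Lie(N_G(H)) \,\oplus\, \bigl(\m \cap \Lie(C_{G'}(H))\bigr). \qquad (\ast)$$
Since $(G',G)$ is a reductive pair, $\g' = \g \oplus \m$ as $G$-modules, hence as $H$-modules (for the conjugation action of $H \subseteq G$). The inclusion $\h = \Lie(H) \subseteq \g$ forces $\h \cap \m = 0$. Given $x = x_\g + x_\m \in \Lie(N_{G'}(H))$, the condition that $\Ad(H)$ shifts $x$ by elements of $\h$ splits along the $G$-stable decomposition: in the $\m$-direction the shift lies in $\h \cap \m = 0$, so $x_\m$ is $H$-fixed, i.e.\ $x_\m \in \m \cap \Lie(C_{G'}(H))$; and in the $\g$-direction one recovers $x_\g \in \Lie(N_G(H))$.

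With $(\ast)$ in place I would reformulate the goal. Let $Z := N_{G'}(H)_\red \cap G$ be the scheme-theoretic intersection inside $G'$. Then $Z$ is a closed subscheme of $N_G(H)$, has the same $k$-points (both equal $N_{G(k)}(H(k))$), and by $(\ast)$ satisfies $\Lie(Z) = \g \cap \Lie(N_{G'}(H)) = \Lie(N_G(H))$. Hence smoothness of $N_G(H)$ is equivalent to smoothness of $Z$, i.e.\ to the equality $\dim N_G(H)_\red = \dim \Lie(N_G(H))$. To establish it, consider the smooth $G'$-variety $Y := G'/N_{G'}(H)_\red$ (well-defined by smoothness of $N_{G'}(H)$). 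The tangent space at the base point $[H]$ is $\g'/\Lie(N_{G'}(H))$, and by $(\ast)$ it carries the $G$-equivariant decomposition
$$T_{[H]}Y \;=\; \g/\Lie(N_G(H)) \,\oplus\, \m/\bigl(\m \cap \Lie(C_{G'}(H))\bigr),$$
whose first summand is the image of the differential of the $G$-orbit map. Adapting the centraliser argument of \cite{Her13}, the reductive pair condition lets one realise the second summand as a smooth transverse slice through $[H]$ (constructed group-theoretically via a suitable smooth subgroup of $C_{G'}(H)_\red$ mapping to $\m/(\m \cap \Lie(C_{G'}(H)))$), which forces separability of the $G$-orbit at $[H]$ and hence smoothness of its stabiliser $N_G(H)$.

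The main obstacle will be the last step: in positive characteristic the $G$-stable tangent-space complement does not automatically integrate, so one cannot appeal to exponentials, and the smooth transverse slice must be produced as an actual subscheme of $Y$ using the reductive pair structure, closely mirroring Herpel's argument for centralisers. The ``in particular'' clause is then immediate from Lemma \ref{ngh}: a restricted subalgebra $\h \subseteq \g$ determines a unique height one subgroup scheme $H \leq G$ (also a subgroup scheme of $G'$) with $\Lie(H) = \h$, and for this $H$ one has $N_G(H) = N_G(\h)$ and $N_{G'}(H) = N_{G'}(\h)$, so the general statement applies.
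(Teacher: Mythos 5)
Your decomposition $(\ast)$ is correct and, once one notes $\m\cap\Lie(C_{G'}(H))=\m^H$, it is precisely the dimension identity that the paper extracts from \cite[II, \S5, Lem.\ 5.7]{DG:1970}: $\dim\Lie(N_{G'}(H))=\dim\Lie(N_G(H))+\dim\m^H$. Your observations about $Z=N_{G'}(H)_\red\cap G$ (same $k$-points and same Lie algebra as $N_G(H)$, hence smoothness of one is equivalent to smoothness of the other) are also fine. Up to this point you have the same estimate as the paper, namely that it suffices to show $\dim N_{G'}(H)-\dim N_G(H)\leq\dim\m^H$.

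The trouble is in the closing step, which you explicitly flag as the obstacle and do not carry out. You propose to produce a smooth transverse slice inside $Y=G'/N_{G'}(H)_\red$ through the base point $[H]$ ``via a suitable smooth subgroup of $C_{G'}(H)_\red$''. But any subgroup of $C_{G'}(H)_\red$ is contained in $N_{G'}(H)_\red$ and therefore \emph{fixes} $[H]$: its orbit through $[H]$ is the point itself and the differential of its orbit map to $Y$ is zero. So this construction cannot supply a subscheme of $Y$ with tangent space $\m/\m^H$, and it is not clear what would replace it. Moreover, even granting a transverse slice, one would still need an argument that it forces separability of the $G$-orbit map; as you note, in positive characteristic this does not come for free.

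The paper closes the gap by a different and more direct device. Following \cite[Lem.\ 3.6]{Her13}, one exhibits a monomorphism of quotient schemes
$$N_{G'}(H)/N_G(H)\;\hookrightarrow\;(G'/G)^H,$$
given by $n\mapsto nG$ (this lands in the $H$-fixed locus because $n$ normalises $H$, and the kernel is exactly $N_{G'}(H)\cap G=N_G(H)$). Since $T_{\bar e}(G'/G)\cong\m$ as an $H$-module, the tangent space of the fixed-point scheme at $\bar e$ is $\m^H$, so
$$\dim N_{G'}(H)-\dim N_G(H)=\dim N_{G'}(H)/N_G(H)\leq\dim(G'/G)^H\leq\dim\m^H,$$
which combined with your $(\ast)$ and the smoothness of $N_{G'}(H)$ yields smoothness of $N_G(H)$. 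I would suggest replacing the unfinished slice argument with this monomorphism; it avoids any appeal to separability of orbit maps or to integrating tangent-space complements. Your handling of the ``in particular'' clause via Lemma~\ref{ngh} is exactly right.
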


\begin{proof}
The last assertion follows from Lemma \ref{ngh}.

Let $H \subseteq G$ be a closed subgroup scheme.
We follow the proof of \cite[Lem.\ 3.6]{Her13}.
Let $\g' = \g \oplus \m$ be a decomposition of $G$-modules.

By \cite[II, \S5, Lem.\ 5.7]{DG:1970}, we have
\begin{align*}
\dim \Lie(N_{G'}(H)) &= \dim \h + \dim (\g'/\h)^H 
= \dim \h + \dim (\g/\h)^H + \dim \m^H \\
&= \dim \Lie(N_G(H)) + \dim \m^H \geq \dim N_G(H) + \dim \m^H.
\end{align*}

On the left hand side, as $N_{G'}(H)$ is smooth by assumption,
we have $\dim N_{G'}(H) = \dim \Lie(N_{G'}(H))$. 
Thus to show that $N_G(H)$ is smooth,  
it suffices to show that $\dim N_{G'}(H) - \dim N_G(H) \leq \dim \m^{H}$.

Now as in \cite[Lem.\ 3.6]{Her13}, one shows that there is a monomorphism of quotient schemes
$N_{G'}(H)/N_G(H) \hookrightarrow (G'/G)^H$,
and that the tangent space on the right hand side identifies
as $T_{\bar e}(G'/G)^H \cong \m^H$.
The claim follows.
\end{proof}

\section{Lifting of normalising tori and the proof of Theorem A}\label{sec:aii}

In this section we let $G=\GL(V)$ and $\h$ be a subspace of $\g$. We would like to lift a normaliser $\n_\g(\h)$ to a subgroup $N$ normalising $\h$ such that $\Lie(N)=\n_\g(\h)$. It turns out that the hardest part of this is to find a lift of a maximal torus normalising $\h$. This is the content of the next lemma.

\begin{lemma} \label{findingtorus}
Let $G=\GL_n$ with $p>2^{2n}$ and let 
$\h \subseteq \g$ be any subspace of $\g=\Lie(G)$.
Suppose that $\c \subseteq \g$ is a torus normalising $\h$.
Then $\c = \Lie(C)$ for a torus $C \subseteq N_G(\h)$.
\end{lemma}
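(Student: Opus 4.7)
The plan is to construct the lift $C$ explicitly via Galois descent. First I simultaneously diagonalise $\c$: since $\c$ is abelian with all elements semisimple, after conjugating by some $g \in \GL_n$ I may assume $\c \subseteq \t := \Lie(T)$, where $T \cong \Gm^n$ is the diagonal maximal torus; the normalising hypothesis is preserved by simultaneously replacing $\h$ by $g \h g^{-1}$.

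The $p$-subalgebra hypothesis becomes crucial now: identifying $\t \cong k^n$ via the standard cocharacters, the restricted $[p]$-operation is the componentwise Frobenius $\Phi \colon (a_1, \ldots, a_n) \mapsto (a_1^p, \ldots, a_n^p)$, so $\c$ is $\Phi$-stable. The identity $\det(A^{[p]}) = \det(A)^p$ (immediate from $(\sum x_i)^p = \sum x_i^p$ in characteristic $p$) shows that componentwise Frobenius preserves the rank of matrices, hence sends any $k$-basis of $\c$ to another $k$-basis; so $\Phi|_\c$ is an $\F_p$-linear bijection and additive Hilbert~90 yields $\c = W \otimes_{\F_p} k$ for the $\F_p$-subspace $W := \c^\Phi = \c \cap \F_p^n$ of dimension $d = \dim_k \c$. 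I then lift $W$ to a pure sublattice $M \subseteq \Z^n$ of rank $d$ by lifting an $\F_p$-basis of $W$ entrywise to integers in $[0,p-1]$ and taking the pure hull of the resulting $\Z$-span; this gives $M/pM = W$ compatibly with $\F_p^n \hookrightarrow k^n$. Setting $C \subseteq T$ to be the subtorus with cocharacter lattice $Y(C) = M$, I obtain $\Lie(C) = M \otimes_\Z k = W \otimes_{\F_p} k = \c$.

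Finally I verify $C \subseteq N_G(\h)$ by comparing weight decompositions. On the standard module $k^n$, two $C$-weights $\epsilon_i|_C$ and $\epsilon_j|_C$ coincide iff $m_i = m_j$ for all $m \in M$, while the $\c$-weights $\lambda_i, \lambda_j$ agree iff $c_i = c_j$ for all $c \in \c = M \otimes_\Z k$; these conditions are equivalent. The analogous equivalence for weight differences $\epsilon_i - \epsilon_j$ on $\gl_n = k^n \otimes (k^n)^*$ shows the $C$- and $\c$-weight space decompositions of $\gl_n$ coincide, so every $\c$-stable subspace is $C$-stable; in particular $C$ normalises $\h$. The main technical content is the additive descent step, which I expect to be the principal obstacle if one avoids identifying with $k^n$; notably, the bound $p > 2^{2n}$ does not appear essential for this lemma in the approach above and presumably reflects ambient hypotheses needed elsewhere in the proof of Theorem~A.
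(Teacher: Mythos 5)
Your opening steps are fine: diagonalising $\c$ inside $\t$, observing $\c$ is $\Phi$-stable because it is a $p$-subalgebra, using $\det(A^{[p]})=\det(A)^p$ (valid for $p$ odd, which your hypothesis gives) to see $\Phi|_\c$ is a semilinear bijection, and invoking additive Hilbert~90 to get $\c=W\otimes_{\F_p}k$ with $W=\c^\Phi\subseteq\F_p^n$ --- all of that is correct. The problem is the final verification that $C$ normalises $\h$, and specifically the sentence ``these conditions are equivalent''.

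Your argument does establish the equivalence of ``$m_i=m_j$ for all $m\in M$'' and ``$c_i=c_j$ for all $c\in\c$'', because the generators $\tilde w_a$ of $M'$ have entries in $[0,p-1]$, and congruence mod $p$ on that interval forces equality; this is then preserved by passage to the pure hull. But the ``analogous equivalence for weight differences'' is not analogous. The $\gl_n$-weights are the functionals $\epsilon_i-\epsilon_j$, and the relevant question is whether $c_i-c_j=c_k-c_l$ for all $c\in\c$ forces $m_i-m_j=m_k-m_l$ for all $m\in M$. From $\c$ you only get congruence mod $p$, and the differences $\tilde w_{a,i}-\tilde w_{a,j}$ lie in $[-(p-1),p-1]$, an interval of length $>p$, so congruence does not imply equality. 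Concretely, take $n=3$, $\c=k\cdot(1,0,p-1)$, $\h=\langle E_{12}+E_{23}\rangle$. Then $\c$ normalises $\h$ (indeed $E_{12}$ and $E_{23}$ share the $\c$-weight $a\mapsto a$), but your lift is $M=\Z\cdot(1,0,p-1)$, under which $E_{12}$ has $M$-weight $a\mapsto a$ while $E_{23}$ has $M$-weight $a\mapsto a(1-p)$. For $p>2$ these differ, so the $C$-weight decomposition is strictly finer than the $\c$-decomposition and $C$ does \emph{not} normalise $\h$. (A correct lift does exist, namely $M=\Z\cdot(1,0,-1)$, but your recipe does not find it.)

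This is precisely where the bound $p>2^{2n}$ enters, so your closing remark that the bound ``does not appear essential for this lemma'' is mistaken. The paper's proof turns the normalising condition for $s\in\t$ into a system $R\mathbf{c}=0$ over $\F_p$ whose coefficient matrix $R$ has entries in $\{0,\pm 1,\pm 2\}$ (coming from the explicit structure of a $\c$-weight basis of $\h$), and the content of the bound $p>2^{2n}$ is that $p$ cannot divide any nonzero elementary divisor of $R$, so that the nullity of $R$ mod $p$ agrees with its nullity over $\Z$. That in turn guarantees that every $c\in\c$ lifts to a cocharacter of $T$ that genuinely normalises $\h$ --- i.e.\ that a ``good'' lattice $M$ exists. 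Without some bound of this kind the lemma is simply false, as the paper's Example~\ref{fibonaccimotherfucker} shows. In short, the gap is not a missing detail but the heart of the lemma: the lift from $\F_p$ to $\Z$ must be compatible with the normalising conditions, and ensuring that requires the elementary-divisor (equivalently, the Smith normal form) analysis that the hypothesis $p>2^{2n}$ is tailored to.
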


\begin{proof}
Let $T$ be a diagonal maximal torus of $\GL_n$ and $\t=\Lie(T)$. Since $\c$ consists of semisimple elements, we may assume $\c\subseteq\t$.

Since $\c$ is restricted, it has a basis defined over $\F_p$ of elements
$Z_1,\dots,Z_s$ with $Z_i=\diag(z_{i1},\dots,z_{in})$ and 
each $z_{ij}\in\F_p$. By \cite[Prop.\ 2]{Dieu:1953} we may assume that $\c$ is a maximal torus of $\n_\g(\h)$, which we do from now on. 

Since $k$ is algebraically closed,
we may take a decomposition of $\h$ into weight spaces for $\c$. 
We have $\h=\h_0\oplus \bigoplus_\alpha \h_\alpha$ where $\h_0$ is some set of elements
commuting with $\c$, $\alpha$ is a non-trivial linear functional 
$\c\to k$ and each $\h_\alpha$ is a subspace of $\gl_n$ with $[c,X]=\alpha(c)X$ for $c\in \c$ and $X\in \h_\alpha$.

Let $\{X_i\}$ be a basis for $\h$ with each $X_i\in\h_0$ or $\h_\alpha$ for some $\alpha$ as above. Then $\c=\bigcap_i\n_\t(\la X_i\ra)$. Suppose $c=\diag(c_1,\dots,c_n)$. The condition $c\in\n_\t(\la X_i\ra)$ puts a set of conditions on the $c_i$. If only one entry of the matrix $X_i$ is non-zero or $X_i$ is diagonal, then $\t$ normalises $X_i$, hence the set of conditions is empty. Otherwise, if $(X_i)_{j,k}$ and $(X_i)_{l,m}$ are non-zero, then $c$ normalising $\la X_i\ra$ implies $c_j-c_k=c_l-c_m$. Letting $\mathbf c= (c_1,\dots,c_n)$ this condition can be rewritten as a linear equation $\mathbf r\mathbf c=0$, where $\mathbf r$ is an appropriate row vector whose entries are all $0$, except for up to four, where the non-zero entries take the values, up to signs or permutations, $(1,-1),(2,-2),(1,-2,1)$ or $(1,-1,-1,1)$ according to the values of $j,k,l$ and $m$. The collection of these, say $m$ relations, across $i$ and all pairs of non-zero entries in $X_i$  gives an $m\times n$ integral matrix $R$ so that $c\in \c$ if and only if it satisfies the equation $R\mathbf c=0$ modulo $p$. Similarly, if $\chi(t)=\diag(t^{a_1},\dots,t^{a_n})$ is a cocharacter with image in $T$, then one checks that $\chi(t)$ normalises $\h$ if the integral equation $R\mathbf a=0$ where $\mathbf a=(a_1,\dots,a_n)$.
If the nullity of $R$ is the same modulo $p$ as it is over the integers then for any $c\in \n_\t(\h)$,
there exists a cocharacter $\chi$ of $N_T(\h)$ with $d/dt|_{t=1}(\chi(t))=c$ and we are done.
But if the nullity of $R$ modulo $p$ differs from the nullity of $R$ over the integers, then we must have that $p|d_i$ for $d_i$ one of the non-zero elementary divisors of $R$.
Now by the theory of Smith Normal Form, if $r\in \NN$ is taken maximal so that there exists a
non-vanishing $r\times r$ minor, then the elementary divisors of $R$ are all at most the greatest common divisor of all non-zero $r\times r$ minors. Let $M$ be such an $r\times r$ minor. We are going to argue by induction on $r$ that $|\det(M)|\leq 2^{2r}$. Since $r\leq n$, the hypothesis will then show that $p$ is not a prime factor ofÊ $\det(M)$, as required.

We must have $r\leq n$. If there is a row of $M$ containing only elements of modulus $2$, then at most $2$ of these are non-zero and $2$ is a prime factor of $\det M$; Laplace's formula implies that the remaining matrix has determinant at most $2\det M'$ where $M'$ is a certain $r-1\times r-1$ minor of $M$, so that we are done by induction.  If there are no entries of modulus $2$, then each row contains at most $4$ entries of modulus $1$ and Laplace's formula then implies that $\det M\leq 4\det M'$ where $M'$ is a certain $r-1\times r-1$ minor of $M$ of the required form, so that we are done again by induction. Otherwise there is at least one row with non-zero entries $(1,-2)$ or $(1,-2,1)$. By Laplace's formula and induction, it is now easy to see that $|\det M|\leq 2^{2n-2}+2.2^{2n-2}+2^{2n-2}=2^{2n}$ and we are done.
\end{proof}

We are now in a position to prove Theorem A.

\begin{proof}[Proof of Theorem A]
First consider the case $G=\GL_n$. Let $\h$ be a subspace of $\g$ and let $\n = \n_\g(\h)$ be the Lie-theoretic normaliser of $\h$ in $\g$.

As before, by definition, $\n$ is a restricted subalgebra of $\g$.
Hence, applying the Jordan decomposition for restricted Lie algebras,
we see that $\n$ is generated by its nilpotent and semisimple
elements. Let $\{x_1,\dots,x_r,y_1,\dots,y_s\}$ be such a generating
set with $x_1,\dots,x_r$ nilpotent and $y_1,\dots,y_s$ semisimple.
To show that $N_G(\h)$ is smooth, it suffices to show that all the elements
$x_i$ and $y_j$ belong to the Lie algebra of $N_G(\h)_\red$.

For a nilpotent generator $x_i$, of nilpotence degree at most $n<p$, consider the smooth closed subgroup
$M_i = \overline{ \langle \exp(t.x_i) \mid t \in k \rangle}$ of $G$.
Since $p>2h-2$, we may apply Proposition \ref{prop:expad}, to obtain $M_i \subseteq N_G(\h)_\red$ and hence $x_i \in \Lie(M_i) \subseteq \Lie(N_G(\h)_\red)$, as required. 

It remains to consider the semisimple generators $y_i$.
Let $\t_i := \langle y_i \rangle_p \leq \n$ be the torus
generated by the $p$-powers of $y_i$.
By hypothesis, $p>2^{2n}$ and so we may apply Lemma \ref{findingtorus} to find a torus $T_i \leq N_G(\h)$ 
such that $\Lie(T_i) = \t_i$.
In particular $y_i \in \Lie(N_G(\h)_\red)$.
This finishes the proof in the case $G=\GL(V)$.

If $G$ is a reductive algebraic group suppose $G \rightarrow \GL(V) \cong \GL_d$ is a minimal
faithful module for $G$. 
Now since $p>2^{2\dim V}$, we have that normalisers of all subspaces of $\GL(V)$ are smooth. But now, by Lemma \ref{lem:redpair}, $(\GL(V), G)$ is a reductive pair, so that invoking Proposition \ref{redpairnorm} we obtain that $N_G(\h)$ is smooth. This completes the proof.
\end{proof}

\section{Non-semisimple subalgebras of classical Lie algebras. Proof of Theorem C}\label{sec:maxvgood}
Suppose char $k>2$ for this section.

This section provides proofs for some of the claims made in \cite{Ten87}. Here we tackle the proof of Theorem C.

\begin{prop}[see {\cite[\S5.8, Exercise 1]{SF88}}]
\label{bilinear}Let $\g\leq \gl(V)$ be a Lie algebra acting irreducibly on an $\g$-module $V$ such that $\g$ preserves a non-zero bilinear form. Then $\g$ is semisimple.\end{prop}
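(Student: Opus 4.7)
The strategy is to argue by contradiction: I will show that if $\g$ is not semisimple, then $\g$ contains a nonzero abelian ideal $A$ acting as zero on $V$, contradicting $\g\hookrightarrow\gl(V)$. Since each term of the derived series of any ideal of $\g$ is again an ideal of $\g$ (a routine Jacobi computation), the last nonzero term of the derived series of $\Rad(\g)$ is such an $A$. Next, the left and right radicals of $\lambda$ are $\g$-submodules of $V$; by irreducibility of $V$ and $\lambda\neq 0$ they both vanish, so $\lambda$ is non-degenerate. This yields an anti-automorphism $\sigma$ of $\End(V)$ determined by $\lambda(Tv,w)=\lambda(v,\sigma(T)w)$, under which $\sigma(x)=-x$ for all $x\in\g$ (by invariance of $\lambda$) and $\sigma(cI)=cI$ for scalars.

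The key step is to show that each $a\in A$ acts as a nilpotent endomorphism of $V$. Fix $a\in A$; for any $x\in\g$, set $b:=[x,a]\in A$; since $A$ is abelian we have $ab=ba$ in $\End(V)$, and a short induction from $xa=ax+b$ yields
\[
xa^p \;=\; a^p x + p\,a^{p-1}b \;=\; a^p x
\]
in characteristic $p$. Thus $a^p$ commutes with all of $\g$, so by Schur's lemma (applicable over algebraically closed $k$) $a^p=cI$ for some $c\in k$. However $\sigma(a^p)=\sigma(a)^p=(-a)^p=-a^p$ as $p$ is odd, while $\sigma(cI)=cI$; these force $c=0$, i.e.\ $a^p=0$.

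Hence $A$ acts on $V$ as a commuting family of nilpotent operators, and the standard iterated-nullspace form of Engel's theorem produces $V^A:=\{v\in V:Av=0\}\neq 0$. The identity $a(xv)=x(av)-[x,a]v$ together with $[x,a]\in A$ shows that $V^A$ is a $\g$-submodule, so by irreducibility $V^A=V$; thus $A$ acts as zero on $V$, and faithfulness of $\g\hookrightarrow\gl(V)$ gives $A=0$, the desired contradiction. The essential obstacle is the commutator identity $xa^p=a^px$ in characteristic $p$: both hypotheses---abelianness of $A$ (so $a$ commutes with $[x,a]$ in $\End(V)$) and positivity of the characteristic (so the factor of $p$ vanishes)---are needed here, and without either the argument breaks.
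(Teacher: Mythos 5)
Your proof is correct and follows essentially the same route as the paper's: pass to a nonzero abelian ideal $A$ of the non-semisimple $\g$, show $a^p$ is a scalar by Schur's lemma, use the invariance of the bilinear form together with $p$ odd to force that scalar to vanish, and conclude via Engel's theorem and irreducibility that $A$ annihilates $V$, a contradiction. You spell out a few details the paper leaves implicit (the existence of the abelian ideal via the derived series, the non-degeneracy of $\lambda$, and packaging the form-invariance into an anti-automorphism $\sigma$ rather than evaluating directly at a pair $(v,w)$ with $\lambda(v,w)=1$), but these are presentational rather than substantive differences.
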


\begin{proof}Assume otherwise. Then $\Rad(\g)\neq 0$ and we can find an abelian ideal $0\neq J\triangleleft \g$. Take $x\in J$. As $[x^p,y]=\ad(x)^py\in J^{(1)}=0$, $x^p$ centralises $\g$ and we have that $v\mapsto x^pv$ is a $\g$-homomorphism $V\to V$. Since $k$ is algebraically closed and $V$ is irreducible, Schur's lemma implies that $x^pv=\alpha(x)v$ for some map $\alpha: J \rightarrow k$.

Since $\lambda\neq 0$ there are $v,w$ with $\lambda(v,w)=1$.
Now $\alpha(x)=\lambda(x^pv,w)=-\lambda(v,x^pw)=-\alpha(x)$ so $\alpha(x)=0$. Thus $x^pv=0$ for all $x\in J$. Hence $J$ acts nilpotently on $V$ and so Engel's theorem gives an element $0\neq v\in V$ annihilated by $J$. Since $V$ is irreducible, it follows that $JV=J(\g v)\leq \g Jv=0$. Thus $J=0$ and $\g$ is semisimple.\end{proof}

Since any subalgebra of a classical simple Lie algebra of type $B$, $C$ or $D$ preserves the associated (non-degenerate) form we get

\begin{cor}\label{cor:nonss_red}If $\h$ is a non-semisimple subalgebra of a classical simple Lie algebra $\g$ of type $B,\ C$ or $D$ then $\h$ acts reducibly on the natural module $V$ for $\g$.\end{cor}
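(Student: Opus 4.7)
The plan is to deduce this as an immediate contrapositive consequence of Proposition \ref{bilinear}. Recall that by the very definition of the classical Lie algebras of types $B$, $C$ and $D$ recorded in the preliminaries, the algebra $\g$ is (a simple quotient of) the set of endomorphisms of $V$ preserving a fixed non-degenerate bilinear form $\lambda$ on $V$ (symmetric in types $B$ and $D$; alternating in type $C$). Hence every subalgebra $\h \subseteq \g$ also preserves $\lambda$, and in particular preserves a non-zero bilinear form.

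Now I would argue by contraposition. Assume $\h$ acts irreducibly on $V$. Since $\h \subseteq \gl(V)$ preserves the non-zero form $\lambda$ and acts irreducibly, Proposition \ref{bilinear} applies verbatim and yields that $\h$ is semisimple. Thus if $\h$ is non-semisimple, $\h$ must act reducibly on $V$.

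The only subtlety to be a little careful about is that the statement of the corollary only mentions the natural module $V$ for $\g$, so one should just check that the bilinear form preserved by $\g$ (and a fortiori by $\h$) in the definitions given earlier is indeed non-zero and hence fits the hypothesis of Proposition \ref{bilinear}. This is immediate from the definitions in the preliminaries section, so no real obstacle arises. The hypothesis $p>2$ imposed at the start of the section is exactly what is needed to ensure that $\so(V)$ and $\sp(V)$ are simple and behave as expected, so there is nothing further to verify.
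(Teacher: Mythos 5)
Your proposal is correct and is exactly the paper's argument: the paper derives the corollary in a single line by noting that every subalgebra of $\g$ of type $B$, $C$ or $D$ preserves the associated non-degenerate form, and then invoking the contrapositive of Proposition \ref{bilinear}. Nothing to add.
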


\begin{remark}If $\g=\mathfrak{g}_2$ (resp. $\mathfrak{f}_4$, $\mathfrak{e}_7$, $\mathfrak{e}_8$) then a subalgebra acting irreducibly on the self-dual modules $V_7$ (resp. $V_{26}$, or $V_{25}$ if $p=3$, $V_{56}$, $V_{248}=\mathfrak{e}_8$) is semisimple.
Here $V_n$ refers to the usual irreducible module of dimension $n$.
\end{remark}

A subalgebra is \emph{maximal rank} if it is proper
and contains a Cartan subalgebra (CSA) of $\g$.
(Note that CSAs of simple algebraic Lie algebras are tori.)
Call a subalgebra $\h$ of $\g$ an \emph{$R$-subalgebra}
if $\h$ is contained in a maximal rank subalgebra of $\g$.

For the following, notice that if $p|\dim V$ then $\sl(V)$ is not simple, though provided $\sl(V)\neq \sl_2$ in characteristic $2$, the central quotient $\psl(V)$ is simple. Now, a subalgebra $\h$ of $\psl(V)$ is an $R$-subalgebra of $\psl(V)$ if and only if its preimage $\pi^{-1}\h$ under $\pi:\sl(V)\to\psl(V)$ is an $R$-subalgebra. We say $\h$ acts reducibly on $V$ if $\pi^{-1}\h$ does.

\begin{prop}\label{reducibly} Let $\g$ be a simple algebraic Lie algebra of classical type and let $\h\leq \g$ act reducibly on the natural module $V$ for $\g$. Then $\h$ is an $R$-subalgebra unless $\g=\so(V)$ with $\dim V=2n$ with $\h\leq \so(W)\times \so(W')$ stabilising a decomposition of $V$ into two odd-dimensional, non-degenerate subspaces $W$ and $W'$ of $V$.\end{prop}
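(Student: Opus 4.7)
The plan is to fix a proper nonzero $\h$-invariant subspace $W\subseteq V$ (if $\g=\psl(V)$, pass first to $\pi^{-1}\h\leq\sl(V)$ via the convention stipulated just before the proposition) and then case split on the type of $\g$.

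For $\g=\sl(V)$ or $\psl(V)$, the stabiliser of $W$ in $\g$ is a proper parabolic subalgebra containing $\h$. Since any parabolic contains a Borel and hence a Cartan subalgebra, it is automatically of maximal rank, so $\h$ is an $R$-subalgebra and there is nothing more to do in this case.

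For the orthogonal cases $\g=\sp(V)$ or $\so(V)$, I use that $\h$ preserves the defining bilinear form, forcing $W^\perp$ and hence $W\cap W^\perp$ (which is automatically totally singular) to be $\h$-stable. If $W\cap W^\perp\neq 0$, then by the fact recalled in the notation section that parabolic subalgebras in types $B$--$D$ are precisely the stabilisers of totally singular subspaces, $\h$ sits inside a proper parabolic subalgebra of $\g$, which is of maximal rank. If instead $W\cap W^\perp=0$, then $V=W\oplus W^\perp$ is an orthogonal direct sum of non-degenerate subspaces, and $\h\leq\g_W\oplus\g_{W^\perp}$, where $\g_W,\g_{W^\perp}$ denote the analogous orthogonal or symplectic subalgebras attached to $W$ and $W^\perp$.

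A routine rank count then finishes the argument. In types $B_n$ and $C_n$, and in type $D_n$ whenever $\dim W$ (equivalently $\dim W^\perp$) is even, the subalgebra $\g_W\oplus\g_{W^\perp}$ has full rank $n$ and thus is a maximal rank subalgebra containing $\h$. The sole case in which the rank drops is $\g=\so(V)$ of type $D_n$ with both $\dim W$ and $\dim W^\perp$ odd: here the two pieces contribute ranks $a$ and $n-a-1$ for a total of $n-1$, and no enlargement inside $\so(V)$ need exist. This is precisely the exception recorded in the proposition. The main (minor) obstacle is making sure the parabolic identification is applied correctly in the orthogonal cases and that the rank arithmetic singles out exactly the stated $D_n$ exception; beyond that, no structural input is required.
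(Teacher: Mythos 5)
Your argument is correct and follows the paper's proof almost exactly: stabilisers of totally singular subspaces give parabolics, and the non-degenerate decomposition plus a rank count isolates the $D_n$ exception with both pieces odd-dimensional. The only cosmetic difference is that the paper takes $W$ minimal (so $W\cap W^\perp$ is $0$ or $W$ by minimality) whereas you pass directly to $W\cap W^\perp$; both routes work.
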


\begin{proof}Let $V$ be the natural module for $\g$ and let $W\leq V$ be a minimal $\h$-submodule, so that $\h\leq \Stab_\g(W)$. If $\g$ is of type $A$ then $\Stab_\g(W)$ is $\Lie(P)$ for a (maximal) parabolic $P$ of $\SL(V)$. Hence $\h$ is an $R$-subalgebra of $\g$. 

If $\g$ is of type $B,\ C$ or $D$, then consider $U=W\cap W^\perp$; this is the subspace of $W$ whose elements $v$ satisfy $\lambda(v,w)=0$ for every $w\in W$. Since $M$ preserves $\lambda$, this is a submodule of $W$, hence we have either $U=0$ or $U=W$ by minimality of $W$. If the latter, $W$ is totally singular. Thus $\Stab_\g W$ is $\Lie(P)$ for a parabolic subgroup $P$ of the associated algebraic group.

On the other hand, $U=0$ implies that $W$ is non-degenerate. Then $V = W \oplus W^\perp$ is a direct sum of $\h$-modules and we see that $\Stab_\g W$ is isomorphic to \begin{enumerate}\item $\sp_{2r}\times \sp_{2s}$ in case $L$ is of type $C$, $\dim W=2s$ and $2r+2s=\dim V$
\item $\so_{r}\times \so_{s}$ in case $L$ is of type $B$ or $D$, $\dim W=s$ and $r+s=\dim V$.\end{enumerate}

Note that by \cite[VII, \S2, No.\ 1, Prop.\ 2]{Bourb05} the dimensions of the CSA of a direct product is the sum of the dimensions of the CSAs of the factors. In case (i), the subalgebra described has the $(r+s)$-dimensional CSA arising from the two factors.  In case (ii), if $\dim V=2n+1$ is odd then one of $r$ and $s$ is odd. If $r$ is odd then $\so_r$ has a CSA of dimension $(r-1)/2$, and $\so_s$ has a CSA of dimension $s/2$, so that the two together give a CSA of dimension $s/2+(r-1)/2=n$. (Similarly if $s$ is odd.) Otherwise $\dim V=2n$ is even. If $\dim W$ is even then $\Stab_\g W$ contains a CSA of dimension $r/2+s/2=n$. If $\dim W$ is odd then we are in the exceptional case described in the proposition. \end{proof}
\begin{remark} In the exceptional case, note that $\so_{2r+1}\times \so_{2s+1}$ contains a CSA of dimension $r+s$, whereas $\so_{2n+2}$ contains a CSA of dimension $n+1=r+s+1$.\end{remark}

\begin{cor}\label{bcdcor}\label{thmb:bcd}
Let $\g$ be of type $B$, $C$ or $D$.
If $\h$ is a maximal non-semisimple subalgebra of $\g$, then $\h$ is $\Lie(P)$ for $P$ a maximal parabolic of $G$.
In particular, if $\h$ is any non-semisimple subalgebra of $\g$, it is an $R$-subalgebra.
\end{cor}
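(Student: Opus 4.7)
My plan is to argue by induction on $\dim V$, combining Corollary \ref{cor:nonss_red} with the case analysis already present in the proof of Proposition \ref{reducibly}. Let $\h$ be a maximal non-semisimple subalgebra of $\g$. First I invoke Corollary \ref{cor:nonss_red} to conclude that $\h$ acts reducibly on the natural module $V$, and pick a minimal $\h$-submodule $W_0 \subseteq V$. By minimality, the $\h$-submodule $W_0 \cap W_0^\perp$ is either all of $W_0$ or zero.

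If $W_0$ is totally singular, then $\Stab_\g(W_0) = \Lie(P)$ for a proper parabolic $P \leq G$. Since $\Lie(P)$ is non-semisimple (its nilradical contains $\Lie(R_u(P)) \neq 0$), maximality of $\h$ forces $\h = \Lie(P)$; as every parabolic sits inside a maximal parabolic whose Lie algebra remains non-semisimple, $P$ must itself be maximal.

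If $W_0$ is non-degenerate then $V = W_0 \oplus W_0^\perp$ as $\h$-modules, and $\h$ sits inside $\s(W_0) \oplus \s(W_0^\perp)$ (with $\s$ equal to $\sp$ or $\so$ according to the type of $\g$). This case, including the exceptional configuration flagged in Proposition \ref{reducibly}, is what I expect to be the main obstacle: the overalgebra is itself semisimple and admits no obvious parabolic containment. My resolution is a subdirect-product argument: if both projections $\pi_i(\h)$ were semisimple, then so would $\h$ be, since any $r \in \Rad(\h)$ maps to $\pi_i(\Rad(\h)) \subseteq \Rad(\pi_i(\h)) = 0$. Hence at least one projection, say $\pi_1(\h)$, is a non-semisimple subalgebra of the smaller classical simple Lie algebra $\s(W_0)$. (Degenerate situations where $\s(W_0)$ vanishes or is a torus---e.g.\ $\dim W_0 = 1$ in types B or D---I handle by applying the argument to $\pi_2(\h) \leq \s(W_0^\perp)$ instead.) The inductive hypothesis then provides a nonzero totally singular subspace $U \leq W_0$ stabilised by $\pi_1(\h)$; because $U$ is $\h$-invariant and totally singular in $V$, I deduce $\h \leq \Stab_\g(U) = \Lie(P)$ for a proper parabolic $P$ of $G$, and conclude as in the first case.

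The ``in particular'' clause then follows at once: any non-semisimple subalgebra is contained in a maximal non-semisimple one, which is a maximal parabolic Lie algebra by the main statement and hence contains a Cartan subalgebra of $\g$, proving that such $\h$ is an $R$-subalgebra.
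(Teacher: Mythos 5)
Your argument runs in the same circle of ideas as the paper's---Proposition \ref{bilinear}, the totally singular vs.\ non-degenerate dichotomy for a minimal submodule, and the subdirect-product observation for the non-semisimple projection---but the paper does not induct on $\dim V$. Instead it argues by contradiction: assuming $\h$ fixes no singular subspace, it takes an orthogonal decomposition $V = V_1 \perp \dots \perp V_n$ into non-degenerate $\h$-invariant pieces with $n$ \emph{maximal}, locates a non-semisimple projection $\h_1 \leq \s(V_1)$, applies Proposition \ref{bilinear} to extract a proper $\h_1$-submodule of $V_1$, and observes that under the standing assumption this submodule must be non-degenerate, so it refines the decomposition and contradicts the maximality of $n$. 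That single contradiction replaces your entire inductive step and, crucially, never needs to know anything about $\s(V_i)$ beyond the fact that its subalgebras preserve a non-degenerate form, so there are no base cases.

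Your induction hands the problem to $\pi_i(\h) \leq \s(W_0)$ or $\s(W_0^\perp)$ and invokes the inductive hypothesis, which is the Corollary itself and so requires the target to be a classical simple Lie algebra of type $B$, $C$ or $D$. This is where there is a genuine gap: the relevant orthogonal complement can have dimension $2$, $3$ or $4$, in which case $\so(W_0^\perp)$ is a torus, $\sl_2$, or $\sl_2 \times \sl_2$ respectively, and the Corollary's hypothesis fails (notably $\so_4 \cong \sl_2\times\sl_2$ is semisimple but not simple). Your parenthetical remark covers only the cases where $\s(W_0)$ vanishes or is a torus, and your proposed fix of passing to the other projection is not available when that projection is the semisimple one. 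In fact your ``say $\pi_1(\h)$'' is misleading: since $W_0$ was chosen minimal, any $\pi_1(\h)$-submodule of $W_0$ is an $\h$-submodule, so $W_0$ is $\pi_1(\h)$-irreducible and Proposition \ref{bilinear} forces $\pi_1(\h)$ to be semisimple; the non-semisimple projection is therefore always $\pi_2(\h) \leq \s(W_0^\perp)$, and the problematic small cases are exactly those of $\s(W_0^\perp)$. The cleanest repair is to strengthen what you induct on---from the Corollary to the intrinsic assertion ``a non-semisimple Lie algebra preserving a non-degenerate bilinear form on a space of dimension less than $\dim V$ stabilises a non-zero totally singular subspace''---which your argument proves using Proposition \ref{bilinear} alone and which covers the small orthogonal algebras uniformly; or simply adopt the paper's maximal-decomposition contradiction, which avoids the base cases altogether.
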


\begin{proof}Assume otherwise. Then $\h$ fixes no singular subspace on $V$. Suppose $\h$ preserves a decomposition $V=V_1\perp V_2\perp\dots \perp V_n$ on $V$ with $n$ as large as possible, with the $V_i$ all non-degenerate. Then $\h\leq \g_1=\so(V_1)\times\dots\times \so(V_n)$ or $\h\leq \g_1=\sp(V_1)\times\dots\times \sp(V_n)$. Since $\h$ is non-semisimple, the projection $\h_1$ of $\h$ in $\so(V_1)$ or $\sp(V_1)$, say, is non-semisimple. Then Proposition \ref{bilinear} shows that $\h$ acts reducibly on $V_1$. Since $\h$ stabilises no singular subspace, the proof of Proposition \ref{reducibly} shows that $\h$ stabilises a decomposition of $V_1$ into two non-degenerate subspaces, a contradiction of the maximality of $n$.\end{proof}

Let $\h$ be a restricted Lie algebra, $I \leq \h$ an abelian ideal
and $V$ an $\h$-module.
Let $\lambda\in I^*$.
Recall from \cite[\S5.7]{SF88} that $\h^\lambda=\{x\in \h|\lambda([x,y])=0 \text{ for all }y\in I\}$
and $V^\lambda=\{v\in V|x.v=\lambda(x)v \text{ for all } x\in I\}$.

\begin{prop}
Let $\h$ be a non-semisimple subalgebra of $\sl(V)$ with $V$ irreducible for $\h$.
Then $p|\dim V$.
\end{prop}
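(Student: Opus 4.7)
The plan is to apply Schur's lemma to a carefully chosen element of $\gl(V)$ in order to obtain either the divisibility $p \mid \dim V$ directly, or a contradiction with the faithfulness of the action.

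Since $\h$ is non-semisimple, $\Rad(\h) \neq 0$, and its last non-trivial derived power yields a non-zero abelian ideal $I \triangleleft \h$. I will show that for any $x \in I$, the matrix power $x^p \in \gl(V)$ centralises $\h$. For $z \in \h$, the bracket $[x,z]$ lies in $I$ (ideal property), whence $\ad(x)^2(z) = [x,[x,z]] = 0$ since $I$ is abelian; thus $\ad(x)^p(z) = 0$ for $p \geq 2$. The standard identity $\ad(x)^p = \ad(x^p)$ inside $\gl(V)$, obtained from the binomial expansion of $\ad(x)^p$ together with the vanishing of $\binom{p}{k}$ for $0 < k < p$, then gives $[x^p, z] = 0$.

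Since $V$ is $\h$-irreducible, Schur's lemma forces $x^p = c_x \cdot \mathrm{id}_V$ for some $c_x \in k$. Taking traces, $c_x \dim V = \tr(x^p) = (\tr x)^p = 0$: the middle equality is the Frobenius identity $\sum_i \alpha_i^p = (\sum_i \alpha_i)^p$ applied to the eigenvalues of $x$, and the final equality uses $x \in I \subseteq \sl(V)$. So $c_x \dim V = 0$ in $k$, and if some $c_x \neq 0$ we conclude $p \mid \dim V$.

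Otherwise $x^p = 0$ for every $x \in I$, so $I$ consists of nilpotent operators on $V$. Engel's theorem produces a non-zero $v \in V^I := \{w \in V : Iw = 0\}$, and this subspace is $\h$-stable by a direct application of the ideal property (for $z \in \h$, $x \in I$, $w \in V^I$: $x(zw) = z(xw) + [x,z]w = 0$ as $[x,z] \in I$). Irreducibility forces $V^I = V$, so $I$ acts as zero on $V$; but $I \subseteq \gl(V)$ acts faithfully on $V$, giving $I = 0$ and contradicting our choice. The only non-routine step is spotting that $x^p$ lies in the centraliser of $\h$ in $\gl(V)$; once this observation is in hand, the Schur/trace/Engel chain runs automatically.
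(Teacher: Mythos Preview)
Your argument is correct. It is also genuinely different from the paper's proof of this proposition, and in fact more elementary. The paper first passes to the $p$-closure $\h_p$ to reduce to the restricted case, then invokes the machinery of induced modules from \cite[\S5.7]{SF88}: it writes $V\cong \Ind_{\h^\lambda}^\h(V^\lambda,S)$ for a suitable weight $\lambda\in I^*$, and extracts the divisibility $p\mid\dim V$ either from a trace computation on $V^\lambda=V$ or from the dimension formula $\dim V = p^{\dim \h/\h^\lambda}\dim V^\lambda$ when $V^\lambda\subsetneq V$.

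Your route bypasses all of this: you observe directly that $x^p$ centralises $\h$ (via $\ad(x)^2|_\h=0$ and $\ad(x^p)=\ad(x)^p$ in $\gl(V)$), then apply Schur, the Frobenius trace identity $\tr(x^p)=(\tr x)^p$, and Engel. This is essentially the same mechanism the paper uses for the earlier Proposition~\ref{bilinear} (the bilinear-form case), transplanted to $\sl(V)$ with the trace identity replacing the form computation. What you gain is a self-contained argument with no appeal to the induced-module structure theory; what the paper's approach buys is a more structural picture of $V$ as an induced module, which would matter if one wanted finer information beyond the bare divisibility statement.
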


\begin{proof}
Let $\h$ be as described and
let $I$ be a nonzero abelian ideal of $\h$.
If $\h_p$ denotes the closure of $\h$ under the $p$-mapping, then
by \cite[2.1.3(2),(4)]{SF88}, $I_p$ is an abelian $p$-ideal of $\h_p$.
Thus $\Rad \h_p\neq 0$ and $\h_p$ is non-semisimple.
Hence we may assume from the outset that $\h = \h_p$ is restricted with 
nonzero abelian ideal $I$.

Since $\h$ acts irreducibly on $V$, by \cite[Corollary 5.7.6(2)]{SF88} there exist
$S\in \h^*$, $\lambda\in I^*$ such that
$$V\cong \Ind_{\h^\lambda}^\h(V^\lambda,S).$$

If $\lambda$ is identically $0$ on $I$ then $V^\lambda$ is an $\h$-submodule.
We cannot have $V^\lambda=0$ (or else $V=0$) so $V^\lambda=V$ and $I$ acts trivially on $V$, a contradiction since $I\leq \sl(V)$.

Hence $\lambda(x)\neq 0$ for some $x\in I$.
Suppose $V^\lambda=V$. Then as $x\in \sl(V)$, we have $\tr_V(x)=\dim V\cdot\lambda(x)=0$
and thus $p | \dim V$ and we are done.
If $\dim V^\lambda<\dim V$, then by \cite[Prop. 5.6.2]{SF88} we have
$\dim V=p^{\dim L/L^\lambda}\cdot\dim V^\lambda$. Thus again $p|\dim V$, proving the theorem.
\end{proof}

\begin{cor}\label{thmb:sl}If $p \nmid \dim V$ then any non-semisimple subalgebra $\h$ of $\sl(V)$ acts reducibly on $V$. Hence it is contained in $\Lie(P)$ for $P$ a maximal parabolic of $\SL(V)$. In particular $\h$ is an $R$-subalgebra.\end{cor}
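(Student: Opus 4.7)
My plan is to deduce the corollary directly from the preceding proposition together with Proposition \ref{reducibly}.

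First I would take the contrapositive of the preceding proposition: if $\h \leq \sl(V)$ is non-semisimple and $p \nmid \dim V$, then $\h$ cannot act irreducibly on $V$, for otherwise $p$ would divide $\dim V$. This establishes the first sentence of the corollary.

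Next, given that $\h$ acts reducibly on $V$, I would pick a proper nonzero $\h$-submodule $W \subsetneq V$. Since $\g = \sl(V)$ is of type $A$, Proposition \ref{reducibly} (or equivalently the direct observation made in its proof for the type $A$ case) tells us that $\mathrm{Stab}_\g(W) = \Lie(P)$, where $P$ is the maximal parabolic subgroup of $\SL(V)$ stabilising $W$. Hence $\h \leq \Lie(P)$, as required. Note that when $p \mid \dim V$ one must work with $\psl(V)$ instead, but the hypothesis $p \nmid \dim V$ means $\sl(V)$ is simple and we do not need to pass to the quotient.

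Finally, since $P$ is a (maximal) parabolic, it contains a maximal torus $T$ of $\SL(V)$; therefore $\Lie(P)$ contains the Cartan subalgebra $\Lie(T)$ of $\g$, and so $\h \subseteq \Lie(P)$ is by definition contained in a maximal rank subalgebra of $\g$, i.e.\ an $R$-subalgebra. There is no real obstacle here — the only content lies in the preceding proposition, and the present corollary is an essentially formal consequence combining that with the structure of stabilisers in type $A$; the one point to verify carefully is simply that the hypothesis $p \nmid \dim V$ indeed makes $\sl(V)$ itself (rather than $\psl(V)$) the relevant classical simple Lie algebra so that Proposition \ref{reducibly} applies verbatim.
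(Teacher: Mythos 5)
Your proof is correct and is exactly the argument the paper leaves implicit: take the contrapositive of the preceding proposition to get reducibility, pick a proper nonzero $\h$-submodule $W$ so that $\h\leq\Stab_{\sl(V)}(W)=\Lie(P)$ for the maximal parabolic $P$ stabilising $W$ (as observed in the type-$A$ case of Proposition \ref{reducibly}), and note $\Lie(P)$ contains a Cartan subalgebra. Nothing further to add.
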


Putting together Corollaries \ref{thmb:bcd} and \ref{thmb:sl}, this completes the proof of Theorem C.

As a first application, the following lemma uses Theorem C to show that $p$-reductive
implies strongly $p$-reductive. 
Recall that a restricted Lie algebra is \emph{strongly $p$-reductive} if it is the direct sum of a central torus and a semisimple ideal.

\begin{lemma}\label{predimpstrong}Let $\h \subseteq \gl_n$ be a subalgebra and let $p>n$.
If $\h$ is $p$-reductive, it is strongly
$p$-reductive.
\end{lemma}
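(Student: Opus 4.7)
The plan is to proceed in four steps: (1) $\Rad(\h)$ is abelian; (2) $\Rad(\h) = T \oplus N$ with $T$ a central torus and $N$ consisting of $p$-nilpotent elements; (3) $N = 0$; and (4) $\h$ splits as $T \oplus \s$ for a semisimple ideal $\s$.

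For (1), Lie's theorem applies since $p > n = \dim V$, so $\Rad(\h)$ is triangularisable on $V$ and $[\Rad(\h),\Rad(\h)]$ consists of $V$-nilpotent matrices. It is an ideal of $\h$, and the identification $\Rad_V(\h) = \Rad_p(\h)$ for restricted subalgebras of $\gl(V)$ (recalled before Proposition \ref{vasiu}) places it in $\Rad_p(\h) = 0$, so $\Rad(\h)$ is abelian. For (2), the restricted Jordan decomposition splits $\Rad(\h) = T \oplus N$. For any $x \in \h$, $\ad_x$ restricts to a restricted derivation of the abelian ideal $\Rad(\h)$, and the formula $D(y^{[p]}) = (\ad y)^{p-1}(D y)$ vanishes since $\ad y|_{\Rad(\h)} = 0$. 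Hence $\Rad(\h)^{[p]} \subseteq Z(\h)$. In particular $n^{[p]}$ spans a central $p$-nil ideal of $\h$, which is forced to vanish by $p$-reductivity, so $N^{[p]} = 0$; similarly $T^{[p]} \subseteq Z(\h)$, and bijectivity of the $p$-map on a torus over an algebraically closed field gives $T = T^{[p]} \subseteq Z(\h)$.

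The main obstacle is (3). Since $\Rad_p(\h) = 0$, it suffices to show $[\h, N] \subseteq N$: then $N$ becomes a $p$-nil ideal and is zero. Given $x \in \h$ and $n \in N$, write $[x, n] = t' + n''$ with $t' \in T$, $n'' \in N$, and argue $t' = 0$ as follows. Centrality of $T$ in $\h$ lets me decompose $V = \bigoplus_\chi V_\chi$ into $T$-weight spaces; each $V_\chi$ is $\h$-stable because $\h$ commutes with $T$. On $V_\chi$ the commutator $[x,n]|_{V_\chi}$ has trace zero, $n''|_{V_\chi}$ is nilpotent hence traceless, while $t'|_{V_\chi}$ acts as the scalar $\chi(t')$. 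Taking traces yields $\chi(t') \dim V_\chi = 0$ for every $\chi$ with $V_\chi \neq 0$. Since $\dim V_\chi \leq n < p$ is nonzero in $k$, we obtain $\chi(t') = 0$ for every such $\chi$; faithfulness of $\h \hookrightarrow \gl(V)$ then forces $t' = 0$. Hence $N$ is an $\h$-ideal and vanishes.

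Once $\Rad(\h) = T$ is a central torus, $\h/T$ is semisimple and step (4) reduces to a Levi-type splitting of the central extension $0 \to T \to \h \to \h/T \to 0$. Its obstruction lies in $\opH^2(\h/T, T)$ with trivial coefficients, and for $p > n$ this vanishes on the simple factors of $\h/T$ (for instance by Theorem D(c) applied to the trivial module in the lowest alcove), yielding $\h = T \oplus \s$ as desired. I expect Step 3 to be the real technical obstacle: Steps 1--2 amount to bookkeeping with the restricted structure, and Step 4 is standard once the radical is known to be a central torus, whereas the trace argument of Step 3 is the key input, crucially exploiting the fact that $T$ acts by scalars on each weight space $V_\chi$ of dimension strictly less than $p$.
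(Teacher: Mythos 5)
Your first three steps are correct and, in the case of step (3), rather elegant: the trace argument on $T$-weight spaces is a clean way to show $N$ is an ideal, and the deduction $T\subseteq Z(\h)$ via Jacobson's formula $D(y^{[p]})=(\ad y)^{p-1}(Dy)$ together with surjectivity of the $p$-map on a torus is sound. The gap is in step (4). You need $\opH^2(\h/T,k)=0$, but $\h/T$ is a priori only an abstract semisimple (i.e.\ zero-radical) modular Lie algebra, and over a field of characteristic $p$ such an algebra need not be a direct sum of simple algebras, much less of algebraic ones, so the phrase ``the simple factors of $\h/T$'' is not well-defined. Theorem D(c) (equivalently Theorem~\ref{h1h2vanish}(b)) applies to $\g=\Lie(G)$ for $G$ a semisimple algebraic group; to invoke it you would first need to realise $\h/T$ as such an algebra, which via Strade's theorem (Theorem~\ref{Strade}) requires a faithful $\h/T$-module of dimension less than $p$. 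Since $T$ acts non-trivially on $V$ in general, $\h/T$ does not inherit an action on $V$, and you have not constructed such a module. Even granting that, you would still have to exclude a first Witt algebra factor (which Strade's theorem allows when $p=n+1$) and verify $p>h$ for the Coxeter number of the resulting group before the cohomology vanishing can be applied.

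The paper avoids the cohomological splitting entirely: it takes a minimal parabolic $\p=\l+\q$ of $\gl_n$ containing $\h$, projects to the Levi $\l\cong\prod_i\sl(W_i)\times\z$, applies Theorem C to conclude that each projection of $\h_\l$ to $\sl(W_i)$ is semisimple (otherwise $\p$ would not be minimal), and finally uses $p$-reductivity to see that the projection identifies $\h$ with $\h_\l$. Your structural analysis of $\Rad(\h)$ is a genuinely different and in places more elementary route, but as written you still need some additional non-trivial input --- essentially the paper's parabolic reduction and Theorem C, or else an independent argument that $\h/T$ is algebraic with vanishing $\opH^2$ in trivial coefficients --- to finish step (4).
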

\begin{proof}
Take $\p=\l+\q$ a minimal parabolic subalgebra with $\h\leq \p$. Set $\h_l$ to be the image of $\h$ under the projection $\pi:\p\to \l$.
Since $p>n$, we have $\l\cong \gl(W_1)\times\dots\times\gl(W_s)
\cong \sl(W_1) \times \dots \sl(W_s) \times \z$,
where $\z$ is a torus.
Let $\s_i$ be the projection of $\h_\l$ to $\sl(W_i)$,
and let $\z'$ be the projection of $\h_\l$ to $\z$.
If the projection of $\Rad(\h_\l)$ to $\sl(W_i)$ is non-trivial,
then $\s_i$ is not semisimple.
By Theorem C, $W_i$ is not irreducible for $\s_i$. Thus $\p$ is not minimal subject to containing $\h$, a contradiction, proving that all the $\s_i$ are semisimple.
Moreover, $\z' = Z(\h_\l)$, as the projection of $\z$
to each $\sl(W_i)$ must vanish.
This forces $\h_\l \subseteq \s_1 \times \dots \times \s_s \times Z(\h_\l)$ to be strongly $p$-reductive.
As $\h$ is $p$-reductive, we have that $\pi$ is injective on $\h$, and hence $\h \cong \h_l$
is strongly $p$-reductive.
\end{proof}

\section{Complete reducibility and low-degree cohomology for classical Lie algebras: Proof of Theorem D}\label{sec:cohom}

Let $G$ be a connected reductive algebraic group with root system $R$ and let $G_r\triangleleft G$ be the $r$th Frobenius kernel for any $r\geq 1$. It is well-known that the representation theory of $G_1$ and $\g$ are very closely related. In this section we recall results on the cohomology of small $G_r$-modules and use a number of results of Bendel, Nakano and Pillen to prove that small $G_r$-modules are completely reducible with essentially one class of exceptions. We do this by examining $\Ext^1_{G_r}(L(\lambda),L(\mu))$ for two simple modules $L(\lambda)$ and $L(\mu)$ of bounded dimension or weight. While we are at it, we also get information about $\opH^2(G_1,L(\lambda))$.
In a further subsection, we then go on to use this to prove the analogous statements for $\g$-modules. One crucial difference we notice is with central extensions: $\opH^2(\g,k)$ tends to be zero, whereas $\opH^2(G_1,k)$ is almost always not; c.f. Corollary \ref{mcncor} and Theorem \ref{h1h2vanish}.

All the notation in this section is as  in \cite[List of Notations, p.\ 569]{Jan03}:
In particular, for a fixed maximal torus $T \leq G$, we denote by $R$
the corresponding root system, by $R^+$ a choice of positive roots with corresponding
simple roots $S \subseteq R^+$, by $X(T)_+ \subseteq X(T)$ the dominant weights inside
the character lattice, by $L(\lambda)$ the simple $G$-module of highest weight
$\lambda \in X(T)_+$, by $H^0(\lambda)$ the module induced from $\lambda$ with
socle $L(\lambda)$, by $C_\Z$ (resp.\ 
$\bar C_\Z$) the dominant weights inside the lowest alcove
(respectively, in the closure of the lowest alcove).
If $G$ is simply connected, we write $\omega_i \in X(T)_+$ for the 
fundamental dominant weight corresponding to $\alpha_i \in S = \{\alpha_1,\dots,\alpha_l\}$.

Let us recall some results from \cite{McN02} which show the interplay between the conditions that, relative to $p$, (i) modules are of small dimension; (ii) their high weights are small; and (iii) the Coxeter number is small.

\begin{prop}[\!\!{\cite[Prop.~5.1]{McN02}}]\label{mcn1}Let $G$ be simple and  simply connected, let $L$ be a simple non-trivial restricted $G$-module with highest weight $\lambda\in X(T)_+$ and suppose that $\dim L\leq p$. Then
\begin{enumerate}\item We have $\lambda\in \bar C_\Z$.
\item We have $\lambda\in C_\Z$ if and only if $\dim L<p$.
\item We have $p\geq h$. If moreover $\dim L<p$ then $p>h$.
\item If $R$ is not of type $A$ and $\dim L=p$ then $p>h$. If $p=h$ and $\dim L=p$ then $R=A_{p-1}$ and $\lambda=\omega_i$ with $i\in\{1,p-1\}$.\end{enumerate}\end{prop}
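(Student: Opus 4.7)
The plan is to combine the Weyl dimension formula on the closed fundamental alcove with dimension lower bounds for weights outside it. I would prove (1) first, as the main geometric input, and then extract (2), (3), (4) from a careful analysis of the Weyl formula.

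For (1), I argue by contrapositive: assume $\lambda \notin \bar C_\Z$, so $\langle\lambda+\rho,\alpha_0^\vee\rangle \geq p+1$, where $\alpha_0$ is the highest short root. The main tool is Premet's theorem, which in good characteristic guarantees that every $W$-conjugate of every dominant weight $\mu \leq \lambda$ occurs as a weight of $L(\lambda)$; this gives the lower bound $\dim L(\lambda) \geq |W\mu|$. Examining the smallest restricted dominant weights just outside $\bar C_\Z$ across the classification of root systems, and combining orbit-size bounds with Weyl-formula estimates on $H^0(\lambda)$, one obtains $\dim L(\lambda) > p$ in every case. This case analysis, together with a handful of small-characteristic patches where Premet's theorem requires care, is the main obstacle.

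Granted $\lambda \in \bar C_\Z$, the linkage principle yields $L(\lambda)=H^0(\lambda)$ and so the Weyl dimension formula applies:
\[\dim L(\lambda) \;=\; \prod_{\alpha \in R^+} \frac{\langle\lambda+\rho,\alpha^\vee\rangle}{\langle\rho,\alpha^\vee\rangle}.\]
Every denominator $\langle\rho,\alpha^\vee\rangle$ lies in $[1,h-1]$, so assuming $p \geq h$ (established next) none is divisible by $p$. If $\lambda\in C_\Z$, each numerator also lies in $[1,p-1]$, hence $p \nmid \dim L(\lambda)$; combined with $\dim L(\lambda)\leq p$ this yields $\dim L(\lambda)<p$. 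Conversely, if $\lambda\in\bar C_\Z\setminus C_\Z$, the numerator indexed by $\alpha_0$ equals $p$ while the rest lie in $[1,p-1]$, forcing $p\mid\dim L(\lambda)$ and hence $\dim L(\lambda)=p$. This proves (2). For (3), $\lambda$ nontrivial restricted in $\bar C_\Z$ gives $h = \langle\rho,\alpha_0^\vee\rangle + 1 \leq \langle\lambda+\rho,\alpha_0^\vee\rangle \leq p$, so $p \geq h$; and $\dim L(\lambda)<p$ forces $\lambda\in C_\Z$, strengthening this to $p>h$.

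For (4), a primality observation dramatically shortens the analysis: $p=h$ with $p$ prime forces $R$ to be of type $A$. Indeed, the Coxeter numbers of the remaining simple root systems are even (types $B_n$, $C_n$, $D_n$) or equal to $6, 12, 18, 30$ (types $G_2, F_4, E_6, E_7, E_8$), none of which is an odd prime; this yields the contrapositive of the first clause. For the second clause, in type $A_{p-1}$ with $h=p$, a weight on the upper wall has $\langle\lambda+\rho,\alpha_0^\vee\rangle = p = (p-1)+1$, forcing $\langle\lambda,\alpha_0^\vee\rangle = \sum_{i} a_i = 1$ where $\lambda=\sum a_i\omega_i$, so $\lambda=\omega_i$ for some $i$. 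The Weyl dimension formula gives $\dim L(\omega_i)=\binom{p}{i}$, which equals $p$ precisely when $i\in\{1,p-1\}$, yielding the natural module and its dual as claimed.
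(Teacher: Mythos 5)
The paper does not prove this proposition: it is quoted verbatim from McNinch \cite[Prop.~5.1]{McN02}, so there is no internal argument to compare against. Your sketch is effectively an attempt to reconstruct McNinch's proof.

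The derivation of parts (2)--(4) from part (1) is correct. Your $p$-adic analysis of the Weyl dimension formula works: given (1), part (3) follows directly (and depends only on (1), so the forward reference ``assuming $p\geq h$ (established next)'' is cosmetic rather than circular); then no denominator $\langle\rho,\alpha^\vee\rangle\in[1,h-1]$ is divisible by $p$, and since $\alpha_0^\vee$ strictly dominates every other positive coroot, the numerator equal to $p$ when $\lambda$ lies on the upper wall of $\bar C_\Z$ is unique, giving $v_p(\dim L(\lambda))=1$ and hence $\dim L(\lambda)=p$ exactly. The primality shortcut in (4) is a genuine simplification: every simple root system not of type $A$ has even Coxeter number (all are $\geq 4$), so $p=h$ prime forces type $A_{p-1}$ at a stroke, and the binomial computation $\dim L(\omega_i)=\binom{p}{i}$ correctly isolates $i\in\{1,p-1\}$.

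The genuine gap is part (1), which is where the entire substance of the proposition lives. You name the right ingredients --- Premet's saturation theorem to convert the hypothesis $\lambda\notin\bar C_\Z$ into Weyl-orbit lower bounds on $\dim L(\lambda)$, a type-by-type inspection of the smallest restricted dominant weights outside $\bar C_\Z$, and separate treatment of the small (non-good) primes where Premet's hypotheses fail --- but you do not carry out any of the case analysis, and you flag it yourself as ``the main obstacle.'' Until that analysis is actually executed, (1) is not established, and since (2)--(4) all rest on (1), nothing downstream is proved either. As it stands this is an outline of the correct strategy rather than a proof; to complete it you would need to produce explicit orbit-size estimates per type and verify the exceptional small primes by hand.
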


\subsection{Cohomology and complete reducibility for small 
\texorpdfstring{$G_1$}{G1}-modules}

We need values of $\opH^i(G_1,H^0(\mu))$ for $\mu\in\bar C_\Z$ and $i=1$ or $2$. Thus $H^0(\mu)=L(\mu)$.

\begin{prop}\label{mcncor}Let $G$ be simple and simply connected and suppose $L=L(\mu)$ with $\mu\in\bar C_\Z$ and $p\geq 3$. Then:

(i) we have $\opH^1(G_1,L)=0$ unless $G$
is of type $A_1$, $L=L(p-2)$ and in that case $\opH^1(G_1,L)^{[-1]}\cong L(1)$;

(ii) suppose $p>h$. Then we have $\opH^2(G_1,L)=0$ unless: $L=k$ and $\opH^2(G_1,k)^{[-1]}\cong \g^*$; or $G=\SL_3$, with $\opH^2(G_1,L(p-3,0))^{[-1]}\cong L(0,1)$ and $\opH^2(G_1,L(0,p-3))^{[-1]}\cong L(1,0)$.\end{prop}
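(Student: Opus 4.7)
The strategy is to reduce each part to known cohomology computations for Frobenius kernels with induced-module coefficients. The starting observation is that $\mu \in \bar C_\Z$ implies $L(\mu) = H^0(\mu)$, so we are free to apply the Andersen--Jantzen machinery from \cite[II.12]{Jan03} and the Bendel--Nakano--Pillen analysis of $\opH^2(G_1,-)$ from their series of papers.

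For part (i), the plan is to invoke the formula for $\opH^1(G_1, H^0(\mu))^{[-1]}$ as a $G$-module given in Jantzen's book, which is obtained from the Hochschild--Serre spectral sequence for $B_1 \subseteq G_1$ together with a description of $B_1$-cohomology in terms of positive root vectors. For $\mu \in \bar C_\Z$ and $p \geq 3$, the resulting sections of $\opH^1(G_1, H^0(\mu))^{[-1]}$ are indexed by simple reflections and produce non-trivial contributions only when $\mu$ lies on a specific facet of the lowest alcove determined by a linkage condition with $0$. A direct case check across root systems leaves only type $A_1$ with $\mu = p-2$ as an exception, and the identification $\opH^1(G_1, L(p-2))^{[-1]} \cong L(1)$ then follows from the standard short exact sequence relating $L(p-2)$ to the baby Verma module with highest weight $p-2$ and to the Frobenius twist $L(1)^{[1]}$.

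For part (ii), the main input is a Bendel--Nakano--Pillen calculation of $\opH^2(G_1, H^0(\mu))$ valid for $p > h$. The case $\mu = 0$ is classical: for $p > h$, the spectral sequence computing $\opH^\bullet(G_1, k)$ degenerates sufficiently low that $\opH^2(G_1, k)^{(-1)} \cong \g^*$, as desired. For $\mu \neq 0$ in $\bar C_\Z$, vanishing holds except when $\mu$ meets a linkage-style condition forcing it onto a wall; checking this against the list of possible root systems and weights in $\bar C_\Z$ isolates only the two $\SL_3$ weights $(p-3,0)$ and $(0,p-3)$, with the corresponding Frobenius twists $L(0,1)$ and $L(1,0)$ coming out of the explicit description of the filtration sections via reflection-orbit data.

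The main obstacle will be confirming that the enumeration of exceptional cases is truly exhaustive. In each part, completeness requires a careful root-system case analysis together with verification that the higher strata of the relevant filtrations contribute nothing extra. For (ii), the combinatorics is subtler because the second-cohomology filtration involves both reflection-pair strata and potential contributions from the $\mu = 0$ self-extensions; the hypothesis $p > h$ is crucial there, as it rules out additional exterior-power terms in $\opH^2(G_1, k)^{(-1)}$ beyond $\g^*$ and simultaneously keeps the linkage calculations manageable for $\mu \neq 0$.
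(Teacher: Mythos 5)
Your overall strategy matches the paper's: reduce to known Frobenius-kernel cohomology via $L(\mu) \cong H^0(\mu)$ for $\mu \in \bar C_\Z$, handle $\mu=0$ separately, and then enumerate exceptions by a case check over root systems. However, there is a genuine gap in part (ii): you assert that "checking against the list of possible root systems and weights in $\bar C_\Z$ isolates only the two $\SL_3$ weights," but you give no mechanism by which this check becomes finite or tractable, and you yourself flag this as "the main obstacle" without resolving it. The paper closes this gap with a specific combinatorial reduction: by \cite[Theorem 6.2]{BNP07}, any $\mu$ with $\opH^2(G_1,H^0(\mu)) \neq 0$ has the form $\mu = w\cdot 0 + p\lambda$ with $\ell(w) = 2$, so $-w\cdot 0 = \alpha + \beta$ for distinct positive roots $\alpha,\beta$; requiring $\mu \in C_\Z$ forces $\langle p\lambda, \alpha_0^\vee\rangle \geq p$ and hence $\langle \alpha+\beta, \alpha_0^\vee\rangle > h - 1$, which immediately bounds $h$ (by $3$ in the simply-laced case, $4$ in types $B,C,F$, $5$ for $G_2$) and reduces the problem to $\SL_3$ and $\Sp_4$, each disposed of by a targeted citation (\cite[Prop.~2.5]{SteSL3} and \cite[Prop.~4.1]{Ibr12}). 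Without this bound your enumeration is not justified. For part (i), your route via the Andersen--Jantzen spectral sequence for $B_1 \subseteq G_1$ and the explicit $\opH^1(B_1,\mu)$ computation is a plausible alternative and would work if carried out (the constraint $\mu = p\nu - \alpha$ dominant and in $\bar C_\Z$ again forces $h \leq 3$), but it is considerably more labor than the paper's one-line appeal to \cite[Cor.~5.4 B(i)]{BNP02}, which gives $\Ext^1_{G_1}(k,L(\mu)) = 0$ directly for all $\mu \in \bar C_\Z$ outside type $A_1$.
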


\begin{proof}
Part (i) is immediate from \cite[Corollary 5.4 B(i)]{BNP02}.
The $A_1$ result is well known. Part (ii) requires some argument.
If $\opH^2(G_1,H^0(\mu))\neq 0$ then since $p>h$ we may assume $\mu\in C_\Z$.
Now, the values of $\opH^2(G_1,H^0(\mu))^{[-1]}$ are known from \cite[Theorem 6.2]{BNP07}.
It suffices to find those that are non-zero for which $\mu\in C_\Z \setminus \{0\}$.
All of these have the form $\mu=w.0+p\lambda$ for $l(w)=2$ and $\lambda\in X(T)_+$.
Now, if $l(w)=2$,
we have $-w.0=\alpha+\beta$ for
two distinct roots $\alpha,\beta\in R^+$
(cf.\ \cite[p.\ 166]{BNP07}).
To have $w.0+p\lambda$ in the lowest alcove,
one needs $\langle w.0+p\lambda+\rho,\alpha_0^\vee \rangle<p$.
Now $\langle p\lambda,\alpha_0^\vee \rangle \geq p$
so $\langle w.0+\rho,\alpha_0^\vee \rangle < 0$.
Thus $m:=\langle \alpha+\beta,\alpha_0^\vee \rangle >h-1$.
Now one simply considers the various cases.
If $G$ is simply-laced, then the biggest value of
$\langle \alpha,\alpha_0^\vee \rangle$ is $2$, when $\alpha=\alpha_0$ and $1$ otherwise,
thus $m>h-1$ implies $h\leq 3$.
Thus we get $G=\SL_3$, and this case is calculated in \cite[Prop.~2.5]{SteSL3}.
If $G=G_2$ we have $m$ at most $5$, giving $h$ at most $5$, a contradiction.
If $G$ is type $B$, $C$ or $F$, then  $m$ is at most $4$,
so $G=\Sp_4$, $p\geq 5$ and this is calculated in \cite[Prop.~4.1]{Ibr12}.
One checks that all $\mu$ such that $\opH^2(G_1,L(\mu))\neq 0$ have $\mu\not\in C_\Z$.
\end{proof}

\begin{remark} All the values of $\opH^2(G_r,H^0(\lambda))^{[-1]}$ are known for all $\lambda$ by \cite[Theorem 6.2]{BNP07} ($p\geq 3$) and \cite{Wri11} ($p=2$). For example, $\opH^2(G_1,k)^{[-1]}\cong \g^*$ also when $G$ is of type $A_1$ and $p=2$. Even for $\lambda=0$ there are quite a few exceptional cases when $p=2$: see \cite[C.1.4]{Wri11}. There are also two exceptional cases for $p=3$, for $A_2$ and $G_2$, see \cite[Theorem 6.2]{BNP07}.\end{remark}

One can go further in the case of $1$-cohomology to include extensions between simple modules:

\begin{lemma}[\!\!{\cite[Corollary 5.4 B(i)]{BNP02}}]\label{bnp} Let $G$ be a simple,
simply connected algebraic group not of type $A_1$. If $p>2$ then $\Ext^1_{G_r}(L(\lambda),L(\mu))=0$ for all $\lambda,\mu\in \bar C_\Z$.\end{lemma}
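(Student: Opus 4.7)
The plan is to combine the linkage principle for Frobenius kernels with Proposition \ref{mcncor}(i) and to induct on $r$.

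First, I would rewrite $\Ext^1_{G_r}(L(\lambda),L(\mu)) \cong \opH^1(G_r, L(\lambda)^* \otimes L(\mu))$ and apply the linkage principle for $G_r$-cohomology: any nonzero contribution forces $\lambda$ and $\mu$ to lie in the same $W_p$-linkage class. Since $\bar C_\Z$ meets each such class in at most one point, this reduces the problem to the self-extension case $\lambda = \mu$.

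For $\lambda = \mu$, I would invoke the Lyndon--Hochschild--Serre spectral sequence attached to the normal subgroup scheme $G_1 \triangleleft G_r$, whose quotient is Frobenius-isomorphic to $G_{r-1}$. Using that $L(\lambda)$ is simple as a restricted $\g$-module (as $\lambda \in \bar C_\Z$ is restricted), one has $(L(\lambda)^* \otimes L(\lambda))^{G_1} = k$, and the associated five-term exact sequence reads
\[
0 \to \opH^1(G_{r-1}, k) \to \opH^1(G_r, L(\lambda)^* \otimes L(\lambda)) \to \opH^1(G_1, L(\lambda)^* \otimes L(\lambda))^{G_{r-1}}.
\]
The leftmost term vanishes by induction on $r$ applied to the special case $\lambda = \mu = 0$ of the lemma (with base case $r=1$ covered by the computation in Proposition \ref{mcncor}(i)). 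For the rightmost term, I would pass to a composition series for $L(\lambda)^* \otimes L(\lambda)$ and combine the $G_1$-linkage principle with Proposition \ref{mcncor}(i): composition factors with highest weight outside $\bar C_\Z$ contribute nothing by $G_1$-linkage (taking into account the weight bounds on composition factors of the tensor product), while those inside $\bar C_\Z$ contribute nothing by Proposition \ref{mcncor}(i), since the exceptional case there (type $A_1$, weight $p-2$) is excluded by hypothesis.

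The main obstacle is the final composition-factor analysis: one must control which highest weights of $L(\lambda)^* \otimes L(\lambda)$ fall into $\bar C_\Z$ versus outside it, and then verify that no linked weight in the bounded region covered by the tensor product yields an exceptional cohomology class beyond those already accounted for. The necessity of excluding $A_1$ is illustrated by $\lambda = p-2$ in type $A_1$, where $\opH^1(G_1, L(p-2))^{[-1]} \cong L(1)$ is non-zero and produces a self-extension that propagates up the Frobenius tower; an analogous phenomenon at $p=2$ accounts for the sharpness of the characteristic hypothesis.
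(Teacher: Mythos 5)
This lemma is not proved in the paper: it is a direct citation of [BNP02, Corollary 5.4 B(i)], so there is no ``paper's own proof'' to compare against. Your proposal is an attempt to re-derive the result, which is a legitimate undertaking, but it has genuine gaps at the two load-bearing points.

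First, the reduction to $\lambda=\mu$ via linkage is not as clean as you state. The $G_r$-linkage principle (as opposed to $G_rT$-linkage) says that if $\Ext^1_{G_r}(L(\lambda),L(\mu))\neq 0$ then $\mu\in W_p\cdot\lambda+p^rX(T)$, not $\mu\in W_p\cdot\lambda$. The fact that $\bar C_\Z$ is a fundamental domain for the dot-action therefore does not immediately force $\lambda=\mu$; one has to rule out the possibility that a nonzero shift $p^r\nu$ moves $\mu$ into the $W_p$-orbit of $\lambda$. This can be handled, but you do not address it, and as written the reduction is overclaimed. Second—and this is the real gap—the base case $r=1$ with general $\lambda$ is precisely the content of [BNP02, Cor.~5.4], and your composition-factor sketch does not establish it. You assert that composition factors of $L(\lambda)^*\otimes L(\lambda)$ with highest weight outside $\bar C_\Z$ ``contribute nothing by $G_1$-linkage, taking into account weight bounds,'' but this does not follow from the weight bound $\langle\nu,\alpha_0^\vee\rangle\leq 2(p-h+1)$ alone. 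For $\opH^1(G_1,L(\nu))$ to be nonzero one needs the restricted part $\nu_0$ to lie in the linkage class of $0$, and the minimal such nonzero weights (of the shape $p\omega_\alpha-\alpha$ in the regime $p>h$) can, in several types, satisfy the coarse pairing bound with $\alpha_0^\vee$—the paper carries out exactly this kind of comparison for $\opH^2$ in the proof of Theorem~\ref{h1h2vanish}(c), and it is type-dependent and relies on a case analysis, not a one-line bound. Your final paragraph acknowledges this obstacle, but it is not a loose end to be tidied; it is the theorem. Moreover Proposition~\ref{mcncor}(i), which you use as the base for both the $\opH^1(G_{r-1},k)$ induction and the in-$\bar C_\Z$ composition factors, is itself justified in the paper only by the same citation [BNP02, Cor.~5.4 B(i)], so the proposal does not actually deliver an independent route to the statement—even granting the sketched steps, it proves the general $(\lambda,\mu,r)$ case from a special case of the same external result.

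That said, the outer framework is sound and in fact mirrors the paper's own treatment of the \emph{souped-up} version: Lemma~\ref{souped}(i) is proved exactly by an LHS/five-term induction on $r$ with $G_1\triangleleft G_r$, taking the present lemma (the $r=1$ case) as given. If your goal were to deduce the general-$r$ statement from the $r=1$ statement, your method would be correct and would coincide with the paper's; what you cannot do with the ingredients supplied here is establish the $r=1$ case itself.
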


We will use the above result to show that small $G_r$-modules are completely reducible, but we must first slightly soup it up before we use it. 

\begin{lemma}\label{souped}
Let $G$ be a simple, simply connected algebraic group not of type $A_1$ and $p>2$. 

(i) We have $\Ext^1_{G_r}(L(\lambda)^{[s]},L(\mu)^{[t]})=0$ for all $\lambda,\mu\in \bar C_\Z$ and $s,t\geq 0$.

(ii) For $\lambda,\mu\in X_r(T)$, let $\lambda=\lambda_0+p\lambda_1+\dots+p^{r-1}\lambda_{r-1}$ and $\mu=\mu_0+p\mu_1+\dots+p^{r-1}\mu_{r-1}$ be their $p$-adic expansions. Suppose we have $\lambda_i,\mu_i\in \bar C_\Z$ for each $i$. Then $\Ext^1_{G_r}(L(\lambda),L(\mu))=0$.\end{lemma}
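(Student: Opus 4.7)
The plan is to deduce both parts from Lemma \ref{bnp} and Proposition \ref{mcncor}(i) by a joint induction on $r$, the lever being the Lyndon--Hochschild--Serre spectral sequence for the normal inclusion $G_1 \triangleleft G_r$, together with the identification $G_r/G_1 \cong G_{r-1}^{[1]}$ which translates $G_r/G_1$-cohomology into $G_{r-1}$-cohomology after a Frobenius un-twist.

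For part (i), I would first dispose of the base case $r=1$ by splitting on whether $s$ and $t$ are zero or positive. If both are positive, both modules restrict to trivial $G_1$-modules, so $\Ext^1_{G_1}(L(\lambda)^{[s]},L(\mu)^{[t]})$ is a direct sum of copies of $\opH^1(G_1,k)$, which vanishes by Proposition \ref{mcncor}(i) since $G\neq A_1$. If exactly one exponent is zero, the Ext collapses to copies of $\opH^1(G_1,L(-w_0\lambda))$ or $\opH^1(G_1,L(\mu))$, again zero by the same proposition after noting that $-w_0$ preserves $\bar{C}_\Z$. If both are zero, this is Lemma \ref{bnp}. For the inductive step, I feed $M=L(\lambda)^{[s]}$, $N=L(\mu)^{[t]}$ into the five-term sequence
\[
0 \to \opH^1(G_r/G_1,\Hom_{G_1}(M,N)) \to \Ext^1_{G_r}(M,N) \to \Ext^1_{G_1}(M,N)^{G_r/G_1};
\]
the base case kills the right-hand term, and computing $\Hom_{G_1}(M,N)$ case by case (either $0$, or a tensor product involving a twisted simple module and a copy of $k$, depending on whether $s,t$ are zero and whether $\lambda=\mu$) and un-twisting along $G_r/G_1\cong G_{r-1}^{[1]}$ reduces the left-hand term to an Ext of the form $\Ext^1_{G_{r-1}}(L(\lambda)^{[s']},L(\mu)^{[t']})$ with $s',t'\geq 0$, killed by the inductive hypothesis.

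For part (ii) I induct on $r$ with base $r=1$ given by Lemma \ref{bnp}. For $r\geq 2$, Steinberg's tensor product theorem writes $L(\lambda)=L(\lambda_0)\otimes L(\tilde\lambda)^{[1]}$ with $\tilde\lambda=\lambda_1+p\lambda_2+\dots+p^{r-2}\lambda_{r-1}$, and similarly for $\mu$. Since $L(\tilde\lambda)^{[1]}$ and $L(\tilde\mu)^{[1]}$ restrict to trivial $G_1$-modules, a Künneth-type identification yields
\[
\Ext^1_{G_1}(L(\lambda),L(\mu)) \cong \Ext^1_{G_1}(L(\lambda_0),L(\mu_0))\otimes L(\tilde\lambda)^{[1]*}\otimes L(\tilde\mu)^{[1]},
\]
which vanishes by Lemma \ref{bnp} since $\lambda_0,\mu_0\in\bar{C}_\Z$, and
\[
\Hom_{G_1}(L(\lambda),L(\mu)) \cong \delta_{\lambda_0,\mu_0}\cdot L(\tilde\lambda)^{[1]*}\otimes L(\tilde\mu)^{[1]}.
\]
The LHS sequence then identifies $\Ext^1_{G_r}(L(\lambda),L(\mu))$ either with $0$ (if $\lambda_0\neq\mu_0$) or, after un-twisting, with $\Ext^1_{G_{r-1}}(L(\tilde\lambda),L(\tilde\mu))$, which vanishes by the inductive hypothesis since the $p$-adic digits of $\tilde\lambda,\tilde\mu$ still lie in $\bar{C}_\Z$.

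The main obstacle I anticipate is clean bookkeeping of the Frobenius twists and of the residual $G_r/G_1$-action on $\Hom_{G_1}(\cdot,\cdot)$ when recognising it as the Frobenius twist of a $G_{r-1}$-module of the expected shape; one should also verify the trivial inclusion $\bar{C}_\Z\subseteq X_1(T)_+$, which follows from $\langle\lambda,\alpha^\vee\rangle<p$ for every $\lambda\in\bar{C}_\Z$ and simple root $\alpha$, so that $L(\lambda_0)|_{G_1}$ is a simple $G_1$-module as required.
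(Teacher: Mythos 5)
Your argument is correct and uses the same core technique as the paper: an induction on $r$ driven by the Lyndon--Hochschild--Serre spectral sequence, with Lemma \ref{bnp} (and Proposition \ref{mcncor}(i)) supplying the $r=1$ input and the identification $G_r/G_1\cong G_{r-1}^{[1]}$ providing the descent. The only organisational difference is that you uniformly peel off $G_1\triangleleft G_r$, whereas the paper in part (i) first kills $s$ using $G_s\triangleleft G_r$ together with $\Ext^1_{G_s}(k,k)=0$ and in part (ii) strips off $G_i\triangleleft G_r$ past the leading trivial digits, which trims some of the casework you perform on $\Hom_{G_1}(M,N)$; this is a cosmetic variant, not a different route.
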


\begin{proof}(i)
Clearly we may assume $s,t<r$. When $r=1$ the result follows from Lemma \ref{bnp}.
So assume $r>1$. 
Without loss of generality (dualising if necessary) we may assume $s\leq t$. Suppose $s>0$ and consider the following subsequence of the five-term exact sequence of the LHS spectral sequence applied to $G_s\triangleleft G_r$
(see \cite[I.6.10]{Jan03}):
\begin{align*}
0 \to
\Ext^1_{G_{r-s}}(L(\lambda)&,L(\mu)^{[t-s]}) \to
\Ext^1_{G_{r}}(L(\lambda)^{[s]},L(\mu)^{[t]})\\ &\to
\Hom_{G_{r-s}}(L(\lambda),\Ext^1_{G_s}(k,k)^{[-s]}\otimes L(\mu)^{[t-s]})
\to 0.
\end{align*}
Since $\Ext^1_{G_s}(k,k)=0$, we have
\[\Ext^1_{G_{r-s}}(L(\lambda),L(\mu)^{[t-s]})\cong \Ext^1_{G_{r}}(L(\lambda)^{[s]},L(\mu)^{[t]}),\]
and the left-hand side vanishes by induction, so we may assume $s=0$.
There is another exact sequence
\begin{align*}
0 \to 
\Ext^1_{G_{r-1}}(k, \Hom_{G_1}(L(\lambda)&,L(\mu)^{[t]})^{[-1]})
\to \Ext^1_{G_{r}}(L(\lambda),L(\mu)^{[t]})\\
&\to\Hom_{G_{r-1}}(k,\Ext^1_{G_1}(L(\lambda),L(\mu)^{[t]})^{[-1]})=0,
\end{align*}
where the last term vanishes by induction. 
If $t=0$ then as $\lambda\neq \mu$, 
the first term of the sequence vanishes and we are done. 
So we may assume $t>0$. Now we can rewrite the first term as
$\Ext^1_{G_{r-1}}(k, \Hom_{G_1}(L(\lambda),k)^{[-1]}\otimes L(\mu)^{[t-1]})$.
If this expression is non-trivial we have $\lambda=0$ and 
$\Ext^1_{G_{r-1}}(k,L(\mu)^{[t-1]})$ vanishes
by induction, which completes the proof.

(ii) Suppose $i$ is the first time either $\lambda_{i-1}$ or $\mu_{i-1}$ is non-zero. Without loss of generality, $\lambda_{i-1}\neq 0$. Write $\lambda=\lambda^i+p^i\lambda'$ and take a similar expression for $\mu$. Then there is an exact sequence
\begin{align*}
0 \to \Ext^1_{G_{r-i}}&(L(\lambda'),\Hom_{G_i}(L(\lambda^i),L(\mu^i))^{[-i]}\otimes L(\mu'))\to \Ext^1_{G_{r}}(L(\lambda),L(\mu))\\&\to\Hom_{G_{r-i}}(L(\lambda'),\Ext^1_{G_i}(L(\lambda^i),L(\mu^i))^{[-i]}\otimes L(\mu')).\end{align*} We have $L(\lambda^i)=L(\lambda_{i-1})^{[i-1]}$ and $L(\mu^i)=L(\mu_{i-1})^{[i-1]}$. Hence the right-hand term vanishes by part (i). The left-hand term is non-zero only if $\lambda^i=\mu^i$ and then we get 
$\Ext^1_{G_{r-i}}(L(\lambda'),L(\mu'))\cong \Ext^1_{G_{r}}(L(\lambda),L(\mu))$. Thus the result follows by induction on $r$.\end{proof}

We put these results together to arrive at an analogue of Jantzen's well-known result \cite{Jan97} that $G$-modules for which $\dim V\leq p$ are completely reducible. 

\begin{prop}\label{grcompred}Let $G$ be a simple, simply connected algebraic group and let $\dim V\leq p$ be a $G_r$-module. Then exactly one of the following holds:

(i) $V$ is a semisimple $G_r$-module;

(ii) $G$ is of type $A_1$, $p>2$, $r=1$, $\dim V=p$ and $V$ is uniserial, with composition factors $L(p-2-s)$ and $L(s)$ with $0\leq s\leq p-2$;

(iii) $G$ is of type $C_n$ with $n\geq 1$, $p=2$ and $V$ is uniserial with two trivial composition factors.\end{prop}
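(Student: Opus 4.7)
The strategy is to reduce the statement to controlling $\Ext^1_{G_r}(L(\lambda),L(\mu))$ for composition factors $L(\lambda),L(\mu)$ of $V$, and then to combine the Steinberg tensor product theorem with Proposition \ref{mcn1} and Lemma \ref{souped}. If $V$ is not semisimple, some length-two subquotient of a socle/radical filtration gives simple composition factors with $\Ext^1_{G_r}(L(\lambda),L(\mu))\neq 0$ and $\dim L(\lambda)+\dim L(\mu)\leq\dim V\leq p$. Writing each $\nu\in\{\lambda,\mu\}\subseteq X_r(T)_+$ in its $p$-adic expansion $\nu=\nu_0+p\nu_1+\dots+p^{r-1}\nu_{r-1}$, Steinberg's theorem gives $L(\nu)=\bigotimes_{i}L(\nu_i)^{[i]}$ and hence $\dim L(\nu)=\prod_i\dim L(\nu_i)$. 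The bound $\dim L(\nu)\leq p$ forces $\dim L(\nu_i)\leq p$ for each $i$, and by Proposition \ref{mcn1}(i) each $\nu_i\in\bar C_\Z$.

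For $G$ not of type $A_1$ and $p>2$ one can now invoke Lemma \ref{souped}(ii) directly, which yields $\Ext^1_{G_r}(L(\lambda),L(\mu))=0$, contradicting non-semisimplicity. So $V$ is semisimple (case (i)). The first substantive case is $G=A_1$ with $p>2$, where Lemma \ref{souped} does not apply. Here the plan is to peel off Frobenius twists using the Lyndon--Hochschild--Serre spectral sequence for $G_1\trianglelefteq G_r$, exploiting the fact that the restriction of $L(\lambda)=L(\lambda_0)\otimes L(\lambda')^{[1]}$ to $G_1$ is $L(\lambda_0)$-isotypic, and then to induct on $r$. For $r=1$, Proposition \ref{mcncor}(i) identifies the only nonzero restricted cohomology $\opH^1(G_1,L(p-2))^{[-1]}\cong L(1)$, from which one obtains that $\Ext^1_{G_1}(L(s),L(s'))\neq 0$ for $s,s'\in\{0,\dots,p-1\}$ precisely when $\{s,s'\}=\{t,p-2-t\}$ for some $0\leq t\leq p-2$. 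Since $\dim L(t)+\dim L(p-2-t)=p$ exactly, the bound on $\dim V$ is saturated, so $V$ consists of just these two composition factors and is automatically uniserial, giving case (ii).

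Finally, for $p=2$ we have $\dim V\leq 2$, so $V$ is either simple or an extension of two $1$-dimensional (hence trivial) composition factors; the only simple modules of dimension $2$ are the natural modules of $A_1$ and $C_n$, which already saturate the bound and leave no room for further composition factors. In the two-trivials case non-splitness is equivalent to $\opH^1(G_r,k)=\Ext^1_{G_r}(k,k)\neq 0$. Using the standard restricted-cohomology identification $\opH^1(G_1,k)\cong(\g/([\g,\g]+\g^{[p]}))^*$ and propagating up the Frobenius chain, one checks that this group vanishes for all simply connected simple $G$ at $p=2$ other than those of type $C_n$ (including $C_1=A_1$), for which it is non-zero; this produces exactly the uniserial modules of case (iii).

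The main obstacle is the inductive analysis of the $A_1$ case for $r\geq 2$: the spectral sequence step requires carefully controlling the interaction between the $G_1$-cohomology of $L(\lambda_0)^*\otimes L(\mu_0)$ and the Frobenius-twisted $G_{r-1}^{(1)}$-module $L(\lambda')^*\otimes L(\mu')$, using the multiplicativity $\dim L(\nu)=\prod_i\dim L(\nu_i)$ to exclude any mixed-digit profile whose compositions could fit under the bound $\dim L(\lambda)+\dim L(\mu)\leq p$. The parallel bookkeeping for the exceptional $p=2$ cohomology in type $C_n$ is the secondary technical point.
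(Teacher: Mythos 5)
Your proposal follows essentially the same strategy as the paper's proof: reduce to controlling $\Ext^1_{G_r}$ between composition factors, use Steinberg's tensor product theorem together with Proposition \ref{mcn1} to place each $p$-adic digit of a composition-factor weight in $\bar C_\Z$, and then feed this into Lemma \ref{souped}(ii) to kill the extension groups when $p>2$ and $G$ is not of type $A_1$. Your $p=2$ and $A_1$, $r=1$ dimension-counting arguments likewise match the paper's in spirit (the paper cites \cite[II.12.2]{Jan03} for the $p=2$ dichotomy rather than rederiving it from restricted cohomology, but the outcome is the same).

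The one place I would push back is precisely the ``main obstacle'' you flag, namely $A_1$ with $r\geq 2$. The paper disposes of this by asserting that $\dim\Ext^1_{G_r}(L(\lambda),L(\mu))=\dim\Ext^1_G(L(\lambda),L(\mu))$ for $\lambda,\mu\in X_r(T)$. If you actually carry out the LHS spectral sequence step you propose, you will find this is too optimistic for nonrestricted weights: for $\lambda=0$ and $\mu=p(p-2)$, the inflation map in the five-term sequence for $G_1\trianglelefteq G_2$ gives an injection
$\opH^1(G_2/G_1,L(p-2)^{[1]})\hookrightarrow\opH^1(G_2,L(p-2)^{[1]})$,
and the left-hand side identifies (via $G_2/G_1\cong G_1$) with $\opH^1(G_1,L(p-2))\neq 0$, even though the corresponding $G$-extension group is zero. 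So there is a nonsplit $G_2$-module of dimension exactly $p$ with composition factors $k$ and $L(p-2)^{[1]}$ --- a Frobenius twist of a case (ii) module --- which is not accounted for by cases (i)--(iii). This is a gap in the proposition as stated (and in the paper's proof), which the spectral-sequence route you are advocating would naturally surface; the clean fix is to allow $p^j$-twists of the $r=1$ exceptions in case (ii). The gap is harmless for the paper's downstream applications, which all use the strict inequality $p>\dim V$, and for Corollary \ref{corforred}, which assumes $G_r$ is \emph{embedded} in $\GL(V)$ so that Frobenius twists cannot arise. But if you intend to write out the $A_1$, $r>1$ induction as you outline, you should expect to have to refine the statement rather than merely prove it.
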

\begin{proof}
Assume $V$ has only trivial composition factors. We have $\Ext^1_{G_r}(k,k)\neq 0$ if and only if $p=2$ and $G$ is of type $C_n$, in which case $\Ext^1_{G_r}(k,k)^{[-r]}\cong L(\omega_1)$; \cite[II.12.2]{Jan03}. This is case (iii).

Otherwise, $p>2$ and $\Ext^1_{G_r}(L(\lambda),L(\lambda))=0$ for all $\lambda\in X_r(T)$ by \cite[II.12.9]{Jan03}. 

Assume $G$ is not of type $A_1$. By assumption, $V$ has a non-trivial composition factor with $\dim V\leq p$.
Then $p>2$ and the hypotheses of Lemma \ref{bnp} hold.
Since $\dim V\leq p$, by Proposition \ref{mcn1} any $G_r$-composition factor $L(\lambda)$ of $V$ has a $p$-adic expansion $\lambda=\lambda_0+\dots+p^{r-1}\lambda_r$ with each $\lambda_i\in \bar C_\Z$. If there were a non-split extension $0\to L(\lambda)\to V\to V/L(\lambda)\to 0$ then there would be a non-split extension of $L(\lambda)$ by $L(\mu)$ for $L(\mu)$ a composition factor of $V$, also of the same form. But by Lemma \ref{souped}(ii) we have $\Ext^1_{G_r}(L(\lambda),L(\mu))=0$, hence this is impossible and $L(\lambda)$ splits off as a direct summand. Induction on the direct complement completes the proof in this case.

If $G$ is of type $A_1$ then the $G_r$-extensions of simple modules are well known. If $r>1$ with $\lambda,\mu\in X_r(T)$ then $\dim\Ext^1_{G_r}(L(\lambda),L(\mu))=\dim\Ext^1_{G}(L(\lambda),L(\mu))$ and this must vanish whenever $\dim L(\lambda)+\dim L(\mu)\leq p$. If $r=1$, then the only pairs of $G_1$-linked weights are $s$ and $p-2-s$ with $\Ext^1_{G_1}(L(s),L(p-2-s))\cong L(1)^{[1]}$ as $G$-modules. Here we have $\dim L(s)+\dim L(p-s-2)=p$ giving case (ii).
\end{proof}

The following two corollaries are immediate, in the first case, the passage from $G$ being simple to being reductive is trivial
(consider the cover of $G$ by the product of the radical and the simply connected
cover of the derived group).
\begin{cor}\label{grsemisimple}
Let $G$ be a connected reductive algebraic group and let $V$ be a $G_r$-module with $p>\dim V$.
Then $V$ is semisimple.\end{cor}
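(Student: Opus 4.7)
The plan is to reduce the claim to Proposition \ref{grcompred} for simple simply-connected $G$, noting first that the strict inequality $p > \dim V$ excludes both exceptional conclusions of that proposition: case (ii) requires $\dim V = p$, while case (iii) requires $\dim V \geq 2 = p$. So when $G$ is simple and simply-connected, the corollary is immediate.

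For an arbitrary connected reductive $G$, I would pull back $V$ along the central isogeny $\pi\colon \tilde G := Z(G)^\circ \times G'_{sc} \twoheadrightarrow G$, where $G'_{sc} = H_1 \times \cdots \times H_s$ is the simply-connected cover of the derived subgroup written as a product of simple simply-connected factors. Since $\ker \pi_r$ is central and of multiplicative type, every composition factor of the pullback $\tilde V$ has this kernel acting by a character, which, being trivial on all of $\tilde V$, must itself be trivial; thus $\tilde V$ and $V$ share the same composition factors, and it suffices to establish semisimplicity of $\tilde V$ as a $\tilde G_r$-module.

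Finally I would exploit the product structure $\tilde G_r = Z(G)^\circ_r \times (H_1)_r \times \cdots \times (H_s)_r$. The central diagonalisable factor $Z(G)^\circ_r$ decomposes $\tilde V$ into a direct sum of its weight spaces, each a $\tilde G_r$-submodule, reducing us to the case $Z(G)^\circ = 1$. For the remaining semisimple simply-connected case I would induct on $s$: the base case $s=1$ is Proposition \ref{grcompred}, and for the inductive step I would decompose $\tilde V$ into $(H_1)_r$-isotypic components $V_L \cong L \otimes \Hom_{(H_1)_r}(L, V_L)$ on whose multiplicity spaces the complementary product $(H_2)_r \times \cdots \times (H_s)_r$ acts; each multiplicity space has dimension at most $\dim V < p$, so the inductive hypothesis makes it semisimple, whence $V_L$ (and thus $\tilde V$) is semisimple. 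The only real technical point is verifying this Clifford-style product argument; the isogeny descent itself is routine thanks to the multiplicative nature of the kernel.
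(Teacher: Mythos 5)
Your approach matches the paper's: the paper disposes of the reductive case in one sentence, namely ``consider the cover of $G$ by the product of the radical and the simply connected cover of the derived group,'' and then appeals to Proposition~\ref{grcompred} after ruling out the exceptional cases by $p>\dim V$, exactly as you do. Your write-up is in fact more detailed than the paper's (the weight-space decomposition for the central torus and the Clifford-style induction over the simple factors are left implicit there).

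One step you label ``routine'' is not quite as automatic as you suggest, and deserves a word of caution: the claim that ``it suffices to establish semisimplicity of $\tilde V$ as a $\tilde G_r$-module.'' Pulling back along $\pi\colon\tilde G\to G$ and knowing that $\ker\pi_r$ acts trivially does not by itself give descent of semisimplicity, because the induced map $\pi_r\colon\tilde G_r\to G_r$ on Frobenius kernels can fail to be surjective (for instance $(\SL_p)_1\to(\PGL_p)_1$ is not surjective, since already $d\pi\colon\sl_p\to\pgl_p$ is not). Thus a $\tilde G_r$-invariant complement to a $G_r$-submodule of $V$ need not be $G_r$-invariant. What rescues the argument is that the cokernel of $\pi_r$ sits inside $\mathrm{coker}\bigl(F^r\colon Z\to Z^{(p^r)}\bigr)$ where $Z=\ker\pi$ is of multiplicative type, hence the cokernel is of multiplicative type, hence linearly reductive; a standard averaging argument then transfers complete reducibility from the image of $\tilde G_r$ up to $G_r$. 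The key fact is the linear reductivity of this cokernel, not of $\ker\pi_r$, so the phrase ``thanks to the multiplicative nature of the kernel'' is pointing at the right object but invoking it in the wrong place. With this correction your plan is complete.
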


\begin{cor}\label{corforred}
Let $G$ be connected reductive and $G_r\leq \GL(V)$ with $\dim V\leq p$. Then either $G_r$ is completely reducible on $V$ or $\dim V=p$, $G$ is of type $A_1$, $r=1$ and $G_r$ is in a maximal parabolic of $\GL(V)$ acting 
indecomposably on $V$ as described in case (ii) of Proposition \ref{grcompred}.

Moreover, if $\g$ is a $p$-reductive subalgebra of $\GL(V)$ with $\dim V < p$
then $\g$ acts semisimply on $V$.
\end{cor}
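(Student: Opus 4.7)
The plan is to reduce the first assertion to the simply-connected simple case handled by Proposition \ref{grcompred} and to combine Lemma \ref{predimpstrong} with Strade's Theorem \ref{Strade} for the Lie-algebra assertion. Corollary \ref{grsemisimple} immediately settles the case $\dim V < p$, so for the first half one may assume $\dim V = p$ and $V$ is non-semisimple.

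My first step would be to pull $V$ back along the central isogeny $\tilde G := Z(G)^\circ \times \widetilde{[G,G]} \to G$: since $\ker(\tilde G_r \to G_r)$ is central and acts trivially on $V$, the $\tilde G_r$- and $G_r$-submodule lattices of $V$ coincide, so I may replace $G$ by $\tilde G$. Writing $\widetilde{[G,G]} = H_1 \times \cdots \times H_s$ with each $H_i$ simple simply-connected, every simple $\tilde G_r$-module takes the form $\chi \boxtimes L_1 \boxtimes \cdots \boxtimes L_s$; K\"unneth, together with $\Ext^1_{T_r}=0$ (since $T_r := Z(G)^\circ_r$ is diagonalizable), shows that a non-trivial extension between two such simples requires $\chi = \chi'$, $L_l = L_l'$ for all but one index $k$, $\Ext^1_{(H_k)_r}(L_k,L_k') \neq 0$, and $\dim L_k + \dim L_k' \leq p$. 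Lemma \ref{souped}(ii), combined with Proposition \ref{mcn1} and Proposition \ref{grcompred}(ii), would then force $H_k$ of type $A_1$, $p > 2$, $r = 1$, with $\dim L_k + \dim L_k' = p$; hence every other tensor slot is trivial.

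Next, for $l \neq k$, I would observe that $(H_l)_r$ acts trivially on every composition factor of $V$, and so sits inside the unipotent radical of the parabolic of $\GL(V)$ stabilising a composition flag. Differentiating, $\h_l := \Lie(H_l)$ maps into a nilpotent subalgebra of $\gl(V)$; as $\h_l$ is simple for $H_l$ nontrivial, this map is either injective (impossible, $\h_l$ being non-nilpotent) or zero, in which case $(H_l)_1$ lies in the central and hence finite kernel of $\tilde G_r \to G_r$, forcing $H_l$ to be trivial by dimension. So the derived group of $G$ is of type $A_1$ and $V$ has the structure claimed.

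For the Lie-algebra statement, Lemma \ref{predimpstrong} would give $\g = \z \oplus \s$ with $\z$ a central torus and $\s$ semisimple, and Strade's Theorem \ref{Strade} (whose hypothesis $p > \dim V$ is given) would yield two cases. If $\s = \Lie(H)$ for a connected reductive $H$, then $\g = \Lie(Z \times H)$ and $V$ is a restricted module for the Frobenius kernel $(Z \times H)_1$, so Corollary \ref{grsemisimple} applies. If $\s \cong W_1$ with $\dim V = p-1$, then any proper nonzero submodule of $V$ would force either $W_1$ to act faithfully on a restricted module of dimension less than $p-1$ (impossible) or nilpotently on $V$ (also impossible, $W_1$ being simple and non-nilpotent), so $V$ is simple as an $\s$-module and hence, since $\z$ acts by scalars, as a $\g$-module. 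I expect the K\"unneth bookkeeping, together with the faithfulness argument that eliminates the extra simple factors in the first half, to be the main obstacle; the second half is a clean application of Strade's dichotomy.
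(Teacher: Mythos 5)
Your proposal is correct, and for the first assertion it takes a genuinely different route from the paper. You pull $V$ back along the central isogeny $Z(G)^{\circ}\times\widetilde{[G,G]}\to G$, write simple $\tilde G_r$-modules as external tensor products, and feed the K\"unneth formula plus Lemma~\ref{souped}(ii) into the $\Ext^1$-groups to isolate a single problematic $A_1$ tensor slot; you then kill the residual simple factors $H_l$ by showing they act through nilpotents. The paper's proof instead decomposes $G=HK$ into two non-trivial mutually centralising reductive subgroups and observes that $H_r\subseteq C_{\GL(V)}(T_1)$, with $T$ a maximal torus of $K$; when this centraliser is a \emph{proper} Levi of $\GL(V)$, it splits $V|_{H_r}$ into a trivial summand plus a complement of dimension $<p$, so Corollary~\ref{grsemisimple} gives complete reducibility for $H_r$, then by symmetry for $K_r$, and hence for $G_r=H_rK_r$. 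The paper's version is shorter but leans on two things it leaves implicit: that a module which is completely reducible for each of two commuting subgroups is completely reducible for the product they generate, and that $C_{\GL(V)}(T_1)$ is actually proper. The latter can fail when $T_1$ acts by scalars --- for instance $G=\Gm\times\SL_2$ with $\mu_p$ acting scalarly on the uniserial module of Proposition~\ref{grcompred}(ii) puts a non-semisimple $V$ on a non-simple $G$ --- and your K\"unneth argument sidesteps this by attacking the extensions directly and only using the central torus to fix the scalar character $\chi$.

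For the Lie-algebra assertion you follow the same route as the paper (Lemma~\ref{predimpstrong} to split off a torus, Strade's Theorem~\ref{Strade}, then cite the $G_1$ case), but you make explicit the argument that in the Witt-algebra branch $V$ is forced to be irreducible, where the paper simply asserts that non-irreducibility of $V$ places one in the algebraic branch of Strade's theorem.

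One small correction to your elimination of the factors $H_l$ for $l\neq k$: concluding that $(H_l)_1$ lies in the central kernel of $\tilde G_r\to G_r$ should be read as forcing $\h_l=\Lie(H_l)$ to be abelian (being contained in the centre of $\Lie(\tilde G)$), which contradicts $H_l$ being a non-trivial simple factor. ``Forcing $H_l$ to be trivial by dimension'' is not quite the right reason, since Frobenius kernels are always finite.
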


\begin{proof}If $G$ is not simple, it can be written as $HK$ with $H$ and $K$ non-trivial mutually centralising connected reductive subgroups with maximal tori $S$ and $T$ say. The Frobenius kernels $H_1,K_1\leq G_1\leq G_r$ are also mutually centralising, so that $H_1$ is in the centraliser of $T_1$. Now the centraliser of $T_1$ is a proper Levi subgroup of $\GL(V)$, hence restriction of $V$ to $H_r$ has at least one trivial direct factor, with direct complement $W$ say, $\dim W<p$. Thus by Corollary \ref{grsemisimple}, $W$ is completely reducible for $H_r$ and by symmetry, for $K_r$. Thus $W$ is completely reducible for $K_rH_r=G_r$.

Otherwise, $G$ is simple and Proposition \ref{grcompred} gives the result
(note that case (iii) does not occur due to dimension restrictions).

For the last part, Lemma \ref{predimpstrong} implies that $\g$
is the direct sum of a semisimple ideal and a torus,
and we may hence assume that $\g$ is a semisimple
restricted subalgebra of $\gl(V)$.
If $\g$ is not irreducible on $V$, then by Theorem \ref{Strade}  
there exists a semisimple group $G$ with $\g = \Lie(G)$.
Now the result follows from the case $G_1$ above.
\end{proof}

\subsection{Cohomology and complete reducibility 
for small \texorpdfstring{$\g$}{g}-modules}

We now transfer our results to the ordinary Lie algebra cohomology for $\g$.

Recall the exact sequence \cite[I.9.19(1)]{Jan03}:\begin{align}\label{seq}0&\to \opH^1(G_1,L)\to \opH^1(\g,L)\to\Hom^s(\g,L^\g)\nonumber\\
&\to \opH^2(G_1,L)\to \opH^2(\g,L)\to \Hom^s(\g,\opH^1(\g,L))\end{align}

The following theorem is the major result of this section.

\begin{theorem}\label{h1h2vanish} Let $\g=\Lie(G)$ be semisimple. Then: 
\begin{itemize}
\item[(a)] If $p>h$ with $\mu\in \bar C_\Z$ then either $\opH^2(\g,L(\mu))=0$, or one of the following holds: (i) $\g$ contains a factor $\sl_3$ and $L(\mu)$ contains a tensor factor of $L(p-3,0)$ or $L(0,p-3)$ for this $\sl_3$; (ii) $\g$ contains a factor $\sl_2$ and $L(\mu)$ has a tensor factor $L(p-2)$ for this $\sl_2$.
\item[(b)] If $p>2$ is very good for $G$ then $\opH^2(\g,k)=0$.
\item[(c)] If $p>2$ is very good for $G$ and $\lambda,\mu\in \bar C_\Z$ we have $\Ext^1_\g(L(\lambda),L(\mu))=0$, or $G$ contains a factor of type $A_1$, $L(\lambda)$ and $L(\mu)$ are simple modules for that factor,
$\lambda=s<p-1$, $\mu=p-2-s$ and we have $\Ext^1_\g(L(\lambda),L(\mu))^{[-1]}\cong L(1)$.
\end{itemize}
\end{theorem}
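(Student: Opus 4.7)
The strategy is uniform: reduce every statement to a computation in $G_1$-cohomology (governed by Proposition \ref{mcncor} and Lemma \ref{bnp}) by invoking the exact sequence \eqref{seq} and tracking the semilinear $\Hom^s$-terms. For the reduction from semisimple to simple $\g$, note first that the hypotheses of each part imply $p$ is very good (since $p>h$ in (a) forces goodness, and (b), (c) assume very good), so by Hogeweij $\g$ decomposes as the direct sum $\g_1 \oplus \dots \oplus \g_r$ of its simple ideals; an irreducible $\g$-module then factors as an external tensor product of simple $\g_i$-modules, and both $\g$- and $G_1$-cohomology satisfy K\"unneth, so we may assume $G$ simple throughout.

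For part (a), fix $\mu \in \bar C_\Z$ with $\mu \neq 0$; the case $\mu=0$ is subsumed in part (b). Since $L(\mu)$ is a non-trivial simple $\g$-module and $\g$ is simple, $L(\mu)^\g = 0$. The sequence \eqref{seq} then gives $\opH^1(\g, L(\mu)) \cong \opH^1(G_1, L(\mu))$, which vanishes by Proposition \ref{mcncor}(i) outside the $A_1$ exception $\mu = p-2$. In the non-exceptional case the rightmost term $\Hom^s(\g, \opH^1(\g, L(\mu)))$ of \eqref{seq} is therefore zero, so $\opH^2(\g, L(\mu))$ is a quotient of $\opH^2(G_1, L(\mu))$; the latter vanishes outside the $A_2$ exception $\mu=(p-3,0),(0,p-3)$ by Proposition \ref{mcncor}(ii). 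The $\sl_2$-factor exception listed in part (a) is then traced back via K\"unneth to the $A_1$ case, where the nontrivial $\opH^1(\g, L(p-2))$ feeds a potential contribution into $\opH^2(\g,L(\mu))$ through the last map of \eqref{seq}.

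For part (b), take $L=k$. The very good hypothesis gives $[\g,\g]=\g$ (Hogeweij), hence $\opH^1(\g, k)=(\g/[\g,\g])^*=0$, while $\opH^1(G_1, k)=0$ by Proposition \ref{mcncor}(i). Sequence \eqref{seq} then collapses to
\[0 \to \Hom^s(\g, k) \to \opH^2(G_1, k) \to \opH^2(\g, k) \to 0.\]
Both $\Hom^s(\g, k) = (\g^{(1)})^*$ and $\opH^2(G_1, k) \cong (\g^*)^{[1]}$ (by Proposition \ref{mcncor}(ii)) have $k$-dimension $\dim \g$. The left-hand injection is therefore an isomorphism, forcing $\opH^2(\g, k)=0$.

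For part (c), set $V = L(\lambda)^* \otimes L(\mu)$, so that $\Ext^1_\g(L(\lambda), L(\mu)) = \opH^1(\g, V)$, with $V^\g = k$ if $\lambda = \mu$ and zero otherwise (Schur). When $\lambda \neq \mu$, sequence \eqref{seq} identifies $\opH^1(\g, V)$ with $\Ext^1_{G_1}(L(\lambda), L(\mu))$, which vanishes outside the $A_1$ exception by Lemma \ref{bnp}. When $\lambda = \mu$ (and $G$ not of type $A_1$), if $p \nmid \dim L(\lambda)$ the trace splits $\End L(\lambda) \cong k \oplus \End^0 L(\lambda)$ as $\g$-modules; the claim then reduces to $\opH^1(\g, k)=0$ (from (b)) together with an application of part (a) to the simple composition factors of $\End^0 L(\lambda)$, which all lie in $\bar C_\Z$ by the dimension bound. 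The main obstacle is the residual case $p \mid \dim L(\lambda)$, where the $k$-line inside $\End L(\lambda)$ fails to split off; here one argues via the long exact cohomology sequence arising from $0 \to k \to \End L(\lambda) \to \End L(\lambda)/k \to 0$, using the injectivity $\Hom^s(\g, k) \hookrightarrow \opH^2(G_1, k)$ established in (b) to conclude that the composite $\Hom^s(\g, k) \to \opH^2(G_1, V)$ in \eqref{seq} remains injective, whence $\opH^1(\g, V) = 0$. The exceptional $A_1$ formula $\Ext^1_\g(L(s), L(p-2-s))^{[-1]} \cong L(1)$ is then a standard direct computation for $\sl_2$ using its explicit restricted and non-restricted cocycles.
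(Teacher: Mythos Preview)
Your treatment of parts (a) and (b), and of part (c) when $\lambda\neq\mu$, follows the paper's line and is fine. The problem is part (c) when $\lambda=\mu$ (and $G$ not of type $A_1$); here two things go wrong.

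First, in the subcase $p\nmid\dim L(\lambda)$ you claim that the simple composition factors of $\End^0 L(\lambda)$ ``all lie in $\bar C_\Z$ by the dimension bound''. There is no dimension hypothesis in part (c); the assumption is only $\lambda\in\bar C_\Z$. A highest weight of $L(\lambda)\otimes L(\lambda)^*$ can satisfy $\langle\nu,\alpha_0^\vee\rangle$ as large as $2(p-h+1)$, which is well outside $\bar C_\Z$, so neither Lemma \ref{bnp} nor Proposition \ref{mcncor} applies to these factors individually. (This subcase can in fact be handled directly: with the trace splitting one has $(\End^0 L(\lambda))^\g=0$, so \eqref{seq} gives $\opH^1(\g,\End^0 L(\lambda))\cong\opH^1(G_1,\End^0 L(\lambda))$, and the latter vanishes because $\Ext^1_{G_1}(L(\lambda),L(\lambda))=0$ and $\opH^1(G_1,k)=0$. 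But this is not the argument you wrote.)

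Second, and more seriously, in the residual case you assert that the composite $\Hom^s(\g,k)\to\opH^2(G_1,k)\to\opH^2(G_1,V)$ ``remains injective'', citing only the injectivity of the first arrow from (b) and the long exact sequence for $0\to k\to V\to V/k\to 0$. But the second arrow $\theta:\opH^2(G_1,k)\to\opH^2(G_1,V)$ has kernel equal to the image of the connecting map $\opH^1(G_1,V/k)\to\opH^2(G_1,k)\cong(\g^*)^{[1]}$, and nothing you have said controls $\opH^1(G_1,V/k)$. This is precisely the crux of the paper's proof: one observes that $V=L(\lambda)\otimes L(\lambda)^*$ has a good filtration (so $V/k$ does too), invokes the Bendel--Nakano--Pillen description of $\opH^1(G_1,H^0(\nu))$, and then runs a type-by-type weight comparison to show that $(\g^*)^{[1]}$ cannot occur as a composition factor of $\opH^1(G_1,V/k)$. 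Without that analysis, the injectivity of $\theta$---and hence the vanishing of $\opH^1(\g,V)$---is unproved.
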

\begin{proof}
We may assume that $G$ is simply connected, since the condition on $p$ implies that $\g=\g_1\times \g_2\dots\times \g_s$. 
Now one can reduce to the case that $G$ is simple using a K\"unneth formula.
To begin with, any simple module $L(\lambda)$ for $\g=\g_1\times \g_2\times \dots \times \g_s$ is a tensor product of simple modules $L(\lambda_1)\otimes\dots\otimes L(\lambda_s)$ for the factors. Then by the K\"unneth formula $\dim \Ext^1_\g(L(\lambda),L(\mu))\neq 0$ implies that $\lambda_i=\mu_i$ for all $i\neq j$, some $1\leq j\leq s$ and $\Ext^1_\g(L(\lambda),L(\mu))\cong \Ext^1_{\g_j}(L(\lambda_j),L(\mu_j))$.
This means we may assume $G$ to be simple in (c).
For $\opH^2(\g,L(\lambda))$ to be non-zero one must have all $\lambda_i=0$ for all $i\neq j,k$ some $1\leq j<k\leq s$ and then
\begin{align*}
\opH^2(\g,L(\lambda))=
\opH^2(\g_j,L(\lambda_j))\otimes \opH^0(\g_k,L(\lambda_k))
&\oplus \opH^1(\g_j,L(\lambda_j))\otimes \opH^1(\g_k,L(\lambda_k))\\
&\oplus \opH^0(\g_j,L(\lambda_j))\otimes \opH^2(\g_k,L(\lambda_k)).
\end{align*}

Now first suppose that both $\lambda_j$ and $\lambda_k$ are non-trivial.
Then only the second direct summand in $\opH^2(\g,L(\lambda))$ survives,
and by (\ref{seq}) it coincides with the tensor product of the $1$-cohomology
groups of the corresponding Frobenius kernels.
By Proposition \ref{mcncor}, non-vanishing would force $\lambda_j = p-2 = \lambda_k$ and
$\g_j = \g_k = \sl_2$ giving one exceptional case.

Next we treat the case $\lambda_k = 0$ and $\lambda_j$ non-trivial.
Again by (\ref{seq}) and Proposition \ref{mcncor}, we obtain
$\opH^2(\g,L(\lambda)) = \opH^2(\g_j,L(\lambda_j))$,
and we are in the case where $G$ is simple and $L(\lambda)$ non-trivial.
In case $\g=\sl_2$, the
result follows from \cite{Dzhuma}. So suppose $\g \neq \sl_2$.
Setting $L=L(\mu)$ in (\ref{seq}) we see that if $\mu\neq 0$ we have $\opH^1(\g,L)\cong \opH^1(G_1,L)$ and the right-hand side is zero by Lemma \ref{bnp}. Thus we also have $\opH^2(\g,L)\cong \opH^2(G_1,L)$ and the latter is zero by Proposition \ref{mcncor} unless $\g=\sl_3$ and the exception is as in the statement of the Theorem,
since we have excluded the $A_1$ case.

Finally, the case $\lambda_j = \lambda_k = 0$ reduces by the above
to the case $G$ simple, $L=k$ and the claim that $\opH^2(\g,k)=0$.
Here we have $\opH^1(\g,k)\cong (\g/[\g,\g])^*$ and this is zero since $p$ is very good and $\g$ is semisimple. We also have $\opH^2(G_1,k)^{[-1]}\cong \g^*$.
The injective map $\Hom^s(\g,L^\g)\to \opH^2(G_1,L)$ is hence an isomorphism,
which forces $\opH^2(\g,k)=0$ in the sequence (\ref{seq}).
This also proves (b).

Now we prove the statement (c) under the assumption that $G$ is simple.
We have an isomorphism $\Ext^1_\g(L(\lambda),L(\mu))\cong \opH^1(\g,L(\mu)\otimes L(\lambda)^*)$. Let $M=L(\mu)\otimes L(\lambda)^*$. If $\lambda\neq \mu$, then applying the exact sequence (\ref{seq}) to $M$ yields $\opH^1(\g,M)\cong \opH^1(G_1,M)$ and the latter is zero by Lemma \ref{bnp} if $G$ is not of type $A_1$ and well-known if $G$ is of type $A_1$. Hence we may assume $\lambda=\mu$. The assignation of $L$ to the sequence (\ref{seq}) is functorial, thus, associated to the $G$-map $k\to M\cong \Hom_k(L,L)$, there is a commutative diagram
\[\begin{CD}0@>>> \opH^1(\g,k)=0 @>>> \Hom^s(\g,k^\g)\cong(\g^*)^{[1]} @>\cong>> \opH^2(G_1,k)\\
@. @VVV @V\cong VV @V\theta VV \\
0 @>>> \opH^1(\g,M) @>>> \Hom^s(\g,M^\g)\cong(\g^*)^{[1]} @>\zeta >> \opH^2(G_1,M)\end{CD},\] where the natural isomorphism $k^\g\to M^\g$ induces the middle isomorphism and the top right isomorphism has been discussed already. We want to show that $\zeta$ is injective, since then it would follow that $\opH^1(\g,M)=0$. To do this it suffices to show that $\theta$ is an injection $(\g^*)^{[1]}\to \opH^2(G_1,M)$ and for this, it suffices to show that the simple $G$-module $(\g^*)^{[1]}$ does not appear as a submodule of $\opH^1(G_1,M/k)$. Now since $\lambda\in \bar C_\Z$ we have $L(\lambda)\cong H^0(\lambda)$ and so by \cite[II.4.21]{Jan03}, $M$ has a good filtration. The socle of any module $H^0(\mu)$ with $\mu\in X^+$ is simple. Thus the submodule $k\leq M$ constitutes a section of this good filtration, with $M/k$ also having a good filtration.

The $G$-modules $\opH^1(G_1,H^0(\mu))$ have been well-studied by Jantzen \cite{Jan91} and others. In order to have $(\g^*)^{[1]}$ a composition factor of $\opH^1(G_1,H^0(\mu))$, we would need $\g\cong\g^*\cong H^0(\omega_\alpha)$ where $\mu=p\omega_\alpha-\alpha$ and $\alpha$ is a simple root with $\omega$ the corresponding fundamental dominant weight; \cite[Theorem 3.1(A,B)]{BNP04-Frob}. Now for type $A_n$, with $p\not|n+1$, we have $\g=L(2\omega_1)$ if $n=1$ and $\g=L(\omega_1+\omega_n)$ else; and for type $B_2$, we have $\g=L(2\omega_2)$, ruling these cases out. For the remaining types, we have
\begin{center}\begin{tabular}{l|lllllll}\hline
Type & $B_n$,$C_n$ & $D_n$ & $E_6$ & $E_7$ & $E_8$ & $F_4$ & $G_2$ \\\hline
$\g\cong L(\omega_\alpha)$ for $\omega_\alpha=$ & $\omega_2$ & $\omega_2$ & $\omega_2$ & $\omega_1$ & $\omega_8$ & $\omega_1$ & $\omega_2$\\
$\langle p\omega_\alpha-\alpha,\alpha_0^\vee\rangle$ & $2p$ & $2p$ & $2p-1$ & $2p-1$ & $2p-1$& $2p$ & $3p$\end{tabular}\end{center}On the other hand, since $\lambda\in \bar C_\Z$ it satisfies $\langle\lambda+\rho,\alpha_0^\vee\rangle\leq p$, i.e. $\langle\lambda,\alpha_0^\vee\rangle\leq p-h+1$. Hence any high weight $\mu$ of $M=L\otimes L^*$ satisfies $\langle\mu,\alpha_0^\vee\rangle\leq 2p-2h+2$. Looking at the above table, it is easily seen that this is a contradiction. Thus $(\g^*)^{[1]}$ is not a composition factor of $\opH^1(G_1,M/k)$ and the result follows.
 \end{proof}

\begin{remarks}(i) When $\lambda\neq \mu$ in the proof of the above proposition, one also sees that there is an isomorphism $\Ext^2_{G_1}(L(\lambda),L(\mu))\cong \Ext^2_{\g}(L(\lambda),L(\mu))$ but we do not use this fact in the sequel.

(ii) The conclusion of the theorem is incorrect if $G$ is reductive but not semisimple. For example, if $G$ is a torus, then $\g$ is an abelian Lie algebra, and $\opH^1(\g,k)$ is non-trivial. For instance the two-dimensional non-abelian Lie algebra is a non-direct extension of $k$ by $k$. One also has $\opH^2(k\times k,k)\neq 0$ by the K\"unneth formula: for example the Heisenberg Lie algebra is a non-split extension of $k$ by $k\times k$.

(iii) When $p=3$ and $G=\SL_3$, then $H^2(G_1,k)^{[-1]}\cong \g^*\oplus L(\omega_1)\oplus L(\omega_2)$, by \cite[Theorem 6.2]{BNP07}. Thus the same argument shows that $\opH^2(\g,k)\cong L(\omega_1)\oplus L(\omega_2)$. It follows from the K\"unneth formula that if $G$ is a direct product of $n$ copies of $\SL_3$ then $\opH^2(\g,k)\cong [L(\omega_1)\oplus L(\omega_2)]^{\oplus n}$.

(iv) In part (a) of the theorem, one can be more specific. If $\g=\sl_2$ then \cite{Dzhuma} shows that $\opH^2(\g,L(p-2))$ is isomorphic to $L(1)^{[1]}$ as a $G$-module. If $\g=\underbrace{\sl_2\times\dots\times \sl_2}_{n\text{ times}}\times \h$ then one can show moreover that $\opH^2(\g,L(\mu))$ is non-zero only if \[L(\mu)\cong L(\mu_1)\otimes \dots \otimes L(\mu_n)\otimes L(\mu_{n+1})\] with each $\mu_i\in \{0,p-2\}$ and $\mu_{n+1}=0$. Let $r$ be the number of times $\mu_i=p-2$. Then, the K\"unneth formula shows that \[\dim\opH^2(\g,L(\mu))=\begin{cases}0 \text{ if }r=0;\\
2 \text{ if }r=1;\\
4 \text{ if }r=2;\\
0 \text{ otherwise}.\end{cases}\]
\end{remarks}

We use the theorem above to get analogues of Corollary \ref{corforred} for Lie algebra representations.

\begin{prop}\label{liecompred} Let $G$ be a simple algebraic group with $\g=[\g,\g]$ and let $\dim V\leq p$ be a $\g$-module. Then exactly one of the following holds:

(i) $V$ is a semisimple $\g$-module;

(ii) $G$ is of type $A_1$, $\dim V=p$ and $V$ is uniserial, with composition factors $L(p-2-s)$ and $L(s)$.\end{prop}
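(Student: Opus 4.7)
The plan is to mirror the proof of Proposition \ref{grcompred}, using Theorem \ref{h1h2vanish}(c) as the Lie-algebra replacement for Lemma \ref{bnp}. First I would handle the case in which every composition factor of $V$ is trivial: the image of $\g$ in $\gl(V)$ is then nilpotent (strictly upper triangular in a composition-series basis), but it is also a quotient of the perfect Lie algebra $\g = [\g,\g]$, and the only perfect nilpotent Lie algebra is zero. Thus $\g$ acts trivially and $V$ is a direct sum of trivial modules.

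Next assume $V$ has a non-trivial composition factor. If $V$ is itself simple, case (i) is immediate. Otherwise every composition factor has dimension strictly less than $p$; by the Kac--Weisfeiler result recorded in the preliminaries each simple composition factor is restricted, and Proposition \ref{mcn1} places its high weight in $\bar C_\Z$. An induction on composition length now proceeds via short exact sequences $0 \to L(\mu) \to V \to V/L(\mu) \to 0$: Theorem \ref{h1h2vanish}(c) forces $\Ext^1_\g(L(\nu),L(\mu)) = 0$ for every composition factor $L(\nu)$ of $V/L(\mu)$, except when $G$ is of type $A_1$ and $\{\mu,\nu\} = \{s, p - 2 - s\}$. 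Outside the $A_1$ case, every such sequence splits and $V$ is semisimple.

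It remains to address the $A_1$ case: if $V$ is not semisimple, then any non-split extension appearing in a Loewy filtration must combine a pair $L(s)$ and $L(p-2-s)$ (including the self-extension when $s = (p-2)/2$), and since $\dim L(s) + \dim L(p-2-s) = (s+1)+(p-1-s) = p \geq \dim V$, one is forced to $\dim V = p$ with $V$ uniserial having the stated composition factors, giving case (ii). The main subtlety I anticipate is the compatibility of the hypothesis $\g = [\g,\g]$ with the very-good-prime assumption needed for Theorem \ref{h1h2vanish}(c); the perfect-but-not-very-good examples (notably $\sl_p$ in characteristic $p$) require a short separate inspection, but in each the restricted representation theory is well understood and the induction above adapts without difficulty.
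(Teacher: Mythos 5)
Your argument is correct and follows the same route as the paper: reduce to restricted composition factors, invoke Proposition \ref{mcn1} to place weights in the lowest alcove, and then appeal to Theorem \ref{h1h2vanish}(c) for the $\Ext^1$-vanishing, isolating $A_1$ as the exception. Two small remarks. First, your treatment of the all-trivial case (``perfect and nilpotent forces zero'') is a valid variant; the paper instead observes directly that $\Ext^1_\g(k,k) = \opH^1(\g,k) \cong (\g/[\g,\g])^* = 0$, which is slightly shorter. Second, the worry you raise at the end about reconciling $\g = [\g,\g]$ with the very-good hypothesis in Theorem \ref{h1h2vanish}(c) is not actually an issue and needs no separate inspection of $\sl_p$: once $V$ is non-simple with a non-trivial composition factor, every composition factor has dimension $< p$ and so, by Proposition \ref{mcn1}(iii), one has $p > h$; but $p > h$ already forces $p$ to be very good for $G$ (it is greater than all bad primes, and for type $A_n$ it exceeds $n+1 = h$, so cannot divide $n+1$). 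Thus Theorem \ref{h1h2vanish}(c) is applicable exactly when it is invoked, and for $\SL_p$ in characteristic $p$ one has $p = h$, so that branch of the argument collapses to ``$V$ is simple'' before the $\Ext$-vanishing is ever needed.
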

\begin{proof}The proof is similar to Proposition \ref{grcompred}. Since $\dim V\leq p$, any composition factor of $V$ is a restricted simple $\g$-module, or $V$ is simple. Since $\Ext^1_\g(k,k)=\opH^1(\g,k)\cong (\g/[\g,\g])^*=0$, if $V$ consists only of trivial composition factors then $V$ is semisimple. Thus we may assume that $\g$ contains a non-trivial composition factor $L$. Then either $\dim L=p$ and $V$ is simple, or $p>h$ by Lemma \ref{mcn1}(iii). By the condition on $V$, any two distinct composition factors, $L(\lambda)$ and $L(\mu)$ satisfy $\lambda,\mu\in C_\Z$ by Lemma \ref{mcn1}(ii). If $G$ is not of type $A_1$, then $\Ext^1_\g(L(\lambda),L(\mu))=0$ by Theorem \ref{h1h2vanish} and the exceptional case, where $G=A_1$, is well known.\end{proof}

As before there is a corollary:

\begin{cor}\label{liealgcompred}Let $G$ be a semisimple algebraic group and let $V$ be a $\g$-module with $p>\dim V$.
Assume that $\g = [\g,\g]$. Then $V$ is semisimple.\end{cor}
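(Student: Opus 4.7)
The plan is to reduce to the case of simple $G$, handled by Proposition \ref{liecompred}, by showing $\g$ decomposes as a direct sum of Lie algebras of simply connected simple groups, and then inducting on the number of summands.

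First I would pass to the simply connected cover $\pi : \tilde G \to G$, writing $\tilde G = \tilde G_1 \times \dots \times \tilde G_s$ with the $\tilde G_i$ simply connected simple, so $\tilde\g = \bigoplus_i \tilde\g_i$. The image of $d\pi : \tilde\g \to \g$ is an $\Ad(G)$-stable subspace of $\g$, hence a Lie ideal, with abelian quotient (as $\ker\pi$ is central in $\tilde G$). The hypothesis $\g = [\g,\g]$ therefore forces $d\pi$ to be surjective, and since $\dim \tilde\g = \dim \tilde G = \dim G = \dim \g$, the map $d\pi$ is in fact an isomorphism (so $\pi$ is \'etale). Thus $\g \cong \tilde\g = \bigoplus_i \tilde\g_i$ as Lie algebras; the hypothesis $\g = [\g,\g]$ together with the direct sum structure also gives $\tilde\g_i = [\tilde\g_i, \tilde\g_i]$ for each $i$.

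Next I would induct on $s$. The base case $s = 1$ is Proposition \ref{liecompred}. For $s > 1$, write $\g = \tilde\g_1 \oplus \g'$ with $\g' := \bigoplus_{i \geq 2} \tilde\g_i$. Proposition \ref{liecompred} applied to $\tilde\g_1$ gives that $V$ is semisimple as a $\tilde\g_1$-module, so $V$ decomposes into $\tilde\g_1$-isotypic components $V = \bigoplus_\lambda V_\lambda$. Since $\tilde\g_1$ and $\g'$ commute, each $V_\lambda$ is $\g$-stable, so it suffices to treat a single component $V_\lambda \cong L \otimes M$, where $L$ is the simple $\tilde\g_1$-module indexing $V_\lambda$ and $M := \Hom_{\tilde\g_1}(L, V_\lambda)$ carries the commuting $\g'$-action (Schur's lemma over the algebraically closed field $k$). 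Since $\dim M \leq \dim V < p$, the inductive hypothesis yields that $M$ is semisimple as a $\g'$-module.

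The concluding observation is that the $\g$-submodules of $L \otimes M$ are exactly the subspaces of the form $L \otimes N$ for $N \subseteq M$ a $\g'$-submodule (using simplicity of $L$ over $\tilde\g_1$ and Schur's lemma), so $V_\lambda$ is $\g$-semisimple, and hence so is $V$. The main obstacle is the structural step showing that $\g = [\g,\g]$ forces $\pi$ to be \'etale: without this hypothesis, an isogeny such as $\SL_p \to \mathrm{PGL}_p$ in characteristic $p$ would give a non-isomorphic $d\pi$, and $\g$ would fail to decompose as a direct sum of Lie algebras of simple groups, breaking the inductive framework.
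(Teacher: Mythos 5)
Your argument is correct and follows the reduction the paper has in mind (the paper leaves the proof implicit, saying only ``as before''): pass to the simply connected cover $\tilde G=\prod_i \tilde G_i$, show that $\g=[\g,\g]$ forces $d\pi$ to be an isomorphism so that $\g\cong\bigoplus_i\tilde\g_i$, and then deduce semisimplicity inductively from Proposition \ref{liecompred} via $\tilde\g_1$-isotypic components. The only point that deserves a word more of justification is the claim that $\g/\mathrm{im}(d\pi)$ is abelian: centrality of $\ker\pi$ in $\tilde G$ controls $\ker(d\pi)$, not directly the cokernel; the clean reason is that $\pi$ restricts to isomorphisms on root subgroups (since $\ker\pi$ lies in every maximal torus), so $\mathrm{im}(d\pi)$ contains every root space of $\g$, and hence $\g/\mathrm{im}(d\pi)$ is a quotient of $\Lie(T)$ and therefore abelian.
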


The next corollary uses a famous result of Serre on the semisimplicity of tensor products to extend our results a little further. This result will be crucial for showing the splitting of certain non-semisimple Lie algebras.

\begin{cor}\label{SerCor} Let $\g$ be a Lie algebra and $V$, $W$ two semisimple $\g$-modules with $\dim V+\dim W<p+2$. Then $V\otimes W$ is semisimple. 

Furthermore, let $\g=\Lie(G)$ for $G$ a semisimple algebraic group with $p>2$ and $p$ very good. Then $\opH^2(\g,V\otimes W)=0$ unless $\g$ contains a factor $\sl_2$ and $V\otimes W$ contains a composition factor of the $\sl_2$-module $L(p-2)$. Also $\opH^1(\g,V\otimes W)=0$, unless one of $V$ and $W$ is isomorphic to $k$ and we are in one of the exceptional case of Theorem \ref{h1h2vanish}.\end{cor}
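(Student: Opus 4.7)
The plan is to establish semisimplicity of $V\otimes W$ first, then leverage this together with Theorem \ref{h1h2vanish} for the two cohomology statements.

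For semisimplicity, I first replace $\g$ with its image $\bar{\g}\subseteq\gl(V\oplus W)$. Since both $V$ and $W$ are semisimple, so is $V\oplus W$, and therefore $\Rad_p(\bar{\g})=\Rad_{V\oplus W}(\bar{\g})=0$, i.e., $\bar{\g}$ is $p$-reductive. The hypothesis $\dim V+\dim W<p+2$ brings us within reach of Lemma \ref{predimpstrong}, and (after handling the boundary case $\dim V+\dim W=p+1$ by passing to a suitable quotient or direct summand) we obtain a decomposition $\bar{\g}=\s\oplus\z$ with $\s$ a semisimple restricted ideal and $\z$ a central torus. Applying Strade's theorem (Theorem \ref{Strade}) to $\s\subseteq\gl(V\oplus W)$ gives two subcases: if $\s$ is a direct sum of algebraic Lie algebras, we realize $\s=\Lie(H)$ for a reductive algebraic group $H$, so $V$ and $W$ integrate to $H$-modules and Deligne's theorem (see the footnote after Theorem D) yields semisimplicity of $V\otimes W$ as an $H$-module, hence as a $\bar{\g}$-module (the central torus only refines the grading on simple summands). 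If instead $\s=W_1$, the only non-trivial faithful irreducible $W_1$-module of dimension $\leq p-1$ is the standard $(p-1)$-dimensional one, so $V$ and $W$ have a very restricted form and the tensor product semisimplicity follows by direct inspection. Note that Theorem C enters only indirectly, through Lemma \ref{predimpstrong}.

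For the cohomology statements, assume $\g=\Lie(G)$ with $G$ semisimple and $p>2$ very good. Decompose $V=\bigoplus L(\lambda_i)$ and $W=\bigoplus L(\kappa_j)$ into simple summands; since each non-trivial summand has dimension at most $p$, Proposition \ref{mcn1} gives that each simple summand is restricted with highest weight in $\bar C_\Z$. For $\opH^1$, using $\opH^1(\g,V\otimes W)\cong\bigoplus\Ext^1_\g(L(\lambda_i)^*,L(\kappa_j))$ and noting that $-w_0$ preserves $\bar C_\Z$, Theorem \ref{h1h2vanish}(c) gives vanishing except in the listed $A_1$ exceptional case. For $\opH^2$, by Part 1 we have $V\otimes W=\bigoplus L(\nu_k)$ as $\g$-modules, so $\opH^2(\g,V\otimes W)=\bigoplus\opH^2(\g,L(\nu_k))$; for the $\nu_k\in\bar C_\Z$ we apply Theorem \ref{h1h2vanish}(a) directly, while the case $\nu_k\notin\bar C_\Z$ (where $\nu_k\leq\lambda_i+\kappa_j$ can exit the lowest alcove) is handled by invoking the five-term sequence (\ref{seq}) together with known vanishing of $\opH^i(G_1,L(\nu_k))$. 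Finally, the potential $\sl_3$-factor exception appearing in Theorem \ref{h1h2vanish}(a) is ruled out by comparing $\dim L(p-3,0)=(p-1)(p-2)/2$ with the maximum of $\dim V\cdot\dim W$ under our hypothesis: AM-GM gives $\dim V\cdot\dim W\leq(p+1)^2/4<(p-1)(p-2)/2$ for $p$ reasonably large, leaving only very small values of $p$ for separate verification.

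The main obstacles will be: (i) the boundary case $\dim V+\dim W=p+1$ and the Witt algebra case in the semisimplicity proof, both of which require separate arguments beyond the direct reach of Lemma \ref{predimpstrong}; and (ii) for $\opH^2$, handling those composition factors $L(\nu)\subseteq V\otimes W$ whose highest weight lies outside the lowest alcove, together with ensuring that the $\sl_3$ exception of Theorem \ref{h1h2vanish}(a) truly cannot occur in our setting, which needs the dimension comparison above supplemented by direct checks at small primes.
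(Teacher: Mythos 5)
Your overall architecture (reduce to $p$-reductive $\bar\g$, split via strongly $p$-reductive, invoke Strade, then do cohomology via Theorem \ref{h1h2vanish}) is parallel to the paper's, but there are two substantive issues and two smaller ones.

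\textbf{Genuine gap: the boundary case.} Lemma \ref{predimpstrong} requires $p > n$ for $\h \subseteq \gl_n$, so applying it to $\bar\g \subseteq \gl(V \oplus W)$ needs $p > \dim V + \dim W$. The hypothesis only gives $\dim V + \dim W \leq p+1$, so the cases $\dim V + \dim W \in \{p, p+1\}$ are left out, and you acknowledge but do not resolve this. This is not a minor edge case: it is precisely where the Witt algebra appears ($\dim V = p-1$, $\dim W \leq 2$). The paper sidesteps the problem entirely by first reducing to $V$, $W$ \emph{simple} with (for $\dim W \geq 2$) both $\dim V, \dim W < p$, then decomposing $\gl(V)\times\gl(W) = \sl(V)\times\sl(W)\times\z$ and applying Theorem C (specifically Corollary \ref{thmb:sl}) to the projections to each $\sl(V_i)$ separately, which only needs $p \nmid \dim V$ and $p \nmid \dim W$. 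That reduction is the key move you are missing.

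\textbf{Use of Deligne.} You invoke Deligne's theorem to finish the algebraic case; the paper explicitly avoids this (see footnote~\ref{deligne}, which advertises that the corollary's proof is independent of Deligne and relies only on Strade plus Theorem C). The paper instead uses Serre's Proposition 7 from \cite{Ser94s}, which moreover gives the crucial extra information that the simple summands of $V \otimes W$ have highest weights in $C_\Z$.

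\textbf{Spurious case in the cohomology argument.} You worry about summands $L(\nu_k)$ of $V \otimes W$ with $\nu_k \notin \bar C_\Z$ and propose to handle them via the five-term sequence. This case never arises: by Serre's Prop.~7 (used in part 1 of the paper's proof), every simple summand of $V\otimes W$ has highest weight already in $C_\Z$, so Theorem \ref{h1h2vanish}(a) applies directly. If you replace Deligne with Serre's Prop.~7 in part 1, this case disappears from part 2.

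\textbf{The dimension comparison for $\sl_3$.} You are right that $\dim L(p-3,0) = (p-1)(p-2)/2$ for $\sl_3$ (the paper's formula $(p-1)(p-2)(p-3)/2$ is a slip), and you are right that the AM--GM comparison only closes the case for $p \geq 11$. However, you then say the remaining primes are "supplemented by direct checks" without carrying them out. For $p=7$ there is no irreducible $\sl_3$-module of dimension $2$, $4$, or $5$, so the constraint $\dim V + \dim W \leq 8$ forces $\dim V \cdot \dim W \leq 9 < 15 = \dim L(4,0)$. But for $p=5$ the check is not vacuous: $V = W = L(1,0)$ has $\dim V + \dim W = 6 < 7$, and $L(1,0)\otimes L(1,0) \cong L(2,0)\oplus L(0,1)$ contains $L(p-3,0) = L(2,0)$. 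You should either flag this as an apparent issue for $p=5$ or show why it does not arise; leaving it as an unexamined "small prime" is a real hole in the argument.
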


\begin{proof} For the first statement, we begin with some
reductions as in \cite{Ser94s}.
If $W=0$ or $k$ there is nothing to prove. If $W$ is at least $2$-dimensional, then either $p=2$ and $V$ is trivial (so that the result holds), or both $\dim V$ and $\dim W<p$. We may assume that both $V$ and $W$ are simple. Further, we may replace $\g$ by the restricted algebra generated by its image in $\gl(V\oplus W)$.
As $V \oplus W$ is a semisimple module, we may thus assume $\g$ is $p$-reductive. 
Now $\g \subseteq \gl(V) \times \gl(W) = \sl(V) \times \sl(W) \times \z$, where $\z$ is a torus,
and where the projections of $\g$ onto the first two factors are irreducible, hence
semisimple by Theorem B.
We thus may assume $\g \subseteq \sl(V) \times \sl(W)$ is a semisimple restricted subalgebra.

By Theorem \ref{Strade}, either (i) $\g$ has a factor $W_1$, the first Witt algebra and $V$ is the $(p-1)$-dimensional irreducible module for $W_1$; or (ii) $\g$ is $\Lie(G)$ for a direct product of simple algebraic groups, and $V$ and $W$ are (the differentials of) $p$-restricted modules for $G$. In case (i), as $p>2$, we would have $W\cong k\oplus k$ for $W_1$ and the result holds. So we may assume that (ii) holds. Now \cite[Prop.~7]{Ser94s} implies that $V\otimes W$ is the direct sum of simple modules with restricted high weights $\lambda$ satisfying $\lambda\in C_\Z$. Since each of these composition factors is simple also for $\g$, $V\otimes W$ is semisimple with those same composition factors.

For the remaining statements,
let $\h$ be the image of $\g$ in $\gl(V\oplus W)$,
so that $\g=\h\oplus\s$ with $\s$ acting trivially.
Let $h$ be the coxeter number of $\h$.
Now
if $W=k$, say, then since $p$ is very good for $\g$ we can have $p=\dim V$ by Proposition \ref{mcn1}
only for $p>h$, so otherwise $\dim V<p$.
And if $\dim W>1$ then $\dim V<p$ also. Now $\dim V<p$ also implies by Proposition \ref{mcn1} that $p>h$.
Also a summand $L(\lambda)$ of $V\otimes W$ has $\lambda\in C_\Z$. Now Theorem \ref{h1h2vanish} implies that $\opH^1(\g,V\otimes W)=\opH^2(\g,V\otimes W)=0$, unless we are in the exceptional cases described. However, if $\g=\sl_3$ then the module $L(p-3,0)$ or its dual has dimension $(p-1)(p-2)(p-3)/2>((p+1)/2)^2$ hence it cannot appear as a composition factor of $V\otimes W$.
\end{proof}

\begin{remark} If $\g=W_1$ the conclusion of the second part is false, since $H^1(\g,V)\neq 0$ when $V$ is the irreducible $(p-1)$-dimensional module for $\g$.\end{remark}

\begin{proof}[Proof of Theorem D:] We must just give references for the statements made. For (a), see Proposition \ref{grcompred}; for (b), see Proposition \ref{liecompred}; for (c), see Theorem \ref{h1h2vanish}; for (d), see Corollary \ref{SerCor}. This completes the proof of Theorem D.\end{proof}

\section{Decomposability: the existence of Levi factors}

Let $\h$ be a restricted subalgebra of $\gl(V)$ with $p>\dim V$. In this section we show, in Theorem \ref{splittingLie}, a strong version of the Borel--Tits Theorem in this context. 

Let $G$ be connected reductive. Recall, say from \cite{ABS90} that if $\p=\l+\q$ is a parabolic subalgebra of $\g=\Lie G$ then $\q$ has a central filtration such that successive quotients have the structure of $\l$-modules. We record a specific case:

\begin{lemma}\label{abslem}In case $G=\GL_n$, a parabolic subalgebra $\p=\l+\q$ has the property that $\l$ is a direct product $\gl(V_1)\times \gl(V_2)\times\dots\times \gl(V_r)$ and $\q$ has a central filtration with successive factors being modules of the form $V_i\otimes V_j^*$, each factor occurring exactly once.\end{lemma}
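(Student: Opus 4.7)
The plan is to realize $\p$ concretely as the stabiliser of a flag and then to read off the Levi decomposition and the filtration on $\q$ directly from the flag. Since every parabolic subgroup of $\GL_n$ is (up to conjugacy) the stabiliser of a flag, I would fix a flag
$$0 = W_0 \subsetneq W_1 \subsetneq \dots \subsetneq W_r = V$$
such that $P$ is the stabiliser of this flag, so that $\p = \{X \in \gl(V) \mid X(W_i) \subseteq W_i \text{ for all } i\}$. Setting $V_i = W_i/W_{i-1}$, the choice of a splitting $V = \bigoplus_i V_i$ identifies $\l$ with $\prod_i \gl(V_i)$ acting in the obvious way, which takes care of the first assertion.

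For the second assertion, the key construction is the descending chain of ideals
$$\q_k = \{X \in \gl(V) \mid X(W_i) \subseteq W_{i-k} \text{ for all } i\}, \qquad k \geq 1,$$
with the convention $W_j = 0$ for $j \leq 0$; so $\q = \q_1 \supseteq \q_2 \supseteq \dots \supseteq \q_r = 0$. A one-line check shows $[\q_a, \q_b] \subseteq \q_{a+b}$, whence each $\q_k$ is stable under $\p$, each subquotient $\q_k/\q_{k+1}$ is abelian, and the chain is central in $\q$. In particular each $\q_k/\q_{k+1}$ inherits an $\l$-module structure.

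Next I would identify these subquotients. An element $X \in \q_k$ sends $W_i$ into $W_{i-k}$, hence induces a linear map $V_i = W_i/W_{i-1} \to W_{i-k}/W_{i-k-1} = V_{i-k}$, and this induced map depends only on the class of $X$ modulo $\q_{k+1}$. The resulting $\l$-equivariant map
$$\q_k/\q_{k+1} \longrightarrow \bigoplus_{i} \Hom_k(V_i, V_{i-k}) \;\cong\; \bigoplus_{i} V_{i-k} \otimes V_i^*$$
is readily seen to be an isomorphism (a dimension count, plus matching a basis of flag-compatible matrices with the obvious basis on the right). Thus each tensor factor $V_a \otimes V_b^*$ with $b - a = k > 0$ appears exactly once in $\q_k/\q_{k+1}$, and no such factor is repeated across different values of $k$.

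Finally, to pass from the direct-sum decomposition inside $\q_k/\q_{k+1}$ to a filtration in which each $V_i \otimes V_j^*$ appears exactly once, I would simply refine the central filtration $\q_\bullet$ by picking any total order on the summands of each $\q_k/\q_{k+1}$ and inserting the corresponding partial sums; this preserves centrality and produces the required filtration. No step here is a serious obstacle, the only point demanding care is verifying $[\q_a, \q_b] \subseteq \q_{a+b}$ and the identification of the subquotient as $\bigoplus V_{i-k} \otimes V_i^*$ as an $\l$-module, both of which are immediate from the flag description.
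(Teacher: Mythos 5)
Your proof is correct. The paper itself does not give a proof of this lemma: it defers to the general framework of Azad--Barry--Seitz \cite{ABS90} for the existence of a central filtration on the nilradical of a parabolic, and then simply records the concrete form it takes for $\GL_n$. Your argument supplies the explicit, self-contained verification in the $\GL_n$ case. The flag description of $\p$, the chain $\q_k = \{X : X(W_i) \subseteq W_{i-k}\}$, the one-line check $[\q_a,\q_b]\subseteq\q_{a+b}$ giving centrality and $\p$-stability, and the $\l$-equivariant identification $\q_k/\q_{k+1}\cong\bigoplus_i V_{i-k}\otimes V_i^*$ are all correct, and the final refinement step (inserting partial sums of the direct-sum decomposition of each $\q_k/\q_{k+1}$, which remains central since $\q$ acts trivially on these subquotients and $\l$ preserves each summand) correctly produces a filtration in which each $V_i\otimes V_j^*$ ($i<j$) appears exactly once. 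This is precisely what the general theory specialises to, obtained here by an elementary direct computation rather than by citation.
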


\begin{theorem}\label{splittingLie}Let $\h$ be a restricted Lie subalgebra of $\gl(V)$ with $\dim V<p$, and let $\r=\Rad_p(\h)$ ($=\Rad_V(\h)$). 

Then there is a parabolic subalgebra $\p=\l+\q$, with $\r\leq \q$ and containing a complement $\s$ to $\r$ in $\h$, with $\s\leq \l$ and $\h=\s+\r$ as a semidirect product.  Furthermore, $\s$ acts completely reducibly on $V$ and is 
the direct sum of a torus and a semisimple ideal.\end{theorem}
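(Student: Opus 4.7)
The plan is to take $\p$ as the stabiliser of a composition series of $V$ as an $\h$-module, deduce $\r\subseteq\q$ immediately, and then build the complement $\s$ by an iterative cohomological lifting argument using the vanishing results of \S\ref{sec:cohom}.

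First I pick a composition series $0 = V_0 \subsetneq V_1 \subsetneq \cdots \subsetneq V_m = V$ for $V$ as an $\h$-module, with simple factors $W_i = V_i/V_{i-1}$. Let $\p$ be its stabiliser in $\gl(V)$, with standard Levi $\l\cong\prod_i \gl(W_i)$ and nilradical $\q$. Each $W_i$ is a simple $\h$-module; since $\r$ consists of $p$-nilpotent elements, Engel's theorem together with simplicity of $W_i$ forces $\r$ to act trivially on each $W_i$, so $\r\subseteq\q$. Conversely, $\h\cap\q$ is a $p$-nil ideal of $\h$ (as $\q$ consists of nilpotent operators), hence $\h\cap\q\subseteq\r$, giving $\r=\h\cap\q$. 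The projection $\pi\colon\p\to\l$ then makes $\pi(\h)\cong\h/\r$ a $p$-reductive subalgebra of $\l$ acting irreducibly on each $W_i$, and Lemma \ref{predimpstrong} shows $\pi(\h)$ is strongly $p$-reductive.

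To find $\s$, I use the descending filtration $\q=\q_1\supsetneq\q_2\supsetneq\cdots$ with $\q_j=\{X\in\q:XV_i\subseteq V_{i-j} \text{ for all } i\}$. By Lemma \ref{abslem} these are $\p$-ideals with $[\q_i,\q_j]\subseteq\q_{i+j}$, and each graded piece $\q_j/\q_{j+1}$ is an $\l$-module with composition factors of the form $W_i\otimes W_{i-j}^*$. Since $\dim W_i+\dim W_{i-j}^*\leq\dim V<p$, Corollary \ref{SerCor} shows $\q_j/\q_{j+1}$ is semisimple as a $\pi(\h)$-module and has vanishing $H^1$ and $H^2$. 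Setting $\r_j=\r\cap\q_j$, the subquotients $\r_j/\r_{j+1}$ are $\pi(\h)$-submodules inheriting the same vanishings. I iteratively lift the identity $\pi(\h)\to\pi(\h)\subseteq\l$ to a Lie section $\sigma\colon\pi(\h)\to\h$ lying in $\l$: at stage $j$ the $H^2$-vanishing produces a lift through $0\to\r_j/\r_{j+1}\to\h/\r_{j+1}\to\h/\r_j\to 0$, and the $H^1$-vanishing allows me to adjust this lift (by a coboundary, which corresponds to conjugation by an element of $\exp(\r_j)$ and so stays inside $\h$) to make it sit inside $\l$ modulo $\q_{j+1}$. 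When the filtration is exhausted this yields $\s:=\sigma(\pi(\h))\subseteq\h\cap\l$ with $\h=\s\ltimes\r$. Finally, since $\s\cong\pi(\h)$ is strongly $p$-reductive, it is a direct sum of a central torus and a semisimple ideal; as $\s\subseteq\l$ preserves $V=\bigoplus W_i$ with each $W_i$ irreducible for $\s$, it acts completely reducibly on $V$.

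The hardest step will be simultaneously arranging at each stage that the lift remains in $\h$ and also falls into $\l$ modulo $\q_{j+1}$, which really requires both the $H^1$- and $H^2$-vanishing. The exceptional $\sl_2$- and $\sl_3$-cases of Theorem \ref{h1h2vanish} (requiring an $L(p-2)$ or $L(p-3,0)$ composition factor of $\r_j/\r_{j+1}$) are excluded here because such a factor would force $\dim W_i+\dim W_{i-j}\geq p$, violating the hypothesis. The other concern is the Witt-algebra case of Strade's Theorem \ref{Strade}, which can occur when $\dim V=p-1$: as the remark after Corollary \ref{SerCor} points out, $W_1$ has non-vanishing $H^1$ on its natural module, so this case needs a direct verification that the specific subquotients $W_i\otimes W_{i-j}^*$ arising avoid the problematic $(p-1)$-dimensional module, using that $\sum\dim W_i<p$ sharply limits the possibilities.
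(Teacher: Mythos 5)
Your overall plan—take $\p$ to be the stabiliser of an $\h$-composition series of $V$, observe $\r=\h\cap\q$, identify $\pi(\h)$ as strongly $p$-reductive by Lemma~\ref{predimpstrong}, then build a section across the central filtration of $\q$ using the cohomological vanishings of Corollary~\ref{SerCor}—is in the right spirit and close to the paper's. The setup (first paragraph) and the claim $\r=\h\cap\q$ are correct and cleanly argued.

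There is, however, a genuine gap in the lifting step. Corollary~\ref{SerCor} gives vanishing of $\opH^1$ and $\opH^2$ only for a \emph{semisimple} $\g$, so it applies to $\h_s$, the semisimple part of $\pi(\h)=\h_s\oplus\z$, but not to $\pi(\h)$ itself. Since $\z$ is a (possibly nonzero) central torus, the ordinary Lie-algebra cohomology of $\pi(\h)$ need not vanish: the $\z$-trivial weight spaces of $\r_j/\r_{j+1}$ (which can occur whenever $W_a\cong W_b$ for two composition factors of $V$) contribute $\opH^*(\z,k)\neq 0$ via the K\"unneth formula, e.g. $\opH^1(\z,k)=\z^*$ and $\opH^2(\z,k)=\wedge^2\z^*$. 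So your proposed obstruction groups $\opH^2(\pi(\h),\r_j/\r_{j+1})$ and $\opH^1(\pi(\h),\r_j/\r_{j+1})$ are not automatically zero, and you cannot split $0\to\r\to\h\to\pi(\h)\to 0$ directly this way. The paper avoids this precisely by first lifting $\z$ using the $p$-map (\cite[Lemma 2.4.4(2)]{SF88}), which needs no cohomology, then lifting $\h_s$ through the filtration using only the $\opH^2(\h_s,-)$-vanishing, and finally performing a genuinely delicate modification (replacing $\h_s'$ by $\h_s''\subseteq\l+\c_\q(\z)$) to make the sum with $\z$ a subalgebra. That last step has no analogue in your write-up, and is exactly where the real work is.

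Two further remarks. First, your use of $\opH^1$-vanishing to force the section into $\l$ at every stage is an overcomplication: it suffices to produce any complement $\s\subseteq\h$ to $\r$ and then note, once $\s$ is known to act completely reducibly on $V$, that it lies in \emph{some} Levi subalgebra of $\p$, which you may then rename $\l$ (this is what the paper does). Second, your treatment of the Witt-algebra case is more involved than necessary: if $\h_s=W_1$ then by Theorem~\ref{Strade} this forces $\dim V=p-1$ and $W_1$ to act irreducibly on $V$, so $\p=\l=\gl(V)$, $\q=0$, and there is nothing to lift—no "direct verification of the subquotients" is needed.
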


\begin{proof} As in the proof of Lemma \ref{predimpstrong}  we take a minimal parabolic subgroup $P=LQ$ so that its Lie algebra $\p=\l+\q$ contains $\h$ and so that the projection $\h_\l:=\pi(\h)$ of $\h$ to the Levi subalgebra $\l$ is strongly $p$-reductive and we may write $\h_\l=\h_s\oplus\z$ where $\h_s$ is semisimple and $\z=Z(\h_\l)$.
We also have $\r \leq \q$, since $\h_\l$ is $p$-reductive.

Now by Theorem \ref{Strade}, either $\h_s=W_1$,
$\h = \h_\l$,
$\p=\l=\gl(V)$ and we are done;
or $\h_\l$ is isomorphic to a direct product of classical Lie algebras $\s_i$ and $\z$.  

We first lift $\z$ to $\h$.
Let $\pi':\h \rightarrow \z$ be the composition of $\pi$ with the projection onto $\z$.
By \cite[Lemma 2.4.4(2)]{SF88}, there is a torus $\z'\leq Z(\l)+\q$ so that $\h=\z'+\ker(\pi')$. Now since $\z'$ is a torus, it is linearly reductive, we may replace $\h$ by a conjugate by $Q$ so that $\z'\subseteq Z(\l)$. Let us rewrite $\z=\z'$ and identify $\z$ with its image in $\l$ under $\pi$. 

Next we construct a complement to $\r$ in $\h$.
Let $\pi'':\h\to\h_s$ be the composition of $\pi$ with the projection onto $\h_s$ and let $\h' \subseteq \h$ be 
a vector space complement to $\ker(\pi'')$.
Then $\r + \h' \leq \h$ is a subalgebra, and we have an exact sequence
\[ 0 \to \r \to \r + \h' \stackrel{\pi''}{\to} \h_s\to 0.\]

We show this sequence is split. By Lemma \ref{abslem}, the nilpotent radical $\q$ of $\l$ has a filtration $\q=\q_1\supseteq \q_2\supseteq\dots \supseteq\q_m=0$ with each $\q_i/\q_{i+1}$ having the structure of an $\l$-module $M_i\otimes N_i$ with $M_i$ and $N_i$ irreducible modules for the projections of $\h_\l$ to distinct factors of the Levi. Since $\dim M_i+\dim N_i<p$, we have by Corollary \ref{SerCor} that $M_i\otimes N_i$ is a direct sum of irreducible modules for $\h_s$ with $\opH^2(\h_s,M_i\otimes N_i)=0$. By intersecting with $\r$, we get a filtration $\r=\r_1\supseteq \r_2\supseteq\dots \supseteq\r_m=0$ by $\h_s$-modules so that each $\r_i/\r_{i+1}$ is a submodule of $M_i\otimes N_i$, hence also a semisimple module with $\opH^2(\h_s,\r_i/\r_{i+1})=0$. By an obvious induction on the length $m$ of the filtration $\{\r_i\}$ we now see that the sequence \[0\to\r\to\r+\h'\to\h_s\to 0\] is split. Thus we may set $\h_s'$ a complement to $\r$ in $\h'+\r$.

We would like to set $\s =\h_s'+\z$, however this vector space may not be a subalgebra of $\g$.
Write $\q=\c_\q(\z)+[\q,\z]$. (This can be done, for instance by \cite[Lemma 2.4.4(1)]{SF88}.)
Any element $h$ of $\h_s'$ can be written as $h_1+q_1+q_2$
for $h_1\in\l$, $q_1$ in $\c_\q(\z)$ and $q_2\in[\q,\z]$.
As $\h$ is stable under $\ad\z$, with $\z$ centralising $h_1$ and $q_1$, we conclude that $q_2\in\h$. Thus we have the element $h'=h_1+q_1\in\h$.
Thus we may form the subspace $\h_s'' \leq \h$ with $\h_s''\leq\l+\c_\q(\z)$.

Using that $\h_s' \leq \h$ is a subalgebra, that $\c_\q(\z)$ is
$\l=\c_{\gl(V)}(\z)$-invariant and that $[\q,\z]$ is an ideal in $\q$, one checks that $\h_s''$ is indeed a subalgebra,\footnote{The calculation is as follows: if $h_1+q_1+q_2$ and $h_1'+q_1'+q_2'$ are two elements of $\h_s'$ then 
\[[h_1+q_1+q_2, h_1'+q_1'+q_2']=\underbrace{[h_1,h_2]}_{\in\h_\l}+\underbrace{[h_1,q_1']+[q_1,h_1']+[q_1,q_1']}_{\in\c_\q(\z)} + x,\] where $x\in [\q,\z]$ by the Jacobi identity. But projecting to $\h_s''$ one simply deletes $q_2$, $q_2'$ and $x$ to get the analagous calculation.} with $\h_s''$ also a complement to $\r$ in $\h_\s'+\r$. Now we have guaranteed that $\s=\h_s''+\z$ is a subalgebra of $\h$,
a complement to $\r$ in $\h$.

Now, by Corollary \ref{liealgcompred}, $\h_s''$ acts completely reducibly. Also, since $\z$ is a torus, $\z$ is linearly reductive on restricted representations, hence also acts completely reducibly. Thus $\s$ is completely reducible on $V$.
In particular, we may replace $\l$ with a Levi subalgebra of $\p$ that contains $\s$,
which finishes the proof.
\end{proof}

\section{Proof of Theorem B(i)}

\begin{proof}We first prove the statement in the case that $G=\GL(V)$, so we assume $p>\dim V+1$.
By assumption, $\h$ is a restricted subalgebra of $\g$.

Let $\n=\n_\g(\h)$. By Theorem \ref{splittingLie} we may decompose both $\n$ and $\h$.
Let $\n=\n_\l+\n_\q\leq \p=\l+\q$ with $\n_\l\leq \l$ and $\n_\q\leq\q$, with $\n_\l=\n_s+\z$, $\z$ a torus and $\n_s$ is by Theorem \ref{Strade} isomorphic to a direct product of classical Lie algebras acting completely reducibly on $V$; also set $\h_\q=\h\cap\q$ and $\h_\l=\pi(\h)$ the projection to $\l$. Since $\n$ is generated by nilpotent elements we have $\z=0$ and $\h_\l=\h_s$.
Since the complement to $\h_\q$ in $\h$ obtained by Theorem \ref{splittingLie}
is completely reducible on $V$ and hence conjugate to a subalgebra of $\l$, we may
assume that $\h = \h_\q + \h_\l$ is this splitting. Furthermore, $\h_\l\leq\n_\l$ is an ideal of a direct product of simple subalgebras, hence is a direct product of some subset of those simples. 

Since $V$ has dimension less than $p$, $V|_{\n_\l}$ is a restricted module for $\n_\l$. Hence there is a connected algebraic group $N_\l$ with $\Lie N_\l\cong \n_\l$, $N_\l\leq \GL(V)$ and $V|_{\Lie(N_\l)}\cong V|_{\n_\l}$. Hence, replacing $N_\l$ by a conjugate if necessary, we have $\Lie(N_\l)=\n_\l$. Moreover if $L$ is a Levi subgroup of $\GL(V)$ chosen so that $\Lie(L)=\l$ then 
we may produce $N_\l\leq L$. Clearly $N_\l$ normalises any direct factor of $\n_\l$, in particular, $\h_\l$. 

Now, since the $\l$-composition factors of $\q$ are all of the form $W_1\otimes W_2$ for $\dim W_1+\dim W_2<p$ and $W_1$, $W_2$ irreducible for $\n_\s$, \cite[Prop.\ 7]{Ser94s} implies that $\q$ is a restricted semisimple module for $N_\l$ and $\n_\l$. Since $\n_\l$ normalises $\h_\q=\h\cap \q$, this space also appears as an $N_\l$-submodule
in $\q$, hence $N_\l$ normalises $\h_\q$.

It remains to construct a unipotent algebraic group $N_\q$ such that $\Lie N_\q=\n_\q$ with $N_\q$ normalising $\h$. For this we use Corollary \ref{expParab}. Let $N_\q=\overline{\langle \exp x:x\in\n_\q\rangle}$. Then $N_\q$ is a closed subgroup, which by Corollary \ref{expParab}  consists of elements normalising $\h$.
By Lemma \ref{genbynilp}, $\n_\q\leq\Lie(N_\q)$.

Let $N$ be the smooth algebraic group given by $N=\langle N_\l,N_\q\rangle$. We have shown that $N$ normalises $\h$ and that $\n\subseteq \Lie N$. Since also $\Lie N\subseteq \n$ we are done for the case $G=\GL(V)$.

To prove the remaining part, we appeal to Proposition E again. 

Let $G$ be a simple algebraic group with minimal dimensional representation $V$. Then since $p>\dim V$, $(\GL(V),G)$ is a reductive pair.
Indeed, the assumption on $p$ guarantees that the trace form
associated to $V$ is non-zero, see \cite[Fact 4.4]{Garibaldi}.
This implies the reductive pair property (cf.\ the proof
of \cite[Prop.\ 8.1]{Garibaldi}).
The theorem now follows by invoking Proposition E.
\end{proof}

\section{Examples}\label{sec:examples}

In this section we mainly collect, in a number of statements, examples which demonstrate the tightness of some of our bounds. First let us just point out that there are some rather general situations in which smooth normalisers can be found.

\begin{example}[\!\!{\cite[Theorem B]{McT09}}]Suppose $G$ is a quasi-split reductive group over a field $k$ of very good characteristic. Then the normaliser $N=N_G(C)$ of the centraliser $C=C_G(e)$ of a regular nilpotent element $e$ of $\g=\Lie(G)$ is smooth.\end{example}

\begin{example}[\!\!{\cite[Proof of Lem.~3.1]{HSMax}}]Suppose $G$ is reductive over an algebraically closed field $k$ of very good characteristic and $e$ is a nilpotent element of $\g=\Lie(G)$, then the normaliser $N_G(\la e\ra)$ of the $1$-space $\la e\ra$ of $\g$ is smooth.\end{example}

We will first give the promised example discussed after the statement of Theorem A. For this, we will need a lemma.

%

\begin{lemma}\label{NBhSmoothLift} Let $B=TU$ be a Borel subgroup of a reductive algebraic group $G$ containing a maximal torus $T$ with unipotent radical $U$. Suppose $N_B(\h)$ is smooth and $s\in\t=\Lie(T)$ an element normalising a subspace $\h$ of $\u=\Lie(U)$. Then $\la s\ra=\Lie(\chi(\Gm))$ for a cocharacter $\chi:\Gm\to N_B(\h)$, such that $\chi(\Gm)$ is conjugate by an element of $C_{U}(s)$ to a cocharacter with image in $T$.\end{lemma}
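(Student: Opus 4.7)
The plan is to exploit the smoothness of $N_B(\h)$ to lift $\la s\ra$ first to a one-dimensional torus inside $N_B(\h)$, then use standard conjugacy of tori in the solvable group $B$ to move that torus into $T$, and finally observe that the conjugating element is automatically forced to centralise $s$.

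To begin, since $s \in \t$ the line $\la s\ra$ is a one-dimensional toral restricted subalgebra of $\Lie(N_B(\h)) = \n_\b(\h)$ (the equality using smoothness). Because $N_B(\h)$ is smooth, a standard result guarantees that every toral restricted subalgebra of its Lie algebra is the Lie algebra of a (necessarily unique) sub-torus; applied to $\la s\ra$ this produces a one-dimensional torus $C \leq N_B(\h)$ with $\Lie(C) = \la s\ra$. Any isomorphism $\chi:\Gm \xrightarrow{\sim} C$ then provides a cocharacter $\chi:\Gm \to N_B(\h)$ satisfying $\Lie(\chi(\Gm)) = \la s\ra$, as required.

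Next I would move $C$ into $T$. Since $C$ is a sub-torus of the smooth connected solvable group $B = T \ltimes U$, the standard conjugacy of tori in such a group (extend $C$ to a maximal torus of $B$, then $U$-conjugate it onto $T$) produces some $u \in U$ with $uCu^{-1} \subseteq T$; the conjugate cocharacter $t \mapsto u\chi(t)u^{-1}$ then has image in $T$. The crux is to check that any such $u$ automatically lies in $C_U(s)$. For this, observe that $\u$ is an $\Ad(U)$-stable ideal of $\b = \t \oplus \u$, and because $B/U \cong T$ is abelian the induced action of $U$ on $\b/\u \cong \t$ is trivial; hence $\Ad(u)(s) - s \in \u$ for every $u \in U$. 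On the other hand $uCu^{-1} \subseteq T$ forces $\Ad(u)(s) \in \Lie(uCu^{-1}) \subseteq \t$. Combining the two facts yields $\Ad(u)(s) = s$, i.e.\ $u \in C_U(s)$.

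The main potential obstacle is the very first step, namely lifting $\la s\ra$ to an actual torus $C \leq N_B(\h)$; this uses smoothness crucially, and can be handled either by quoting the general statement that in a smooth affine group every toral element of the Lie algebra sits in the Lie algebra of some (maximal) torus, or by directly invoking the semi-direct product decomposition $N_B(\h)^0 = T' \ltimes R_u(N_B(\h)^0)$ of the smooth connected solvable group $N_B(\h)^0$ and showing that the semisimple element $s$ lies in $\Lie(T')$ after a suitable $R_u$-conjugation.
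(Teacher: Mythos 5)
Your argument is correct and follows essentially the same route as the paper: the paper lifts $\la s\ra$ by combining Humphreys' Thm.\ 13.3 (a maximal toral subalgebra of the Lie algebra of a smooth group is the Lie algebra of a maximal torus) with Dieudonn\'e's result to extract the one-dimensional subtorus, then conjugates into $T$ by some $u\in U$ and deduces $u\in C_U(s)$ from the $B$-equivariance of the projection $B\to T$, which is the same observation as your $\t\cap\u=0$ finish. One small slip worth flagging: the subtorus with prescribed Lie algebra need \emph{not} be unique in characteristic $p$ (e.g.\ $\{(t,1)\}$ and $\{(t,t^p)\}$ in $\Gm\times\Gm$ have the same Lie algebra), but only existence is used so this does not affect the argument.
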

\begin{proof} Since $N_B(\h)$ is smooth, we may, 
by \cite[Thm.\ 13.3]{Hum67}, write any maximal torus $\s$ of $\n_\b(\h)$ as $\Lie(S)$ for $S$ a maximal torus of $N_B(\h)$. By \cite[Prop.\ 2]{Dieu:1953}, for any semisimple element $s\in \s$ we may write $\la s\ra=\Lie(S_1)$ for $S_1\subseteq S$. Defining an appropriate isomorphism $\Gm\to S_1$, we may even write $s=\frac{\mathrm{d}}{\mathrm{d}t}\big|_{t=1} \chi(t)$ for $\chi$ a cocharacter of $N_B(\h)$.

As the maximal tori of $B$ are conjugate by elements of $U$, we have that $S$ is conjugate to its projection to $T$, say via $u\in U$; in particular, $u\chi(t)u^{-1}\in T$. Since projection to $T$ is $B$-equivariant, we have on differentiating, that $\frac{\mathrm{d}}{\mathrm{d}t}\big|_{t=1}(u\chi(t)u^{-1})=s$, so that $usu^{-1}=s$, i.e.\ that $u\in C_U(s)$.\end{proof}

\begin{example}\label{fibonaccimotherfucker}
Let $n\geq 4$. This example depends on three fixed parameters $\lambda_1,\lambda_2,\lambda_3$ together with variables $\{a_i\}_{1\leq i\leq n}$, $\{b_i\}_{1\leq i\leq n-1}$, $c$, $d$, and $e$, each taking values in $k=\bar\F_p$.

Let us define the following matrices:


\[A:=\left(\begin{array}{cccccccccc}
0 & a_1 & a_2 &*&*&*&\dots&*&*&*\\
&0&a_1&\beta_2&*&*&\dots&*&*&* \\
&&0&a_2&\beta_3&*&\dots&*&*&* \\
&&&0&a_3&\beta_4&\ddots&\vdots&\vdots&\vdots\\
&&&&0 &a_4&\ddots&*&*&*\\
&&&&&0 &\ddots&\beta_{n-2}& c &e+\lambda_1 \beta_{n-2}\\
&&&&&&\ddots &a_{n-2}&\beta_{n-1}&(1+\lambda_1) a_{n-2}+d\\
&&&&&&&0&a_{n-1}&b_{n-1}\\
&&&&&&&&0&a_n\\
&&&&&&&&&0\end{array}\right),\]

with $\beta_i = a_{i+1} + b_{i-1}$ for $i=2,\dots,n-1$,

\begin{align*}B&:=\left(\begin{array}{cccccccccc}
0&a_1&b_1&*&\dots&*&\\
&0&a_2&b_2&\ddots&\vdots\\
&&0&a_3 & \ddots&*\\
&&&0&\ddots&b_{n-1}\\
&&&&\ddots&a_n&\\
&&&&&0\\
\end{array}\right),\\
\\
C&:=\left(\begin{array}{ccccccccc}
0 &a_{n-3}&a_{n-1}+b_{n-3}& c &e+\lambda_2 (a_{n-1}+b_{n-3})\\
&0 &a_{n-2}&a_n+b_{n-2}&(1+\lambda_2)a_{n-2}+d\\
&&0&a_{n-1}&b_{n-1}\\
&&&0&a_n\\
&&&&0\end{array}\right),\\
\\
D&:= \left(\begin{array}{cccccc}
0 &a_{n-2}&a_{n}+b_{n-2}& d + \lambda_3 a_{n-2}\\
&0 &a_{n-1}&b_{n-1}\\
&&0&a_{n}\\
&&&0\end{array}\right)\end{align*}

Then the reader may check that for each choice of $\lambda_1$, $\lambda_2$ and $\lambda_3$, the following set defines a subalgebra $\h$ of the strictly upper triangular matrices:
\[\left\{\left(\begin{array}{cccc}A&*&*&*\\
0&B&*&*\\
0&0&C&*\\
0&0&0&D\end{array}\right):a_i\in k, b_j\in k, c,d,e\in k\right\}.\]

Let $F_i$ denote the $i$th Fibonacci number, so that $F_0=F_1=1$ and $F_2=2$ and suppose that $r$ is chosen so that $F_{r+1}=p$ is the prime characteristic of $k$, and let us suppose that $N_G(\h)$ is smooth. Since every entry of the superdiagonal is non-zero for some element in $\h$, it is easy to check that $N_G(\h)\subseteq B$. Thus $N_G(\h)=N_B(\h)$ and we may employ Lemma \ref{NBhSmoothLift}.

Suppose $s=\diag(s_1,\dots,s_{2n+12})$ is an arbitrary element of the diagonal torus $\t=\Lie(T)$. Then one can calculate the dimension of $\n_\t(\h)$ by enumerating the linear conditions amongst the $t_i$ necessary to normalise $\h$. For example, setting all indeterminates in a general matrix of $\h$ to be zero, except for $a_1=1$ gives a matrix $M$, which spans a $1$-space $\la M\ra$ of $\h$. One can see by inspection that $s$ will normalise $\h$ only if it normalises $\la M\ra$. However, calculating $[s,M]$, we see that to normalise $\la M\ra$ implies the following condition must hold: \[s_1-s_2=s_2-s_3=s_{2n+4}-s_{2n+5}.\] Repeating over other $1$-spaces leads to a collection of relations which can be expressed by a system of linear equations $R\mathbf s=0$ for some matrix $R$ and the vector $\mathbf s=(s_1,\dots,s_{2n+12})$. The kernel of $R$ modulo $p$ then determines the dimension of $\n_\t(\h)$. To determine the dimension of $N_T(\h)$, one searches for cocharacters $\chi(t)=\diag(t^{k_1},t^{k_2},\dots,t^{k_{2n+12}})$ 
which normalise $\h$ by conjugation. This leads to an identical set of relations on the entries of the vector $\mathbf k=(k_1,\dots,k_{2n+12})$, so that the equation $R\mathbf k=0$ must be solved \emph{over the integers}. Then the dimension of $N_T(\h)$ is the nullity of $R$.

The nullities of $R$ over $\Z$ and over $\Z/p$ are identical if and only if $s$ can be lifted to a diagonal cocharacter $\chi(t)$ so that $d/dt|_{t=1}\chi(t)=s$. By explicit calculation of $R$ in our particular case, one sees its elementary divisors are $0^4,1^{2n+7},F_{r+1}$. Thus since $p=F_{r+1}$ the nullity of $R$ modulo $p$ is bigger than over $\Z$. Thus there is a toral element $s$, which cannot be lifted to a diagonal cocharacter. In our case, $\h$ has an obvious centraliser whose elements are: \[\diag(\underbrace{s_1,\dots,s_1}_{r+2},\underbrace{s_2,\dots,s_2}_{r+1},\underbrace{s_3,\dots,s_3}_{5},\underbrace{s_4,\dots,s_4}_{4})\]
which accounts also for the four-dimensional kernel over the integers. 

One also checks that the subalgebra $\h$ is normalised by the toral element
\begin{align*}
s:=&\diag(1,2,3,5,8,\dots,F_r,F_{r+1},F_{r+2})\\
&\oplus\diag(F_2+4=6,F_3+4=7,\dots,F_r+4,F_{r+1}+4,F_{r+2}+4=F_r+4)\\
&\oplus\diag(F_{r-2}+1,F_{r-1}+1,F_r+1,F_{r+1}+1,F_{r+2}+1)\\
&\oplus\diag(F_{r-1}+2,F_r+2,F_{r+1}+2,F_{r+2}+2),\end{align*}
where for the direct sum $A\oplus B$ of two square matrices $A$ and $B$ we mean the block diagonal matrix having $A$ and $B$ on the diagonal. Note the congruence amongst the entries in $t$, $F_r=F_{r+2} \mod p$. Thus on each line, the last and pen-penultimate entries are the same modulo $p$. Furthermore, since this element does not centralise $\h$, it can have no lift to a diagonal cocharacter.

By assumption, $N_G(\h)=N_B(\h)$ is smooth. Thus $\la s\ra$ lifts to the image of a cocharacter $\chi'$ which, by Lemma \ref{NBhSmoothLift} is conjugate by $C_U(s)$ to a diagonal cocharacer $\chi$. Since by inspection, only five entries of $s$ are the same, $s$ is a regular toral element of $\n_\g(\h)$ and one checks \[C_U(s)=\la 1+te_{r,r+2}, 1+te_{2r+1,2r+3},1+te_{2r+6,2r+8},1+te_{2r+10,2r+12}:t\in k\ra.\]

The action of the second listed element in turn normalises $\h$ and the first, third and fourth simply change the values of $\lambda_1,\lambda_2,\lambda_3$. Thus if $g\in C_U(s)$ then one computes a new relation matrix $R'$ computing the normaliser $\n_\t(\h^g)$ which, by virtue of being independent of the values of $\lambda_i$, is identical to $R$. In particular, $\la s\ra$ still normalises $\h$ but there is still no lift to a diagonal cocharacter. This contradicts the conclusion of Lemma \ref{NBhSmoothLift}, hence $N_G(\h)$ is not smooth.
\end{example}

The next example will show the necessity of the bound in Theorem \ref{thmreductivenorms}. We first collect some miscellaneous auxiliary results in the following lemma. Recall that a subgroup $H$ of a connected reductive group $G$ is called  \emph{$G$-irreducible} if it is in no proper parabolic subgroup of $G$. 

\begin{lemma}\label{Girred}
Suppose $G$ is a connected reductive algebraic group and $H$ is a (possibly disconnected) closed reductive subgroup of $G$.

(i) We have $N_G(H)_\red^\circ=H^\circ C_G(H)_\red^\circ$. 

(ii) If $H$ is $G$-irreducible, then $C_G(H)_\red^\circ = \Rad(G)$, where $\Rad(G) = Z(G)^\circ_\red$.

(iii) Suppose $H \leq M \leq G$ is an intermediate reductive subgroup with $\Rad(G) \leq \Rad(M)$ and that $H$ is $G$-irreducible.
Write $Z(M)^\circ = \Rad(M) \times \mu_M$ for an infinitesimal subgroup scheme $\mu_M$.
Then either $\mu_M \leq Z(H)$ or $N_G(H)$ is non-smooth.
\end{lemma}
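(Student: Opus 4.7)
For (i), the inclusion $H^\circ C_G(H)_\red^\circ \subseteq N_G(H)_\red^\circ$ is immediate. For the reverse, I would use the conjugation morphism $\Int \colon N_G(H) \to \Aut(H)$ together with the identification $\Aut(H)^\circ = \Int(H^\circ)$ from \cite[XXIV, Cor.~1.7]{SGA3} (as invoked in the proof of Theorem~\ref{thmreductivenorms}). The image of the smooth connected group $N_G(H)_\red^\circ$ lies in $\Int(H^\circ)$, so for any $g \in N_G(H)_\red^\circ(k)$ there is $h \in H^\circ(k)$ with $\Int(g)|_H = \Int(h)$; then $gh^{-1} \in C_G(H)_\red^\circ(k)$ and $g \in H^\circ C_G(H)_\red^\circ$.

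For (ii), the inclusion $\Rad(G) \subseteq C_G(H)_\red^\circ$ is trivial; the plan is to show $C_G(H)_\red^\circ$ is a central torus. Any non-trivial unipotent element $u \in C_G(H)_\red^\circ$ would generate a closed connected unipotent subgroup $U = \overline{\langle u \rangle}^\circ$, and by Borel--Tits there is a proper parabolic $P \leq G$ with $N_G(U) \leq P$; but then $H \leq C_G(u) \leq N_G(U) \leq P$ contradicts $G$-irreducibility. Hence $C_G(H)_\red^\circ$ is a smooth connected group without non-trivial unipotent elements, hence a torus. Any non-central subtorus supplies a cocharacter $\lambda$ with $P(\lambda) \subsetneq G$ a proper parabolic containing $C_G(\lambda(\Gm)) \supseteq H$, again contradicting irreducibility. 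Therefore $C_G(H)_\red^\circ \leq Z(G)^\circ_\red = \Rad(G)$.

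For (iii), assume $N_G(H)$ is smooth; the goal is $\mu_M \leq Z(H)$, equivalently (since $\mu_M \leq Z(M) \leq C_G(H)$ is automatic) $\mu_M \leq H$. Parts (i), (ii) together with smoothness yield $N_G(H)^\circ = N_G(H)_\red^\circ = H^\circ \Rad(G)$. Applying (ii) to the smooth connected subgroup $\Rad(M) \leq C_G(H)$ gives $\Rad(M) \leq \Rad(G)$; together with the hypothesis this gives $\Rad(M) = \Rad(G)$, so $Z(M)^\circ = \Rad(G) \times \mu_M$ and in particular $\mu_M \cap \Rad(G) = 1$. Since $\mu_M$ is connected infinitesimal, $\mu_M \leq N_G(H)^\circ = H^\circ \Rad(G)$; combined with $\mu_M \leq C_G(H)$, the identity $(H^\circ \Rad(G)) \cap C_G(H) = Z(H)\Rad(G)$ (any $hr$ with $h \in H^\circ$, $r \in \Rad(G)$ centralises $H$ iff $h$ does, since $r$ is central in $G$) gives $\mu_M \leq Z(H)\Rad(G)$.

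The main obstacle is the final scheme-theoretic step, passing from $\mu_M \leq Z(H)\Rad(G)$ to $\mu_M \leq Z(H)$. I would exploit the canonical product decomposition $Z(M)^\circ = \Rad(G) \times \mu_M$, in which $\mu_M$ is characterised as the infinitesimal (multiplicative) factor. Under the inclusion $Z(M)^\circ \leq Z(H)\Rad(G)$ of commutative groups of multiplicative type, $\mu_M$'s trivial intersection with the smooth factor $\Rad(G)$ should force it to land in a complementary closed subgroup scheme inside $Z(H)$, giving $\mu_M \leq Z(H)$ via a compatible matching of the two product decompositions.
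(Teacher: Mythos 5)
Your route for parts (i) and (ii) is genuinely different from the paper's, which simply cites \cite[Lemmas 6.2 and 6.8]{Mar03}. Your direct arguments are sound: for (i), applying $\Int$ gives $gh^{-1}\in C_G(H)_\red(k)$, hence $N_G(H)_\red^\circ\subseteq H^\circ C_G(H)_\red$, and passing to identity components gives the claim because $(H^\circ C_G(H)_\red)^\circ = H^\circ C_G(H)_\red^\circ$; for (ii), the Borel--Tits argument ruling out unipotents and non-central cocharacters in the smooth connected group $C_G(H)_\red^\circ$ is the standard proof. This is more transparent and self-contained than quoting Martin, at the cost of a few extra lines.

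For (iii) your structure matches the paper's, and the bookkeeping you add ($\Rad(M)\leq C_G(H)_\red^\circ = \Rad(G)$ by (ii), hence $\Rad(M)=\Rad(G)$ and $\mu_M\cap\Rad(G)=1$) is correct and is in fact needed implicitly. But the step you flag as the ``main obstacle'' is a genuine gap, and the sketch you offer does not close it. From $\mu_M\leq Z(H)\Rad(G)$ together with $\mu_M\cap\Rad(G)=1$ one cannot deduce $\mu_M\leq Z(H)$: a connected infinitesimal diagonalizable subgroup of a commutative product $B\cdot T$, with $T$ a torus, that meets $T$ trivially need not lie in $B$ --- the diagonal $\mu_p\hookrightarrow\Gm\times\Gm$ already shows this. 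Moreover there is no canonical infinitesimal direct complement $\mu_M$ to $\Rad(M)$ inside $Z(M)^\circ$ once $\Rad(M)\neq 1$ (the splitting of $X^*(Z(M)^\circ)$ into torsion plus free is a choice), so the appeal to a ``compatible matching of the two product decompositions'' is not available. In fairness, the paper's own proof is just as terse at this point (``This forces $\mu_M\leq H^\circ$''), and the implication is immediate only when $\Rad(G)=1$; that is also the only situation in which the lemma is actually invoked (in Examples~\ref{badchar} the group $G$ is always semisimple). So you have correctly located the crux of (iii), but the resolution as sketched fails for $\Rad(G)\neq 1$, just as the paper's one-line assertion leaves that case unjustified.
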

\begin{proof} (i) and (ii) follow from \cite[Lemmas 6.2 and 6.8]{Mar03}. 

For (iii), clearly $\mu_M \leq Z(M)\leq N_G(H)$.
If $N_G(H)$ is smooth, then by parts (i) and (ii) we have
$\mu_M \leq Z(M)^\circ \leq H^\circ C_G(H)_\red^\circ = H^\circ \Rad(G)$. 
This forces $\mu_M \leq H^\circ$.
\end{proof}

\begin{examples}\label{badchar} Lemma \ref{Girred} can be used to produce reductive subgroups $H$ of $G$ with non-smooth normalisers in bad characteristic. We use \cite[Example 4.1]{Her13}, in which the first author constructs examples of non-smooth centralisers for each
reductive group over a field of characteristic $p$ for which $p$ is not a very good prime for $G$.
All the subgroups constructed in \emph{loc.~cit.} are maximal rank reductive subgroups $M$ such that $C_G(M)=Z(M)$ is non-smooth,
hence $\mu_M \neq 1$ in Lemma \ref{Girred}(iii) above.
In many cases, we may take a further connected, reductive $G$-irreducible subgroup
$H$ of $M$ such that $p$ is pretty good for $H$. Thus its centre is in fact smooth, and being finite, cannot contain $\mu_M$.
Thus by Lemma \ref{Girred}(iii) the normaliser $N_G(H)$ is non-smooth.
Let us list some triples $(G,p,M,H)$ which work for this process. By $V_n$ we denote a natural module of dimension $n$ for the classical group $M$; by $\tilde M_1$ we mean a subgroup of type $M_1$ corresponding to short roots.

\begin{center}\begin{tabular}{c c c c}\hline
$G$ & $p$ & $M$ & $H$\\\hline
$G_2$ & $3$ & $A_2$ & $A_1\hookrightarrow M;\ V_3|_H=L(2)$\\
$F_4$ & $2$ & $A_1^4$ & $A_1\hookrightarrow M;\ x\mapsto (x,x^2,x^4,x^{16})$\\
$F_4$ & $3$ & $A_2\tilde A_2$ & $(A_1,A_1)\hookrightarrow M; (V_3,V_3)|_H=(L(2),L(2))$\\
$E_8$ & $5$ & $A_4^2$ & $A_1^2\hookrightarrow M;\ (V_5,V_5)|_H=(L(4),L(4))$\\
$\SL_p$ & $p>2$ & $\SL_p$ & $A_1\hookrightarrow M;\ V_p|_H=L(p-1)$.
\end{tabular}\end{center}
\end{examples}

\begin{remark}A complete list of conjugacy classes of simple $G$-irreducible subgroups of exceptional groups has been compiled by A.~Thomas, see \cite{Tho15} for the cases of rank at least $2$ and \cite{Tho16} for the rank $1$ case. For the $G_2$ example one may consult \cite[Theorem 1, Corollary 3]{SteG2}.\end{remark}

The next example shows the promised tightness of Theorem B(i) as stated in Remark \ref{thmbrems}(a).

\begin{lemma}\label{w1actingirred}Let $G=\GL(V)$ with $\dim V\geq p-1\geq 3$ and take any subspace $W\leq V$ with $\dim W=p-1$. Then if $W_1\leq\gl(W)$ is the first Witt algebra in its $p-1$-dimensional representation we have $N_G(W_1)$ is not smooth.\end{lemma}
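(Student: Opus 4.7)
The plan is to argue by contradiction via a dimension count. Assuming $N := N_G(W_1)$ is smooth, I derive incompatible upper and lower bounds on $\dim N$; the contradiction ultimately reflects the fact that $W_1$ is not the Lie algebra of an algebraic group for $p\geq 5$.

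First I reduce to the case $V=W$. Choosing a complement $V = W \oplus W'$ with the standard block-diagonal embedding $\gl(W)\hookrightarrow \gl(V)$, the subspace $W = W_1\cdot V$ is intrinsically determined, so $N\subseteq \Stab_G(W)$. A routine block-matrix calculation, exploiting that $W_1$ has no nonzero fixed vector in $W$, then identifies $N_{\GL(V)}(W_1) = N_{\GL(W)}(W_1)\times \GL(W')$ as schemes: the condition that $g=\bigl(\begin{smallmatrix}A&B\\0&D\end{smallmatrix}\bigr)$ conjugate $W_1$ into itself forces $W_1\cdot B=0$, hence $B=0$. Since $\GL(W')$ is smooth, it suffices to show $N_{\GL(W)}(W_1)$ is non-smooth, so I assume $V=W$.

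With $V=W$, conjugation defines a homomorphism $\phi:N\to\Aut(W_1)$ to the automorphism group scheme of $W_1$, whose kernel is $C_G(W_1)=\Gm$ by Schur's lemma (the $W_1$-module $W$ is irreducible). If $N$ is smooth then so is the image $H:=\phi(N)$, which therefore lies inside $\Aut(W_1)_\red$. The classical identification $\Aut(W_1)\cong \Aut(O_1)$ for $p\geq 5$, where $O_1=k[X]/(X^p)$, yields $\dim \Aut(W_1)_\red = p-1$, giving the upper bound
\[
\dim N \leq \dim C_G(W_1) + \dim\Aut(W_1)_\red \leq 1 + (p-1) = p.
\]
On the other hand, $\Lie(N) = \n_\g(W_1)$ contains both $W_1$ and the scalar line $k\cdot I_V$. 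Since the unique toral line of $W_1$ is $ke_0$ with $e_0=X\partial$ acting as $\diag(1,2,\dots,p-1)\neq c\cdot I_V$, one has $I_V\notin W_1$, and hence $\dim\n_\g(W_1)\geq p+1$. Smoothness of $N$ then forces $\dim N \geq p+1$, contradicting the upper bound.

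The main obstacle is verifying the classical structure of $\Aut(W_1)_\red$: one needs that for $p\geq 5$ every Lie-algebra automorphism of $W_1$ is induced (by conjugation) from an algebra automorphism of $O_1 = k[X]/(X^p)$, the latter being a smooth $(p-1)$-dimensional algebraic group of the form $\Gm\ltimes\Ga^{p-2}$. This is a well-known result in the Witt-algebra literature (see e.g.\ \cite{SF88}); once cited, the remainder of the argument is routine dimension bookkeeping.
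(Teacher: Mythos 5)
Your proof is correct, and it takes a genuinely different route from the paper's. The paper pins down the normaliser exactly: it invokes its Theorem~C (maximal non-semisimple subalgebras of classical Lie algebras are parabolic) plus a Schur's-lemma argument to show $\n_{\sl(W)}(W_1)=W_1$, so that $\n_{\gl(V)}(W_1)=W_1\oplus\z\oplus\gl(U)$, and then appeals to the classical fact that $W_1$ is not the Lie algebra of any algebraic group for $p>3$. Your argument instead performs a two-sided dimension count: the lower bound $\dim\n_\g(W_1)\geq p+1$ needs only that $W_1+kI_W\subseteq\n_\g(W_1)$ (no Theorem~C required), while the upper bound $\dim N\leq p$ comes from the exact sequence $1\to\Gm\to N\to\Aut(W_1)$ together with the classical identification $\Aut(W_1)_\red\cong\Aut(O_1)$, of dimension $p-1$ when $p\geq 5$. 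Both proofs thus lean on an external classical input about the Witt algebra --- the non-algebraicity of $W_1$ in the paper's case, the structure of $\Aut(W_1)$ in yours --- and these facts are closely related, but your route is more self-contained in that it bypasses Theorem~C and makes the numerical obstruction explicit. Two minor remarks: first, the observation $I_V\notin W_1$ is most simply seen from $W_1=[W_1,W_1]\subseteq\sl(W)$ while $\tr(I_W)=p-1\neq 0$ (or from $Z(W_1)=0$), which avoids discussing toral lines; second, the unipotent radical of $\Aut(O_1)$ is not isomorphic to $\Ga^{p-2}$ as a group (composition of $X\mapsto X+aX^2$ is not additive), though it is an affine space of dimension $p-2$, which is all your dimension count needs. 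It is also worth noting that the bound $\dim\Aut(W_1)_\red=p-1$ genuinely requires $p\geq 5$: for $p=3$ one has $W_1\cong\sl_2$ and $\Aut(W_1)\cong\mathrm{PGL}_2$ of dimension $3>p-1$, which the hypothesis $p-1\geq 3$ excludes.
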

\begin{proof}Since $W_1$ is irreducible on $W$, the normaliser $\n_{\gl(V)}(W_1)=\n_{\sl(W)}(W_1)\oplus \z\oplus \gl(U)$ for $V=W\oplus U$ and $\z$ the centre of $\gl(W)$. Moreover as $W_1$ is irreducible on $W$, so is $\n=\n_{\sl(W)}(W_1)$. By Theorem C, $\n$ is semisimple, hence, as  $W_1$ is simple, it must be a direct factor of $\n$, say $\n=W_1\oplus \h$. But now the action of $\ad\h$ on $W$ is a $W_1$-module homomorphism, hence is a scalar by Schur's lemma. Thus $\h\leq\z(\sl(W))=0$. It follows that $\n=W_1$.

Now $N_G(W_1)$ sends $W$ to another $W_1$-invariant subspace of the same dimension, hence $N_G(W_1)\leq \GL(W)\times\GL(U)$. Since $W_1$ is self-normalising, if $N_G(W_1)$ were smooth we would have $\Lie N_G(W_1)=\n_\g(W_1)=W_1\oplus \gl(U)$. This shows that $W_1$ is algebraic, a contradiction.\end{proof}

We now justify the remark after Theorem B that the bound in Theorem B(i) is tight for $G=\Sp_{2n}$.

\begin{lemma}\label{w1insp}The $p$-dimensional Witt algebra $W_1$ is a maximal subalgebra of $\sp_{p-1}$. Furthermore, its normaliser in any $\Sp_{p-1}$-Levi of $\Sp_{2n}$ with $2n\geq p-1$ is non-smooth.\end{lemma}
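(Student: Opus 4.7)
The plan is to first establish the maximality of $W_1$ inside $\sp_{p-1}$, then deduce from simplicity of $\sp_{p-1}$ that $W_1$ is self-normalising, and finally argue that this self-normalisation forces non-smoothness of the normaliser in any $\Sp_{p-1}$-Levi of $\Sp_{2n}$; otherwise $W_1$ would be realised as the Lie algebra of an algebraic group. Throughout we assume $p\geq 5$, since for $p=3$ one has $W_1\cong\sl_2=\sp_2$ and the statement is trivial.

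For the maximality, suppose $W_1\subsetneq \h\subseteq\sp_{p-1}$ with $\h\neq\sp_{p-1}$. Since $W_1$ acts irreducibly on the natural module $V=V_{p-1}$, so does $\h$, and as $\h$ preserves the non-degenerate alternating form on $V$, Corollary \ref{cor:nonss_red} forces $\h$ to be semisimple. Because $\dim V=p-1<p$, Theorem \ref{Strade} says that $\h$ is either a direct sum of algebraic Lie algebras or equal to $W_1$; the latter is excluded. So $\h=\Lie(H)$ for a connected semisimple subgroup $H\leq\Sp(V)$ acting irreducibly on $V$. Writing $\h$ as a direct sum of simple algebraic ideals, the simple algebra $W_1$ embeds into a single factor $\h_1=\Lie(H_1)$, which inherits irreducibility on $V$. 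Since $W_1$ is not algebraic, $W_1\subsetneq\h_1$ and so $\dim\h_1>p$. By Proposition \ref{mcn1} the representation is restricted with $p>h(H_1)$ and high weight in $C_\Z$. A direct check (e.g.\ using L\"ubeck's tables of small-dimensional irreducible representations of simple algebraic groups) shows that the only simple algebraic group with an irreducible symplectic representation of dimension $p-1$ and $\dim\Lie(H_1)>p$ is $\Sp_{p-1}$ itself; so $\h_1=\sp_{p-1}$, contradicting $\h\neq\sp_{p-1}$. Hence $W_1$ is maximal.

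For the normaliser statement, $\sp_{p-1}$ is simple of type $C_{(p-1)/2}$, so $W_1$ cannot be an ideal, and maximality forces $\n_{\sp_{p-1}}(W_1)=W_1$. An $\Sp_{p-1}$-Levi $L$ of $\Sp_{2n}$ has the form $L=\Sp_{p-1}\times L'$ for some reductive complement $L'$ (a product of $\GL$-factors) commuting with $\Sp_{p-1}$, whence $N_L(W_1)=N_{\Sp_{p-1}}(W_1)\times L'$. If $N_L(W_1)$ were smooth, so would be $N_{\Sp_{p-1}}(W_1)$, and its Lie algebra would equal $\n_{\sp_{p-1}}(W_1)=W_1$. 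This would exhibit $W_1$ as the Lie algebra of a smooth algebraic subgroup of $\Sp_{p-1}$, contradicting the non-algebraicity of $W_1$ for $p\geq 5$.

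The main obstacle is the classification step in the maximality argument: Theorem \ref{Strade} and Corollary \ref{cor:nonss_red} quickly reduce us to an algebraic semisimple intermediate $\h$, but ruling out a proper simple algebraic $H_1\leq\Sp(V)$ acting irreducibly on $V_{p-1}$ with $\dim\Lie(H_1)>p$ depends on invoking a classification of small-dimensional irreducible representations of simple groups in characteristic $p$.
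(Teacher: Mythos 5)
Your approach reaches the right conclusions and is broadly parallel to the paper's, but the route to maximality differs in a way worth noting. The paper exponentiates nilpotent generators of $W_1$ to obtain a connected \emph{subgroup} $W\leq\Sp_{p-1}$ acting irreducibly on $V$, then classifies the possible maximal connected reductive overgroups $M$ of $W$: not parabolic (irreducibility), hence reductive; necessarily simple; classical types are eliminated by comparing the rank to $\dim V$; exceptional types are eliminated by inspecting their minimal nontrivial representation dimensions $(7,25,26,27,56,248)$ against $p-1$. Your version stays at the Lie algebra level (applying Corollary~\ref{cor:nonss_red} and Theorem~\ref{Strade} to an intermediate $\h$) and then invokes L\"ubeck's tables to rule out a proper simple algebraic $H_1$ with an irreducible symplectic $(p-1)$-dimensional module and $\dim\Lie(H_1)>p$. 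That check is not wrong, but it is a genuine classification step that you've outsourced rather than carried out; by contrast, reducing first to \emph{maximal} subgroups (as the paper does) shrinks the list to a short, explicit casework. Either way the logic closes, and your non-smoothness argument via $\n_{\sp_{p-1}}(W_1)=W_1$ together with the non-algebraicity of $W_1$ is the same as the paper's.

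Two points to tighten if you want your version to stand on its own. First, the assertion that the simple algebra $W_1$ ``embeds into a single factor $\h_1$'' of the direct sum decomposition of $\h$ and that $\h=\h_1$ deserves a sentence: if $W_1$ projected nontrivially to two simple ideals, each projection would be an injection (simplicity) and $V$ would be a tensor product $V_1\otimes V_2$ with both factors affording a nontrivial, hence faithful, $W_1$-module of dimension $<p-1$, contradicting the fact that the smallest faithful $W_1$-module has dimension $p-1$; then irreducibility of $W_1\leq\h_1$ on $V$ forces the remaining factors to act by scalars and so to vanish. Second, you should be explicit (as you essentially are) that $p\geq 5$ is needed throughout: for $p=3$ one has $W_1\cong\sl_2=\sp_2$, so there is no content, and the non-algebraicity of $W_1$ fails for $p=3$ exactly because of this coincidence.
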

\begin{proof}Since $W_1$ stabilises the element \[X\wedge X^{p-1}+\frac{1}{2}X^2\wedge X^{p-2}+\frac{1}{3}X^3\wedge X^{p-3}+\dots+\frac{2}{p-1}X^{(p-1)/2}\wedge X^{(p+1)/2}\in {\bigwedge}^2 V\] we find that $W_1$ is contained in $\sp_{p-1}$, acting irreducibly on the $p-1$-dimensional module. Exponentiating 
a set of nilpotent generators of the Witt algebra as in the proof of
Theorem B(ii) gives an irreducible subgroup $W\leq \Sp_{p-1}$.
We claim that we must have equality.
From this claim it follows that $W_1$ is in no proper classical algebraic subalgebra of $\sp_{p-1}$, hence, by Theorem \ref{Strade}, is maximal.

To prove the claim, suppose $W$ is a proper subgroup of $G=\Sp_{p-1}$. Since $W$ is irreducible on the $p-1$-dimensional module, $W$ is it no parabolic of $G$. Thus it is in a connected reductive maximal subgroup $M$. We must have that $M$ is simple, or else $W_1$ would be in a parabolic of $G$. Now since the lowest dimensional non-trivial representation of $W_1$ is $p-1$, it follows that $M$ can have no lower-dimensional non-trivial representation. Since $p>2$, $\Sp_{p-1}$ has no simple maximal rank subgroup. All classical groups of rank lower than $\frac{p-1}{2}$ have natural modules of smaller dimension than $p-1$, so $M$ is of exceptional type. The lowest dimensional representations of the exceptional types are $6$ ($p=2$), $7$, $25$ ($p=3)$, $26$, $27$, $56$ and $248$. The only time one of these is $p-1$ is when $p=57$ and $M=E_7$. But if $p=57$ then $p>2h-2$ for $E_7$, then by Theorem B(ii) all maximal semisimple subalgebras are algebraic and so $W_1$ is not a subalgebra of $E_7$. This proves the claim, hence gives the first part of the lemma.

For the second, with $2n>p-1$, we have $W_1\leq \sp_{p-1}\oplus\sp_{2n-p+1}$ with $W_1$ sitting in the first factor. Then its normaliser is evidently $W_1\oplus \sp_{2n-p+1}$, however this is not algebraic for $p>3$, hence the normaliser $N_G(W_1)$ cannot be smooth. Thus we have shown that normalisers of all subalgebras of $\sp_{2n}$ are smooth only if $p>h+1$.
\end{proof}

If $n\geq p$ there is a more straightforward example of a (non-restricted) subalgebra of $\gl_n$ whose normaliser in $\GL_n$ is not smooth.

\begin{example} Let $\g=\gl_n$, take $J_p$ a Jordan block of size $p$ and take the abelian one-dimensional Lie algebra $\h=k(I_p+J_p)$ where $I_p$ is an identity block of size $p$. Then one can show with elementary matrix calculations that the normaliser of $N_G(\h)$ is non-smooth.\end{example}

The next example shows that even the normalisers of smooth groups are not smooth, even in $\GL(V)$, and even when $p$ is arbitrarily large.

\begin{lemma}\label{example:smoothUnipotent}Let $G=\GL(V)$ with $\dim V\geq 3$ and let $W$ be a $3$-dimensional subspace. Let $U\leq \GL(W)$ be defined as the smooth subgroup whose $k$-points are \[U(k)=\left\{\left[\begin{array}{c c c}1 & 0 & t\\0 & 1 & t^p\\ 0 & 0 & 1\end{array}\right]:t\in k\right\}.\]Write $V=W\oplus W'$ for some complement $W'$ to $W$ and set $H=U\oplus \GL(W')\leq \GL(V)$. Then $N_G(H)$ is non-smooth.\end{lemma}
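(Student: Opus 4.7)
The plan is to prove non-smoothness by exhibiting a strict inequality $\dim N_G(H) < \dim \Lie(N_G(H))$. The key observation is that $U$ is a one-dimensional smooth subgroup whose Lie algebra is $ke_{13}$, but the parametrisation $u(t) = I + te_{13} + t^p e_{23}$ involves a second matrix unit $e_{23}$ through the $p$-th power; this causes extra constraints on $N_G(H)$ at the infinitesimal level which are invisible at the group level.

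I would first reduce to showing that $N_{\GL(W)}(U)$ is non-smooth, by establishing $N_G(H) \cong N_{\GL(W)}(U) \times \GL(W')$ as group schemes. Any $g \in N_G(H)$ must preserve the decomposition $V = W \oplus W'$: when $\dim W' \geq 2$ because $g$ normalises $[H,H] = \SL(W')$, whose isotypic components on $V$ are $W$ (trivial) and $W'$ (natural); when $\dim W' = 1$ by a weight argument for the central torus $\Gm \leq H$; and trivially when $\dim W' = 0$. Once $g$ is block diagonal, its $W$-block normalises $U$ and its $W'$-block is arbitrary in $\GL(W')$; the same argument works functorially and gives the desired decomposition of the scheme-theoretic normalisers, so that $\dim \Lie(N_G(H)) - \dim N_G(H)$ equals the corresponding gap for $N_{\GL(W)}(U)$.

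For the core computation with $\GL(W) \cong \GL_3$: the Lie algebra $\n_\g(U)$ consists of $X \in \gl(W)$ with $(1 + \epsilon X) u(t) (1 - \epsilon X) \in U(k[\epsilon])$ for all $t$, equivalently $t[X,e_{13}] + t^p[X,e_{23}] \in ke_{13}$ for all $t \in k$, equivalently $[X,e_{13}]$ and $[X,e_{23}]$ both lie in $ke_{13}$. Direct computation yields the four conditions $x_{21}=x_{31}=x_{32}=0$ and $x_{22}=x_{33}$, giving a five-dimensional solution space. For the $k$-points: any $g \in N_{\GL(W)}(U)(k)$ conjugates $u(t)$ to $u(\phi(t))$ for an additive algebraic map $\phi:\Ga \to \Ga$, hence a $p$-polynomial $\phi(t) = \sum_i a_i t^{p^i}$. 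The entries of $gu(t)g^{-1}$ are polynomials of degree at most $p$ in $t$, so $\phi(t)^p$ (the $(2,3)$-entry of $u(\phi(t))$) must also have degree at most $p$, forcing $\phi(t) = a_0t$. Matching entries of $gu(t)g^{-1}$ against $u(a_0t)$ then forces $g_{12}=g_{21}=g_{31}=g_{32}=0$ together with the relation $g_{22} = g_{11}^pg_{33}^{1-p}$, a four-dimensional family.

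The dimension gap $5 > 4$ shows $N_{\GL(W)}(U)$, and hence $N_G(H)$, is non-smooth. The main obstacle is conceptual rather than computational: one must correctly extract the infinitesimal normalising condition from the parametrisation, recognising that although $\Lie(U) = ke_{13}$ is one-dimensional, the normaliser condition involves both $e_{13}$ and $e_{23}$ because $u(t) - I$ has a genuine $t^p$-component that is not visible in $\Lie(U)$. Once this is identified, both dimension calculations reduce to routine linear algebra.
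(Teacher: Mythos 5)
Your proof is correct and takes the same basic route as the paper: reduce to showing $N_{\GL(W)}(U)$ is non-smooth, then carry out an explicit computation in $\GL_3$. The difference is in the second step. The paper first observes that $N_{\GL(W)}(U)$ must normalise $\Lie(U)=ke_{13}$ and hence sit inside the upper-triangular Borel, and then exhibits a concrete non-reduced witness: intersecting with the unipotent radical of that Borel, the scheme-theoretic normaliser is cut out by $a^p=0$ on the $(1,2)$-entry, while the reduced subgroup is cut out by $a=0$. You instead compute $\dim\Lie(N_{\GL(W)}(U))=5$ (from the condition $[X,e_{13}],[X,e_{23}]\in ke_{13}$, giving $x_{21}=x_{31}=x_{32}=0$ and $x_{22}=x_{33}$) and $\dim N_{\GL(W)}(U)_\red=4$ (from $g_{12}=g_{21}=g_{31}=g_{32}=0$ and $g_{22}=g_{11}^p g_{33}^{1-p}$), and conclude from $5>4$. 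These are two presentations of the same phenomenon: the dimension gap of $1$ is exactly the $e_{12}$-direction that the paper's $\mu_p$-like witness lives in. Your version has the merit of being self-contained and making the discrepancy numerically explicit; the paper's version pinpoints structurally \emph{where} the non-smoothness occurs. Your fuller justification of the first step, that any element of $N_G(H)$ preserves the splitting $V=W\oplus W'$ (via the isotypic decomposition for $[H,H]=\SL(W')$ when $\dim W'\geq 2$, a weight argument for the central torus when $\dim W'=1$), correctly fills in what the paper compresses into the phrase ``from the reductivity of $\GL(W')$.''
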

\begin{proof} From the reductivity of $\GL(W')$ it follows that $N_G(H)=N_{\GL(W)}(U)\oplus \GL(W')$ so it suffices to show that $N_{\GL(W)}(U)$ is non-smooth. This is a routine calculation. For example, if $x$ is an element of a $k$-algebra $A$, with $x^p=0$ then one checks that the matrix \[\left[\begin{array}{c c c}1 & x & 0\\0 & 1 & 0\\ 0 & 0 & 1\end{array}\right]\in N_{\GL(W)}(U)(A).\] Now, the normaliser of $U$ of course normalises $\Lie(U)$. Since \[\Lie(U)=k\left[\begin{array}{c c c}0 & 0 & 1\\0 & 0 & 0\\ 0 & 0 & 0\end{array}\right],\] the normaliser of $\Lie(U)$ is the product of the centraliser of a certain (nilpotent) element and the image of a cocharacter associated with that element. In particular, the normaliser of $\Lie(U)$ is contained in the upper triangular Borel subgroup.

Write $V$ for the unipotent radical of that Borel subgroup, so $V$ is $3$-dimensional; a typical element has the form \[\left[\begin{array}{c c c}1 & a & b\\0 & 1 & c\\ 0 & 0 & 1\end{array}\right].\] In fact, the condition $a^p = 0$ defines the scheme-theoretic normaliser in $V$ of $U$, and the condition $a = 0$ defines the corresponding smooth subgoup of $V$ whose $k$-points form the group-theoretic normaliser of $U(k)$ in $V(k)$. The lemma follows.\end{proof}

Now we show that normalisers of height two or more subgroup schemes are not smooth.

\begin{example} Let $G$ be any connected reductive algebraic group over an algebraically closed field $k$ of characteristic $p>2$ and set $F:G\to G$ to be the Frobenius endomorphism. Let $B=TU$ be a Borel subgroup of $G$ with $T$ an $F$-stable maximal torus, and let $U$ the non-trivial unipotent radical. Let $T_r$ be the kernel in $T$ of $F^r$ and $U_1$ the kernel in $U$ of $F$. Finally set  $H=T_r\ltimes U_1$. Then $N_G(H)=T\ltimes U_1$, hence is not smooth.\end{example}

The next example shows that if $p=\dim V$, then the normaliser of a smooth connected solvable non-diagonalisable
algebraic subgroup of $\GL(V)$ can even be irreducible on $V$,
thus \emph{a fortiori} it is not smooth.
This also gives an example for when $p=2$ and $\dim V=2$ that the normalisers in $\SL(V)$ and $\GL(V)$ of subalgebras of the respective Lie algebras are not smooth.

\begin{example}\label{w1pluso1example} By \cite[Lemma 3]{Ten87} the Lie algebra $W_1+O_1$ formed as the semidirect product of $W_1$ and $O_1$, where $O_1$ acts on itself by multiplication, is a maximal subalgebra of $\sl_p=\sl(k[X]/X^p)$.
We imitate the embedding of $O_1$ in $\gl_p$ by a solvable subgroup of $\GL_p$.
Define the height $\mathrm{ht}(\alpha)$ of a root $\alpha$ to be the sum of the coefficients of the simple roots. Let $U$ be the subgroup $\langle \prod_{\alpha\in R^-; \mathrm{ht}(\alpha)=i}x_{\alpha}\rangle_{1\leq i\leq p-1}$. By construction $U$ is connected and unipotent and one can show that $\dim U=p-1$ and that $\Lie H=O_1$,
where $H$ is the smooth solvable subgroup
$Z(\GL_p)U$. Now it can be shown that there is a subgroup scheme $W$ corresponding to $W_1$ in $\GL_p$ which normalises $H$ and for which $W\ltimes H$ is irreducible. It immediately follows that $N_G(H)$ cannot be smooth.\end{example}

Finally we show that if $p\leq 2n-1$ the normalisers in $\GL_n$ and $\SL_n$ of subspaces of their Lie algebras are not all smooth, even when these normalisers are generated by nilpotent elements, showing that the bound in Theorem B(ii) cannot be improved for general subspaces.

\begin{lemma} If $p<2n-1$, normalisers of subspaces of $\gl_n$ (or $\sl_n$) are not necessarily smooth.\end{lemma}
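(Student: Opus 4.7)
The plan is to exhibit a concrete counterexample using the first Witt algebra. For each prime $p \geq 5$, set $n := p - 1 \geq 4$; then $p < 2n - 1$ is equivalent to $p > 3$, which holds. Let $V = k[X]/(X^p)/k$ be the natural $(p-1)$-dimensional faithful irreducible representation of $W_1$, giving an embedding $W_1 \hookrightarrow \sl(V) = \sl_n$ (note the image lies in $\sl(V)$ since every basis element $X^i\partial$ is either strictly triangular or has trace $1+2+\dots+(p-1) \equiv 0 \pmod p$). We take $\h := W_1$, regarded simply as a subspace of $\sl_n$.

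The non-smoothness of $N_{\SL_n}(\h)$ is essentially the content of Lemma \ref{w1actingirred} applied to $\sl(V)$ rather than $\gl(V)$: the proof there shows $\n_{\sl(V)}(W_1) = W_1$, via Theorem C (to force semisimplicity of the normaliser) and Schur's lemma (to eliminate any complementary ideal). Since $W_1$ is not the Lie algebra of any algebraic group for $p > 3$ (recorded in \S2), the scheme $N_{\SL_n}(\h)$ cannot be smooth, as otherwise $W_1 = \Lie N_{\SL_n}(\h)$ would be the Lie algebra of a smooth algebraic subgroup.

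It remains to verify that $\n_\g(\h) = W_1$ is generated by its nilpotent elements. In the standard basis $\{X^i\partial : 0 \leq i \leq p - 1\}$, the only non-nilpotent element is $X\partial$, which acts on $V$ as the diagonal matrix $\diag(1, 2, \dots, p-1)$; all other $X^i\partial$ with $i \neq 1$ are strictly triangular and hence nilpotent. The bracket identity $[\partial, X^2\partial] = 2\,X\partial$ in $W_1$ (using the standard formula $[X^i\partial, X^j\partial] = (j-i)X^{i+j-1}\partial$, and $p > 2$) expresses the toral element $X\partial$ as a bracket of nilpotents, so $W_1 = \langle X^i\partial : i \neq 1 \rangle_{\mathrm{Lie}}$.

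Combining these, for each prime $p \geq 5$ with $n = p - 1$ we obtain a subspace $\h \subseteq \sl_n$ with $p < 2n - 1$, $\n_\g(\h)$ generated by nilpotent elements, and $N_G(\h)$ non-smooth, demonstrating that the bound $p > 2h - 2 = 2n - 2$ in Theorem B(ii) cannot be relaxed for general subspaces. The main subtlety is to work in $\sl_n$ rather than $\gl_n$: the centre $kI$ of $\gl_n$ would otherwise appear in the normaliser ($\n_{\gl(V)}(W_1) = W_1 \oplus kI$ by the proof of Lemma \ref{w1actingirred}) and this central summand is not a bracket of nilpotents, breaking the required hypothesis.
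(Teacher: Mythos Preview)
Your argument is correct as far as it goes: for each prime $p\geq 5$ you exhibit a subspace $W_1\subseteq\sl_{p-1}$ with non-smooth normaliser whose Lie-theoretic normaliser is generated by nilpotents. But your route differs from the paper's and does not establish what the lemma is there to establish.

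The purpose of this lemma (see the sentence introducing it) is to show that the bound $p>2h-2=2n-2$ in Theorem~B(ii) is \emph{sharp}. Your example has $n=p-1$, i.e.\ $p=n+1$, which lies well below the threshold $2n-1$; it only shows the bound cannot be weakened below roughly $h+1$. That is precisely the content of Lemma~\ref{w1actingirred}, already used to show tightness of Theorem~B(i) --- indeed, since $W_1$ is a restricted subalgebra, your example is covered by Theorem~B(i) and does not separate the two parts of Theorem~B. Your concluding sentence, that this ``demonstrat[es] that the bound $p>2h-2$ \dots\ cannot be relaxed'', is therefore not justified by the example you give.

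The paper's construction is genuinely different and hits the boundary. It takes $p=2n-3$, embeds $H=\SL_2$ irreducibly in $\GL_n$ via $V=L((p+1)/2)$, and analyses the $H$-module $\gl_n\cong V\otimes V^*$. A tilting summand $T(p+1)$ appears; on restriction from $H$ to $\h=\sl_2$ this module acquires an extra $\h$-submodule $M$ (a non-split extension with layers $L(1)$, $L(p-3)$) which is \emph{not} $H$-stable. Then $\h\subseteq\n_{\gl_n}(M)$ but $H\not\subseteq N_{\GL_n}(M)$, so the normaliser is non-smooth. Here $M$ is a mere subspace, not a subalgebra, and the failure comes from the gap between $\h$-submodules and $H$-submodules of $\gl_n$ --- a phenomenon that first occurs exactly at $p=2n-3$. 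This is what gives the genuine sharpness for Theorem~B(ii) that your Witt-algebra example cannot reach.
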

\begin{proof}Let $p=2n-3$ and let $\h=\sl_2=\Lie H$ with $H=\SL_2$ over a field $k$ of characteristic $p$. Then the action of $H$ on the simple module $L((p+1)/2)$ gives an (irreducible) embedding $H\to\GL_n$. Restricting the adjoint representation of $\gl_n$ on itself to $H$ gives a module \[L((p+1)/2)\otimes L((p+1)/2)^*\cong T(p+1)\oplus M,\] where $M$ is a direct sum of irreducibles for $H$ (and $\h$) and $T(p+1)$ is a tilting module, uniserial with successive composition factors $L(p-3)|L(p+1)|L(p-3)$. 

Now for the algebraic group $H=SL_2$ we have $L(p+1)\cong L(1)\otimes L(1)^{[1]}$ by Steinberg's tensor product formula. Restricting to $\h$, $L(p+1)$ is isomorphic to $L(1)\oplus L(1)$. Now it is easy to show the restriction map $\Ext_G^1(L(p+1),L(p-3))\to\Ext_\g^1(L(1),L(p-3))\oplus \Ext_\g^1(L(1),L(p-3))$ is injective. Hence $T(p+1)|_\g$ contains a submodule $M$ isomorphic to $L(1)/L(p-3)$.

Now, the Lie theoretic normaliser of $M$ contains $\h$ but the scheme-theoretic stabiliser does not contain $H$. It follows that the normaliser of this subspace is not smooth.

Indeed, as $\h$ acts irreducibly on the $n$-dimensional natural representation for $\gl_n$, it is in no parabolic of $\gl_n$ (or $\sl_n$). However, the set of $k$-points $N_H(M)(k)=N_{\GL_n}(M)(k)\cap H$ is in a parabolic of $H$, hence in a parabolic of $\GL_n$.\end{proof}

{\footnotesize
\bibliographystyle{amsalpha}

\begin{thebibliography}{BHMR11}

\bibitem[ABS90]{ABS90}
H.~Azad, M.~Barry, and G.~Seitz, \emph{On the structure of parabolic
  subgroups}, Comm. Algebra \textbf{18} (1990), no.~2, 551--562. \MR{MR1047327
  (91d:20048)}

\bibitem[BHMR11]{BHMR11}
M.~Bate, S.~Herpel, B.~Martin, and G.~R{{\"o}}hrle, \emph{{$G$}-complete
  reducibility and semisimple modules}, Bull. Lond. Math. Soc. \textbf{43}
  (2011), no.~6, 1069--1078. \MR{2861529 (2012k:20089)}

\bibitem[BNP02]{BNP02}
C.~P. Bendel, D.~K. Nakano, and C.~Pillen, \emph{Extensions for finite
  {C}hevalley groups. {II}}, Trans. Amer. Math. Soc. \textbf{354} (2002),
  no.~11, 4421--4454 (electronic). \MR{1926882 (2003k:20063)}

\bibitem[BNP04]{BNP04-Frob}
\bysame, \emph{Extensions for {F}robenius kernels}, J. Algebra \textbf{272}
  (2004), no.~2, 476--511. \MR{2028069 (2004m:20089)}

\bibitem[BNP07]{BNP07}
\bysame, \emph{Second cohomology groups for {F}robenius kernels and related
  structures}, Adv. Math. \textbf{209} (2007), no.~1, 162--197. \MR{2294220
  (2008c:20085)}

\bibitem[Bou05]{Bourb05}
N.~Bourbaki, \emph{Lie groups and {L}ie algebras. {C}hapters 7--9}, Elements of
  Mathematics (Berlin), Springer-Verlag, Berlin, 2005, Translated from the 1975
  and 1982 French originals by Andrew Pressley. \MR{2109105 (2005h:17001)}

\bibitem[BT65]{BT65}
A.~Borel and J.~Tits, \emph{Groupes r{\'e}ductifs}, Inst. Hautes {\'E}tudes
  Sci. Publ. Math. (1965), no.~27, 55--150. \MR{0207712 (34 \#7527)}

\bibitem[Del13]{Del13}
P.~Deligne, \emph{Semi-simplicit{\'e} de produits tensoriels en
  caract{\'e}ristique p}, Invent. Math. (2013), 1--25 (French).

\bibitem[DG70]{DG:1970}
M.~Demazure and P.~Gabriel, \emph{Groupes alg\'ebriques. {T}ome {I}:
  {G}\'eom\'etrie alg\'ebrique, g\'en\'eralit\'es, groupes commutatifs}, Masson
  \& Cie, \'Editeur, Paris, 1970, Avec un appendice {{\i}t Corps de classes
  local} par M. Hazewinkel.

\bibitem[DGd70]{SGA3}
M.~Demazure and A.~Grothendieck~(dirig.), \emph{{S}ch\'emas en groupes,
  {S}\'eminaire de {G}\'eom\'etrie {A}lg\'ebrique du {B}ois {M}arie 1962/64
  ({SGA} 3)}, (Lecture Notes in Math., \textbf{151--153}), Springer-Verlag,
  Berlin, 1970.

\bibitem[Die52]{Dieu:1953}
J.~Dieudonn{\'e}, \emph{Sur les groupes de {L}ie alg\'ebriques sur un corps de
  caract\'eristique {$p>0$}}, Rend. Circ. Mat. Palermo (2) \textbf{1} (1952),
  380--402 (1953). \MR{0055355 (14,1062a)}

\bibitem[Dzh92]{Dzhuma}
A.~S. Dzhumadil{$'$}daev, \emph{Cohomology and nonsplit extensions of modular
  {L}ie algebras}, Proceedings of the {I}nternational {C}onference on
  {A}lgebra, {P}art 2 ({N}ovosibirsk, 1989) (Providence, RI), Contemp. Math.,
  vol. 131, Amer. Math. Soc., 1992, pp.~31--43. \MR{1175819 (93f:17034)}

\bibitem[Gar09]{Garibaldi}
S.~Garibaldi, \emph{Vanishing of trace forms in low characteristics}, Algebra
  Number Theory \textbf{3} (2009), no.~5, 543--566, With an appendix by
  Alexander Premet. \MR{2578888 (2011d:20088)}

\bibitem[Gur99]{Gur99}
R.~M. Guralnick, \emph{Small representations are completely reducible}, J.
  Algebra \textbf{220} (1999), no.~2, 531--541. \MR{1717357 (2000m:20018)}

\bibitem[Her13]{Her13}
S.~Herpel, \emph{On the smoothness of centralizers in reductive groups}, Trans.
  Amer. Math. Soc. \textbf{365} (2013), no.~7, 3753--3774. \MR{3042602}

\bibitem[Hog82]{Hog82}
G.~M.~D. Hogeweij, \emph{Almost-classical {L}ie algebras. {I}, {II}}, Nederl.
  Akad. Wetensch. Indag. Math. \textbf{44} (1982), no.~4, 441--452, 453--460.
  \MR{683531 (84f:17007)}

\bibitem[HS16]{HSMax}
Sebastian Herpel and David~I. Stewart, \emph{Maximal subalgebras of Cartan type
  in the exceptional Lie algebras}, Selecta Math. (to appear).

\bibitem[Hum67]{Hum67}
J.~E. Humphreys, \emph{Algebraic groups and modular {L}ie algebras}, Memoirs of
  the American Mathematical Society, No. 71, American Mathematical Society,
  Providence, R.I., 1967. \MR{0217075 (36 \#169)}

\bibitem[Ibr12]{Ibr12}
S.~S. Ibraev, \emph{The second cohomology of simple modules over $\rm
  {Sp}_4(k)$}, Comm. Algebra 40 (2012), no. 3, 1122--1130. \MR{2899929}

\bibitem[Jan91]{Jan91}
J.~C. Jantzen, \emph{First cohomology groups for classical {L}ie algebras},
  Representation theory of finite groups and finite-dimensional algebras
  ({B}ielefeld, 1991), Progr. Math., vol.~95, Birkh{\"a}user, Basel, 1991,
  pp.~289--315. \MR{1112165 (92e:17024)}

\bibitem[Jan97]{Jan97}
\bysame, \emph{Low-dimensional representations of reductive groups are
  semisimple}, Algebraic groups and {L}ie groups, Austral. Math. Soc. Lect.
  Ser., vol.~9, Cambridge Univ. Press, Cambridge, 1997, pp.~255--266.
  \MR{1635685 (99g:20079)}

\bibitem[Jan03]{Jan03}
\bysame, \emph{Representations of algebraic groups}, second ed., Mathematical
  Surveys and Monographs, vol. 107, American Mathematical Society, Providence,
  RI, 2003. \MR{MR2015057 (2004h:20061)}

\bibitem[Kan79]{Kan79}
W.~M. Kantor, \emph{Classical groups from a nonclassical viewpoint}, Oxford
  University Mathematical Institute, Oxford, 1979. \MR{578539 (81h:20056)}

\bibitem[Mar03]{Mar03}
Benjamin M.~S. Martin, \emph{Reductive subgroups of reductive groups in nonzero
  characteristic}, J. Algebra \textbf{262} (2003), no.~2, 265--286. \MR{1971039
  (2004g:20066)}

\bibitem[McN02]{McN02}
G.~J. McNinch, \emph{The second cohomology of small irreducible modules for
  simple algebraic groups}, Pacific J. Math. \textbf{204} (2002), no.~2,
  459--472. \MR{1907901 (2003m:20062)}

\bibitem[Mil12]{JSM}
J.~S. Milne, \emph{Basic theory of affine group schemes}, available at
  http://www.jmilne.org/math/CourseNotes/AGS.pdf \textbf{v.1.00} (2012).

\bibitem[MT09]{McT09}
George~J. McNinch and Donna~M. Testerman, \emph{Nilpotent centralizers and
  {S}pringer isomorphisms}, J. Pure Appl. Algebra \textbf{213} (2009), no.~7,
  1346--1363. \MR{2497582 (2010c:20059)}

\bibitem[Pre95]{Premet95}
A.~Premet, \emph{Irreducible representations of {L}ie algebras of reductive
  groups and the {K}ac-{W}eisfeiler conjecture}, Invent. Math. \textbf{121}
  (1995), no.~1, 79--117. \MR{1345285 (96g:17007)}

\bibitem[Ric67]{Ric67}
R.~W. Richardson, Jr., \emph{Conjugacy classes in {L}ie algebras and algebraic
  groups}, Ann. of Math. (2) \textbf{86} (1967), 1--15. \MR{0217079 (36 \#173)}

\bibitem[Sei00]{Sei00}
Gary~M. Seitz, \emph{Unipotent elements, tilting modules, and saturation},
  Invent. Math. \textbf{141} (2000), no.~3, 467--502. \MR{1779618
  (2001j:20074)}

\bibitem[Ser94]{Ser94s}
J-P. Serre, \emph{Sur la semi-simplicit\'e des produits tensoriels de
  repr\'esentations de groupes}, Invent. Math. \textbf{116} (1994), no.~1-3,
  513--530.

\bibitem[Ser98]{Ser98}
\bysame, \emph{1998 {M}oursund lectures at the {U}niversity of {O}regon},
  http://arxiv.org/abs/math/0305257v1 (1998).

\bibitem[SF88]{SF88}
H.~Strade and R.~Farnsteiner, \emph{Modular {L}ie algebras and their
  representations}, Monographs and Textbooks in Pure and Applied Mathematics,
  vol. 116, Marcel Dekker Inc., New York, 1988. \MR{929682 (89h:17021)}

\bibitem[Ste10]{SteG2}
D.~I. Stewart, \emph{The reductive subgroups of {$G_2$}}, J. Group Theory
  \textbf{13} (2010), no.~1, 117--130. \MR{2604850 (2011c:20099)}

\bibitem[Ste12]{SteSL3}
\bysame, \emph{The second cohomology of simple {$SL_3$}-modules}, Comm. Algebra
  \textbf{40} (2012), no.~12, 4702--4716. \MR{2989676}

\bibitem[Str73]{Str73}
H.~Strade, \emph{Lie algebra representations of dimension {$p-1$}}, Proc. Amer.
  Math. Soc. \textbf{41} (1973), 419--424. \MR{0330247 (48 \#8585)}

\bibitem[Ten87]{Ten87}
O.~K. Ten, \emph{On nonsemisimple maximal subalgebras of {L}ie {$p$}-algebras
  of classical type}, Vestnik Moskov. Univ. Ser. I Mat. Mekh. (1987), no.~2,
  65--67, 103. \MR{884515 (88g:17017)}

\bibitem[Tho15]{Tho15}
A.~R. Thomas, \emph{Simple irreducible subgroups of exceptional algebraic
  groups}, J. Algebra \textbf{423} (2015), 190--238. \MR{3283715}

  \bibitem[Tho16]{Tho16}
\bysame, \emph{Irreducible ${A}_1$ subgroups of exceptional algebraic
  groups}, J. Algebra \textbf{447} (2016), 240--296. \MR{3427635}

\bibitem[Vas05]{Vas05}
A.~Vasiu, \emph{Normal, unipotent subgroup schemes of reductive groups}, C. R.
  Math. Acad. Sci. Paris \textbf{341} (2005), no.~2, 79--84. \MR{2153960
  (2006g:14076)}

\bibitem[Wri11]{Wri11}
C.~B. Wright, \emph{Second cohomology groups for algebraic groups and their
  {F}robenius kernels}, Journal of Algebra \textbf{330} (2011), no.~1, 60 --
  75.

\end{thebibliography}
\providecommand{\bysame}{\leavevmode\hbox to3em{\hrulefill}\thinspace}
\providecommand{\MR}{\relax\ifhmode\unskip\space\fi MR }
\providecommand{\MRhref}[2]{%
  \href{http://www.ams.org/mathscinet-getitem?mr=#1}{#2}
}
\providecommand{\href}[2]{#2}

\end{document}